\numberwithin{equation}{section}
\newtheorem{theorem}{Theorem}[section]
\newtheorem{lemma}[theorem]{Lemma}
\newtheorem{proposition}[theorem]{Proposition}
\theoremstyle{definition}
\newtheorem{example}[theorem]{Example}
\newtheorem*{acknowledgment}{Acknowledgments}
\theoremstyle{remark}
\newtheorem{remark}[theorem]{Remark}
\numberwithin{equation}{section}
\newcommand{\Z}{\mathbb{Z}}
\newcommand{\R}{\mathbb{R}}
\newcommand{\F}{\mathbb{F}}
\newcommand{\CFK}{{\rm CFK}}
\newcommand{\HFK}{{\rm HFK}}
\renewcommand{\u}{{\rm u}}
\newcommand{\Ord}{{\rm Ord}}
\renewcommand{\int}{{\rm int}}
\begin{document}

\title{The unoriented band unknotting numbers of torus knots}

\author{Keisuke Himeno
}

\address{Graduate School of Advanced Science and Engineering, Hiroshima University,
1-3-1 Kagamiyama, Higashi-hiroshima, 7398526, Japan}
\email{himeno-keisuke@hiroshima-u.ac.jp}

\begin{abstract}
The unoriented band unknotting number of a knot is the minimum number of oriented or non-oriented band surgeries that turn the knot into the unknot. Batson introduced a certain non-oriented band surgery for a torus knot. The minimum number of these operations required to turn a torus knot into the unknot is called the pinch number, and it can be easily calculated from the parameters of the torus knot. In this paper, we show that the unoriented band unknotting number and the pinch number coincide for torus knots. In the proof, we use the torsion order of the unoriented knot Floer homology.
\end{abstract}

\renewcommand{\thefootnote}{}
\footnote{2020 {\it Mathematics Subject Classification.} Primary 57K10, 57K18; Secondary 57R58.

{\it Key words and phrases.} band unknotting number, knot Floer homology.}

\maketitle


\section{Introduction}\label{intro}
For a knot $K\subset S^3$, the {\it unoriented band unknotting number\/} $\u^u_b(K)$ is the minimum number of oriented or non-oriented band surgeries that turn $K$ into the unknot. 
In \cite{HNT90}, this is called the {\it $H(2)$--unknotting number\/}. 
There are several results on the unoriented band unknotting number, for example, \cite{AK14,KM09}.

In this paper, we determine the unoriented band unknotting number of the $(p,q)$--torus knot $T_{p,q}$ where $p$ and $q$ are relatively prime integers. 
In \cite{BE19}, Bettersworth and Ernst gave an upper bound of $\u^u_b(T_{p,q})$ and conjectured that this bound actually realizes $\u^u_b(T_{p,q})$. 
We give the affirmative answer to the conjecture. Note that since the unoriented band unknotting number of the mirror image of a knot is equal to that of the original knot, we may focus on the positive torus knots. 

\begin{theorem}\label{thm_main}
Let $p,q>1$ be relatively prime integers with $p$ odd. We assume $p<q$ if $q$ is odd. Suppose that 
\[
\cfrac{q}{p}=a_0+\cfrac{\pm 2}{a_1+\cfrac{\pm 1}{a_2+\cfrac{\pm 1}{a_3+\cfrac{\pm 1}{\cdots+\cfrac{\pm 1}{a_n}}}}}
\]
such that 
\begin{itemize}
\item $a_0\ge 0$,
\item $a_1$ is odd and $a_1\ge 3$, and 
\item $a_2,\ldots,a_n$ are positive even integers.
\end{itemize}
Then, $\u^u_b(T_{p,q})=n$.
\end{theorem}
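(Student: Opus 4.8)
The statement is the conjunction of two inequalities, $\u^u_b(T_{p,q})\le n$ and $\u^u_b(T_{p,q})\ge n$, and essentially all the work is in the second. The first is the pinch--number upper bound of Bettersworth and Ernst \cite{BE19}: the normalized continued fraction in the statement --- the ``$\pm 2$'' numerator at the top, ``$\pm 1$'' numerators below, $a_1$ odd and $\ge 3$, and $a_2,\dots,a_n$ positive and even --- is precisely the combinatorial datum encoding a sequence of $n$ Batson pinch moves carrying $T_{p,q}$ to the unknot. Since pinch moves are non-oriented band surgeries, $\u^u_b(T_{p,q})\le n$, and I would simply quote this.

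\textbf{The lower bound via torsion order.} For a knot or link $L$ let $\Ord_{\u}(L)$ denote the $\u$--torsion order of the unoriented knot Floer homology of $L$, i.e.\ the least $N\ge 0$ with $\u^N$ annihilating the $\u$--torsion submodule; thus $\Ord_{\u}$ of the unknot is $0$, while $\Ord_{\u}(L)\ge 1$ for $L$ a non-trivial knot. The key property --- which is why the \emph{unoriented} theory is the right one here --- is stability under band surgery: if $L'$ is obtained from $L$ by a single oriented or non-oriented band surgery, then $|\Ord_{\u}(L)-\Ord_{\u}(L')|\le 1$. The band move is a genus-$0$ cobordism given by a single (orientable or non-orientable) band, the unoriented theory behaves functorially under such cobordisms, and the cobordism map together with its reverse composes to multiplication by $\u$ (to a power at most $1$) on the relevant subquotient; equivalently one can read the bound off the unoriented skein exact triangle relating $L$, $L'$ and a third resolution. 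First I would establish this lemma (or invoke it, if it is recorded earlier). Applying it along an unknotting sequence of length $\u^u_b(K)$ and using $\Ord_{\u}(\text{unknot})=0$ gives the general inequality $\u^u_b(K)\ge \Ord_{\u}(K)$. Combined with the upper bound of the previous paragraph, the whole theorem reduces to the single estimate
\[
\Ord_{\u}(T_{p,q})\ \ge\ n ,
\]
after which $\u^u_b(T_{p,q})=\Ord_{\u}(T_{p,q})=n$ automatically.

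\textbf{Computing $\Ord_{\u}(T_{p,q})$.} Here the arithmetic of $q/p$ must be fed in. The complex $\CFK^\infty(T_{p,q})$ is the staircase complex determined by the numerical semigroup $\langle p,q\rangle$, and the unoriented knot Floer homology, hence $\Ord_{\u}(T_{p,q})$, is computed from this staircase by an explicit combinatorial recipe. I would (i) carry out that computation, expressing $\Ord_{\u}(T_{p,q})$ as an explicit function $f(p,q)$ of the step data of the staircase (equivalently of the gaps of $\langle p,q\rangle$, or of the Alexander polynomial), and then (ii) prove the purely number-theoretic identity $f(p,q)=n$, where $n$ is the length of the normalized continued fraction of $q/p$. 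The natural route to (ii) is induction on $n$ using the recursive structure of the continued fraction: stripping off $a_0$ and the leading $\pm 2$ (and, where needed, $a_1$ and the following $\pm 1$) relates $T_{p,q}$ to a torus knot or torus link with a strictly shorter normalized expansion, along which one verifies $f$ drops by exactly one, the base case $n=1$ being just $\Ord_{\u}(T_{p,q})\ge 1$ for the non-trivial knot $T_{p,q}$. Alternatively one can run the induction directly on the Floer side, using the unoriented skein exact triangle for the band move realizing each continued-fraction step and tracking the torsion order around the triangle.

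\textbf{Main obstacle.} The delicate part is exactly the torus-knot computation of steps (i)--(ii): extracting $\Ord_{\u}(T_{p,q})$ from the staircase and reconciling it with the length of the very specific continued fraction in the statement. The general stability lemma gives only ``$\le 1$ per band,'' hence one inequality ($\Ord_{\u}(T_{p,q})\le n$); the matching lower bound $\Ord_{\u}(T_{p,q})\ge n$ --- equivalently, the assertion that the torsion order genuinely decreases by a full unit at each step of Bettersworth and Ernst's pinch sequence rather than merely by at most one --- requires the finer structure of the torus-knot complex together with the bookkeeping that ties the normalized expansion (odd $a_1\ge 3$, even $a_2,\dots,a_n$, numerator $\pm 2$ then $\pm 1$) to that structure.
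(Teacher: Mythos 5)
Your overall skeleton matches the paper's: sandwich $\Ord'(T_{p,q})\le \u^u_b(T_{p,q})\le P(p,q)$, where the first inequality is Gong--Marengon's band-surgery bound (Corollary 1.9 of their paper; your ``stability lemma'' is essentially this and is fine to quote rather than reprove), the second comes from the pinch sequence, and $P(p,q)=n$ is the continued-fraction bookkeeping coming from Jabuka--Van Cott's classification of pinch sequences. The genuine gap is that the entire mathematical content of the theorem---the estimate $\Ord'(T_{p,q})\ge n$---is only announced, not proved. You propose to (i) extract $\Ord'(T_{p,q})$ from the staircase/Alexander-polynomial data of the $L$-space knot and (ii) match the resulting function of $(p,q)$ with the length of the normalized continued fraction by an induction on $n$. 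Neither step is carried out, and this is exactly the route the author flags as problematic: it requires a closed polynomial (not fractional) form of the Alexander polynomial of $T_{p,q}$ and control of how the relevant indices transform under one step of the continued-fraction/pinch recursion. Your alternative route via an unoriented skein exact triangle is also nontrivial: the intermediate resolutions are links, and a triangle argument a priori yields only inequalities on torsion orders, not the exact unit drop per pinch that the lower bound demands; indeed you yourself note that the stability bound alone only gives $\Ord'(T_{p,q})\le n$.

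The paper proves the exact statement $\Ord'(T_{p',q'})=\Ord'(T_{p,q})-1$ for a single pinch move by a genuinely different mechanism: it realizes the pinch move directly on a $(1,1)$-diagram of $T_{p,q}$ drawn on the Heegaard torus, shows that the move amounts to sliding the two basepoints $z,w$ while $\alpha$ and $\beta$ stay fixed as sets, and then, using the coherent-orientation property of reduced $(1,1)$-diagrams of $L$-space knots due to Greene--Lewallen--Vafaee, checks domain by domain in the universal cover that every differential bigon loses exactly one basepoint, i.e.\ $n_{z'}(\phi')+n_{w'}(\phi')=n_z(\phi)+n_w(\phi)-1$. This gives $\HFK'(T_{p',q'})\cong U\cdot \HFK'(T_{p,q})$, so the torsion order drops by exactly one per pinch, and an induction on the pinch number yields $\Ord'(T_{p,q})=P(p,q)=n$. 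So your reduction to the torsion-order computation is correct and coincides with the paper's framing, but as written the proposal leaves the key lower bound unproven, and the specific computational strategy you sketch for it is the one the paper deliberately avoided as difficult.
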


A non-oriented band surgery on a torus knot considered in \cite{BE19} is also independently studied under the name of a {\it pinch move\/}, which is introduced by Batson \cite{Bat14}, and examined in detail by Jabuka and Van Cott \cite{JV21}. 
They have studied a pinch move with the motivation to investigate the non-orientable $4$-genus of a torus knot.
This operation turns a positive torus knot into another positive torus knot (possibly the unknot) and decreases the values of its parameters. 
Hence, a repeated application of a pinch move will eventually lead to the unknot. 
The minimum number of pinch moves required to transform the $(p,q)$--torus knot into the unknot is called the {\it pinch number\/}, denoted by $P(p,q)$. 
Since a pinch move is a typical non-oriented band surgery, it immediately follows $\u^u_b(T_{p,q})\le P(p,q)$. 
In fact, Theorem \ref{thm_main} claims $\u^u_b(T_{p,q})=P(p,q)$, as conjectured in \cite{BE19}.

The key ingredient to prove Theorem \ref{thm_main} is the {\it unoriented knot Floer torsion order\/} $\Ord'(K)\in\Z$ of a knot $K$ developed by Gong and Marengon in \cite{GM23}. 
According to Corollary 1.9 of \cite{GM23}, $\Ord'(K)\le \u_b^u(K)$ for any knot $K$. 
So, we have $\Ord'(T_{p,q})\le \u^u_b(T_{p,q})\le P(p,q)$. 
Thus, Theorem \ref{thm_main} is achieved by showing $\Ord'(T_{p,q})=P(p,q)$. 

The proof is based on a combinatorial discussion regarding the change induced by a pinch move on a $(1,1)$--diagram of a torus knot. 
Since a positive torus knot is an $L$--space knot (see \cite{OS05}), its $(1,1)$--diagram has a special property by the work of Greene, Lewallen and Vafaee \cite{GLV18}. 
The argument relies on the property and the fact that a torus knot can be put on the standard torus in $S^3$. 

\begin{remark}
Since a positive torus knot is an $L$--space knot, a knot Floer theoretical object is determined by its Alexander polynomial \cite{OS05}. 
Hence, we can obtain $\Ord'$ of a torus knot from its Alexander polynomial in principle. 
However, to do so, we need to get a polynomial form rather than a fractional one of its Alexander polynomial, and capture the change of the indices under a pinch move. 
It seems to be difficult, so we decided to use the $(1,1)$--diagrams.
\end{remark}

 \begin{acknowledgment}
The author would like to thank Masakazu Teragaito for his thoughtful guidance and helpful discussions about this work. The author was supported by JST SPRING, Grant Number JPMJSP2132.
\end{acknowledgment}

\section{The pinch move}
In this section, we give some facts of a pinch move and a formula for the pinch number.

Put the torus knot $T_{p,q}$ on the standard torus $\Sigma\subset S^3$ so that $T_{p,q}$ intersects all standard meridian curves of $\Sigma$ transversely. 
This is referred to as the {\it standard position\/}. 
Then there is the ``simplest" choice of a non-oriented band surgery on $T_{p,q}$, obtained by placing the band on $\Sigma$ between adjacent strings, see Figure \ref{pinch}. 
This non-oriented band surgery is called the {\it pinch move\/} on $T_{p,q}$. 
Note that the pinch move is unique up to isotopy of the band. 
The pinch move on a torus knot yields another torus knot. 
It is known how the parameters change by the pinch move. 

\begin{figure}
\centering
\includegraphics[scale=0.25]{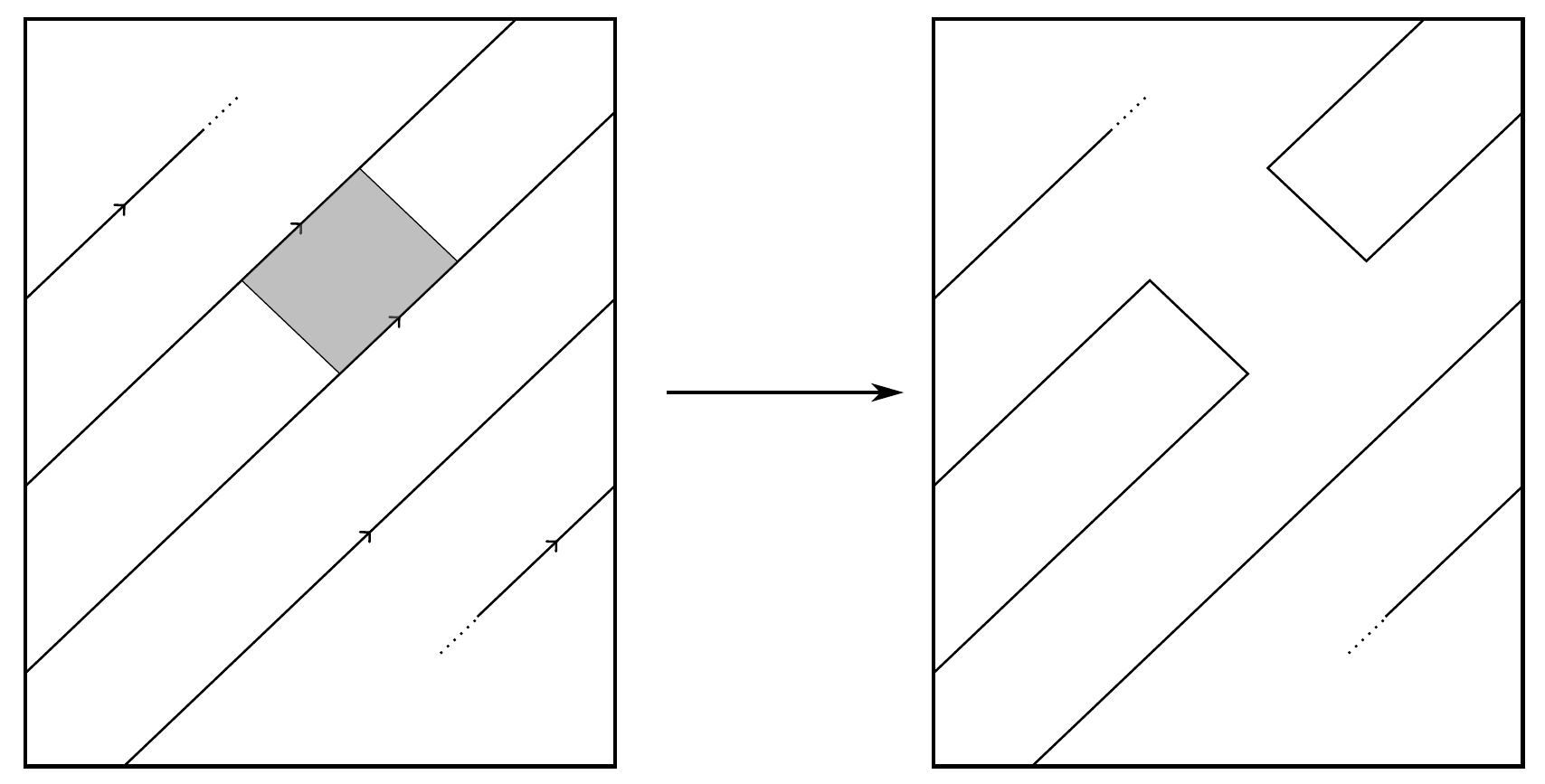}
\caption{The pinch move. Here the opposite edges of the square are identified. }\label{pinch}
\end{figure}

\begin{lemma}[\cite{Bat14,JV21}]
Let $0<p<q$ be relatively prime integers. 
The pinch move on $T_{p,q}$ yields the torus knot $T_{r,s}$ with
\[
(r,s)=(|p-2t|,|q-2h|)
\] 
where $t$ and $h$ are the integers uniquely determined by requirements
\begin{align*}
t\equiv -q^{-1} \pmod p,\ \ t\in\{0,\ldots,p-1\},\\
h\equiv p^{-1} \pmod q,\ \ h\in\{0,\ldots,q-1\}.
\end{align*}
\end{lemma}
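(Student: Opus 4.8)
The plan is to carry out the construction explicitly on the standard Heegaard torus $\Sigma\subset S^3$. Identify $\Sigma$ with $(\R/\Z)^2$ so that the meridians of the two complementary solid tori are the level sets of the two coordinates, and parametrize $T_{p,q}$ as the image of $\theta\mapsto(p\theta,q\theta)$, $\theta\in\R/\Z$. Fix a standard meridian $\mu$ meeting $T_{p,q}$ transversely in its $p$ (equally spaced) points. The first step is the elementary observation that two of these points that are adjacent along $\mu$ have $\theta$-coordinates differing, modulo $1$, by $\delta:=(q^{-1}\bmod p)/p$: indeed the $\mu$-coordinate of the point with parameter $\theta$ is $q\theta$, so the $p$ points appear along $\mu$ in the order determined by $qj\bmod p$, and the two $\mu$-neighbours of a point correspond to $+\delta$ and $-\delta$. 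Granting this, the pinch move is precisely the operation that cuts $T_{p,q}$ at the two parameters $\alpha$ and $\alpha+\delta$ and reconnects the resulting arcs --- call them $S$ (the image of $[\alpha,\alpha+\delta]$) and $L$ (the image of $[\alpha+\delta,\alpha+1]$) --- through a band $b$ that runs along $\mu$ between the two puncture points.

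Next I would pin down the homology class of the new curve $K'$; since $b\subset\Sigma$ we have $K'\subset\Sigma$ as well. Because adjacent strands of $T_{p,q}$ in standard position are mutually parallel, $b$ joins two coherently oriented arcs, so the band move is orientation-incoherent: it reverses the induced orientation of exactly one of $S,L$, and the result is a single closed curve (an incoherent band move on a knot produces a knot). The two long sides of $b$ are antiparallel sub-arcs of $\mu$ and cancel in homology, so $[K']=[S]-[L]=2[S]-(p,q)$ in $H_1(\Sigma)$, up to overall sign. It then remains to evaluate the intersection numbers of $S$ with $\mu$ and $\lambda$. Over $S$ the first coordinate sweeps an interval of length $p\delta=(q^{-1}\bmod p)$ and the second coordinate an interval of length $q\delta=k+\tfrac1p$, where $k:=\big(q\,(q^{-1}\bmod p)-1\big)/p\in\Z$ satisfies $kp\equiv-1\pmod q$. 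A winding count --- carried out, for the closed curve $K'$, against a meridian and a longitude that are generic and disjoint from the small band arcs --- yields $[S]\cdot[\mu]=(q^{-1}\bmod p)$ and $[S]\cdot[\lambda]=k$, hence
\[
[K']\cdot[\mu]=2\,(q^{-1}\bmod p)-p,\qquad [K']\cdot[\lambda]=2k-q .
\]
Setting $t:=p-(q^{-1}\bmod p)$ and $h:=q-k$, one checks directly that $t\equiv-q^{-1}\pmod p$ with $t\in\{0,\dots,p-1\}$, that $h\equiv p^{-1}\pmod q$ with $h\in\{0,\dots,q-1\}$ (using $kp\equiv-1\pmod q$), and that $[K']\cdot[\mu]=p-2t$, $[K']\cdot[\lambda]=q-2h$.

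To finish, recall that a simple closed curve on $\Sigma$ is isotopic in $S^3$ to the standard torus knot $T_{r,s}$ carrying its (necessarily primitive) homology class, and that primitivity of $[K']$ is automatic since $K'$ is connected; this already identifies $K'$ with $T_{r,s}$ for $\{|r|,|s|\}=\{|p-2t|,|q-2h|\}$. To see that one gets $T_{|p-2t|,|q-2h|}$ itself and not its mirror, one needs $p-2t$ and $q-2h$ to carry the same sign (or one to vanish), which is a one-line consequence of $0<p<q$ and $\gcd(p,q)=1$: $2(q^{-1}\bmod p)>p$ if and only if $2k>q$, and the borderline case $2(q^{-1}\bmod p)=p$ forces $p=2$ and $r=0$, the unknot. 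Finally, the apparent ambiguity in the choice of adjacent strand ($+\delta$ versus $-\delta$) is harmless, since the two $\mu$-neighbours of a puncture are interchanged by the symmetry $(z,w)\mapsto(\bar z,\bar w)$ of the pair $(\Sigma,T_{p,q})$, an orientation-preserving symmetry of $S^3$; so the pinch move is well defined, and its effect is exactly as stated. The step I expect to be the real obstacle is this winding count: the \emph{geometric} number of intersections of $K'$ with $\mu$ is $p-2$, typically strictly larger than $|p-2t|$, so the argument must be run homologically, and the chosen meridian and longitude must be perturbed past the endpoints of $S$ and past the band with some care.
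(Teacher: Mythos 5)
The paper does not prove this lemma at all: it is quoted from Batson and Jabuka--Van Cott, so there is no internal proof to compare against. Your proposal supplies a genuine, self-contained argument, and its substance is correct: the pinch band joins the strands at parameters differing by $\delta=(q^{-1}\bmod p)/p$, the surgered curve $K'$ stays embedded and connected on $\Sigma$, its class is $(2u-p,\,2k-q)$ with $u=q^{-1}\bmod p$ and $k=(qu-1)/p$, and the residue bookkeeping ($t=p-u$, $h=q-k$, $kp\equiv-1\pmod q$) together with your sign comparison $2u>p\Leftrightarrow 2k>q$ (borderline only at $p=2$) does recover exactly $(|p-2t|,|q-2h|)$ with both entries of the same sign, i.e.\ a positive torus knot, matching the statement and, e.g., $T_{5,7}\to T_{1,1}$.

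One detail in your justification is wrong as stated, though it does not affect the outcome: the two long sides of the band, traversed along $K'$, are \emph{parallel}, not antiparallel. Since the pinch is the incoherent reattachment, the traversal of $K'$ runs $S$ forward, one band side from the new strand back to the old, then $L$ backward, then the other band side again from the new strand to the old; so the two sides do not cancel. Each contributes a shift of about $1/p$ in the meridian coordinate, and it is precisely this $-2/p$ that reconciles the naive displacement of $S$ minus that of $L$ (which carries a spurious $+2/p$) with the integral class $(2u-p,2k-q)$. Your actual counting procedure is immune to this, because you evaluate $K'$ against a meridian and a longitude chosen disjoint from the band: then the band sides contribute nothing, the $S$-portion meets the longitude exactly $k$ (not $k+1$) times precisely because the ``extra'' level interval of length $1/p$ swept by $S$ is the band's interval, and the reversed $L$-portion contributes $-(q-k)$; so the totals $2u-p$ and $2k-q$ stand. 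I would simply delete the cancellation claim and phrase the homology computation entirely through that count (or through the lifted displacement of $K'$ in the universal cover), and note that primitivity of $[K']$ follows from embeddedness plus connectedness, the class $(0,0)$ being excluded by coprimality.
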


In particular, the pinch move on a positive torus knot yields another positive torus knot. 
Then, we can get the sequence 
\[
T_{p,q}=T_{p_n,q_n}\longrightarrow T_{p_{n-1},q_{n-1}}\longrightarrow \cdots \longrightarrow T_{p_1,q_1}\longrightarrow T_{p_0,q_0}=O
\] 
for some $n\ge 1$, where $O$ is the unknot and $T_{p_{i+1},q_{i+1}}$ is obtained by the pinch move on $T_{p_i,q_i}$. 
Thus, the pinch number $P(p,q)$ is $n$ in this case. 

Here, we give a formula of $P(p,q)$ in term of a certain continued fraction expansion of $p/q$. 
We represent the pinch move from $T_{p,q}$ to $T_{r,s}$ by 
\[
T_{p,q} \overset{\epsilon}{\longrightarrow} T_{r,s},
\]
where $\epsilon={\rm Sign}(p-2t)$. 
The formula (Proposition \ref{prop_pinch_formula}) essentially follows from Jabuka and Van Cott's result:

\begin{theorem}[Theorem 2.7 of \cite{JV21}]\label{thm_JV21_pinchnumber}
Let $p,q>1$ be relatively prime integers with $p$ odd. We assume $p<q$ if $q$ is odd.
Suppose that there is a sequence
\[
T_{p,q}=T_{p_n,q_n}\overset{\epsilon_n}{\longrightarrow} T_{p_{n-1},q_{n-1}}\overset{\epsilon_{n-1}}{\longrightarrow} \cdots \overset{\epsilon_2}{\longrightarrow}T_{p_1,q_1}\overset{\epsilon_1}{\longrightarrow} T_{p_0,q_0}=O.
\]
Then the integers generated in the process of these pinch moves satisfy the following five identities and constraints:
\begin{enumerate}
\item $p_0=1$. 
\item $q_0\ge 0$, and if $q_0=0$ or $1$, then $\epsilon_1=-1$.
\item The value of $p_1$ is odd and $p_1\ge 3$.
\item $q_1=q_0p_1-2\epsilon_1$.
\item There exist positive even integers $m_1,m_2,\ldots,m_{n-1}$ such that, for all $k\in \{2,3,\ldots,n\}$,
\[
p_k=m_{k-1}p_{k-1}-\epsilon_{k-1}\epsilon_{k}p_{k-2},\ q_k=m_{k-1}q_{k-1}-\epsilon_{k-1}\epsilon_{k}q_{k-2}.
\]
\end{enumerate}
Moreover, the correspondence between $(p,q)$ and the data set
\[
\{n, q_0,p_1,\{\epsilon_k\}_{k=1}^n,\{m_k\}_{k=1}^{n-1}\}
\]
subject to constraints $(1)$--$(5)$ above is a bijection.
\end{theorem}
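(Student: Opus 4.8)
The plan is to derive the theorem entirely from the pinch move formula of the preceding lemma by elementary number theory; the only topological input I need beyond that lemma is that the pinch move on a torus knot is unique up to isotopy, so that $T_{p,q}$ has a well-defined pinch sequence to the unknot. First I would record two elementary facts about a single move $T_{p,q}\overset{\epsilon}{\longrightarrow}T_{r,s}$: the value $r=|p-2t|$ has the same parity as $p$ (since $r=0$ would need $t=p/2$ with $p$ even), and $r<p$ strictly (since $t=0$ would force $q^{-1}\equiv 0\pmod p$, impossible for $p>1$), and likewise $s<q$. Propagated along the sequence, with every $p_k$ odd, this makes the sequence terminate and gives $1=p_0<p_1<\cdots<p_n$: the terminal value is $p_0=1$ because $T_{p_0,q_0}$ is the unknot, which is identity (1), and then $p_1\ge 3$ since $p_1>1$ is odd, which is identity (3); similarly $0\le q_0<q_1<\cdots<q_n$, giving the first half of (2).

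The next step is to track a determinant. For one move, substituting $t=(p-\epsilon r)/2$ into $qt\equiv -1\pmod p$, and the analogous expression for $h$ into $ph\equiv 1\pmod q$, yields $qr\equiv 2\epsilon\pmod p$ together with the companion relation $ps\equiv -2\epsilon\pmod q$; combined with $r\le p-2$, $s\le q-2$ and the Chinese Remainder Theorem, this pins down $ps-qr$ to have absolute value $2$, with sign controlled by $\epsilon$. Applied to $D_k:=p_{k-1}q_k-p_kq_{k-1}$ along the sequence, this gives $D_k=-2\epsilon_k$ for every $k$; as a bonus $\gcd(p_k,q_k)=1$, since any common divisor divides $D_k=\pm2$ and is odd. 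For $k=1$ the equation $D_1=q_1-p_1q_0=-2\epsilon_1$ is exactly identity (4), and running the same computation at the terminal move, together with $q_1>0$ and the congruence that fixes $t_1$, produces the remaining constraint in (2).

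For identity (5), fix $k\ge 2$. Because $\gcd(p_{k-1},q_{k-1})=1$, the vector $(p_{k-1},q_{k-1})$ is primitive, so to obtain $(p_k,q_k)=m_{k-1}(p_{k-1},q_{k-1})-\epsilon_{k-1}\epsilon_k(p_{k-2},q_{k-2})$ for some $m_{k-1}\in\Z$ it suffices to check that $(p_k,q_k)+\epsilon_{k-1}\epsilon_k(p_{k-2},q_{k-2})$ is proportional to $(p_{k-1},q_{k-1})$, i.e.\ that the quantity $p_{k-1}(q_k+\epsilon_{k-1}\epsilon_k q_{k-2})-q_{k-1}(p_k+\epsilon_{k-1}\epsilon_k p_{k-2})=D_k-\epsilon_{k-1}\epsilon_k D_{k-1}$ vanishes, which it does by the sign rule for the $D_k$ just established; reducing the first coordinate modulo $2$ shows $m_{k-1}$ is even, and the growth of the $q_k$ shows $m_{k-1}>0$. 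The bijection then falls out. Injectivity: identities (1), (4), (5) reconstruct $(p_0,q_0),\dots,(p_n,q_n)$, hence $(p,q)=(p_n,q_n)$, from the data $\{n,q_0,p_1,\{\epsilon_k\},\{m_k\}\}$ -- which is, incidentally, the continued-fraction data of Theorem \ref{thm_main}. Surjectivity: starting from any data obeying (1)--(5), the same formulas define pairs $(p_k,q_k)$, and one checks by induction that they are strictly increasing, coprime (again from $D_k=\pm2$), with $p_k$ odd and with $p_k<q_k$ whenever $q_k$ is odd, and that $t_k:=(p_k-\epsilon_kp_{k-1})/2$ and $h_k:=(q_k-\epsilon_kq_{k-1})/2$ meet the range and congruence conditions of the pinch lemma -- the congruences being the reverse of the computation in the second step -- so that the pinch move on $T_{p_k,q_k}$ does yield $T_{p_{k-1},q_{k-1}}$ with sign $\epsilon_k$; hence the reconstructed sequence is a genuine pinch sequence and the given data actually occurs.

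The conceptual skeleton -- parity, the invariant determinant, primitivity of consecutive pairs -- is short, so I expect the main obstacle to be the bookkeeping of signs and ranges: confirming that $\epsilon$ is read consistently whether computed from the $p$-side or the $q$-side, keeping the normalization ``$p<q$ if $q$ is odd'' coherent with the convention that $p_k$ always denotes the odd parameter even when $p_k>q_k$, verifying $0\le t_k<p_k$ and $0\le h_k<q_k$ in the surjectivity step, and -- most delicately -- handling the degenerate terminal values $q_0\in\{0,1\}$, where $T_{1,q_0}$ is the unknot for slightly different reasons and the direction of the first band is exactly what the constraint $\epsilon_1=-1$ in (2) records.
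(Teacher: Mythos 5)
The paper itself does not prove this statement: it is quoted verbatim (with $p$ and $q$ swapped) as Theorem 2.7 of \cite{JV21}, so there is no internal proof to compare yours against, and what you propose is a self-contained elementary argument in the spirit of the original source. Your skeleton is sound and, as far as I can check, sufficient: coordinatewise strict descent and parity preservation under one pinch give (1)--(3); the identity $ph-qt=1$ (equivalently your CRT-plus-bounds argument) forces $|p_{k-1}q_k-p_kq_{k-1}|=2$ with sign governed by the move, which at $k=1$ is (4); primitivity of $(p_{k-1},q_{k-1})$ together with the vanishing of $D_k-\epsilon_{k-1}\epsilon_kD_{k-1}$ gives (5), with evenness of $m_{k-1}$ from parity and positivity from monotonicity of the $q_k$; and the bijection is exactly reconstruction of the $(p_k,q_k)$ from the data in one direction and, in the other, verification that $t_k$ and $h_k$ defined from consecutive pairs satisfy the range and congruence conditions of the pinch lemma, all of which goes through by induction (coprimality of the reconstructed pairs again coming from $D_k=\pm2$ and oddness of $p_k$).

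Three of the items you relegate to ``bookkeeping'' should be written out as lemmas, because they carry actual content. First, the sign: with the paper's stated convention $\epsilon=\mathrm{Sign}(p-2t)$, $t\equiv -q^{-1}\pmod p$, the identity $ph-qt=1$ gives $p_{k-1}q_k-p_kq_{k-1}=+2\epsilon_k$, not $-2\epsilon_k$; for example $T_{3,4}\to T_{1,2}$ has $\mathrm{Sign}(p_1-2t)=-1$ while (4) forces $\epsilon_1=+1$. So the $\epsilon_k$ appearing in the transcribed theorem is the opposite of the paper's displayed convention (an artifact of the swap of roles relative to \cite{JV21}); your argument is insensitive to this up to replacing $\epsilon_k$ by $-\epsilon_k$ throughout, but the write-up must fix one convention and stick to it, since (2), (4), (5) all encode that sign. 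Second, the claim that the ``odd track'' is well defined---that the odd coordinate of the output is the image of the odd coordinate of the input and that no relabelling swap occurs when both coordinates are odd---needs the inequality $|p-2t|\le|q-2h|$ for $p<q$; this follows from $q(p-2t)=p(q-2h)+2$, which is again the identity $ph-qt=1$, and without it the recursions in (4)--(5) would not refer to a single coordinate. Third, the same-sign claim $\mathrm{Sign}(p-2t)=\mathrm{Sign}(q-2h)$, which you use silently when passing between the $p$-side and $q$-side congruences, is another consequence of the same identity. With those points made explicit (and the degenerate terminal cases $q_0\in\{0,1\}$ handled as you indicate, using $q_1>0$ and the normalization $p_1<q_1$ when $q_1$ is odd), your proposal does prove the statement.
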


Note that the roles of $p$ and $q$ in the above theorem are swapped from the original ones.

\begin{proposition}\label{prop_pinch_formula}
Let $p,q>1$ be relatively prime integers with $p$ odd. We assume $p<q$ if $q$ is odd. 
Suppose that 
\[
\cfrac{q}{p}=q_0+\cfrac{\pm 2}{p_1+\cfrac{\pm 1}{m_1+\cfrac{\pm 1}{m_2+\cfrac{\pm 1}{\cdots+\cfrac{\pm 1}{m_{n-1}}}}}}
\]
such that 
\begin{itemize}
\item $q_0\ge 0$,
\item $p_1$ is odd and $p_1\ge 3$, and 
\item $m_1,m_2,\ldots,m_{n-1}$ are positive even integers.
\end{itemize}
Then, $P(p,q)=n$. (If $\frac{q}{p}=q_0+\frac{\pm 2}{p_1}$, then $P(p,q)=1$.)
\end{proposition}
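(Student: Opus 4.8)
The plan is to extract the claimed continued fraction directly from Jabuka and Van Cott's Theorem~\ref{thm_JV21_pinchnumber}. Since the pinch move on a torus knot is unique up to isotopy, there is a \emph{unique} sequence of pinch moves from $T_{p,q}$ to $O$, of some length $n=P(p,q)$, and Theorem~\ref{thm_JV21_pinchnumber} attaches to it a data set $\{n,q_0,p_1,\{\epsilon_k\}_{k=1}^{n},\{m_k\}_{k=1}^{n-1}\}$ obeying constraints $(1)$--$(5)$, the correspondence $(p,q)\leftrightarrow(\text{data set})$ being a bijection; this bijection will do most of the work. First I would build a dictionary between such data sets and continued fractions: to a data set associate the continued fraction with partial quotients $q_0,p_1,m_1,\dots,m_{n-1}$ and partial numerators $-2\epsilon_1,-\epsilon_1\epsilon_2,-\epsilon_2\epsilon_3,\dots,-\epsilon_{n-1}\epsilon_n$, namely
\[
q_0+\cfrac{-2\epsilon_1}{p_1+\cfrac{-\epsilon_1\epsilon_2}{m_1+\cfrac{-\epsilon_2\epsilon_3}{\cdots+\cfrac{-\epsilon_{n-1}\epsilon_n}{m_{n-1}}}}}\,.
\]
By $(2),(3),(5)$ this has exactly the shape in the statement ($q_0\ge 0$; $p_1$ odd and $\ge 3$; the $m_i$ positive and even; leading numerator $\pm 2$, the others $\pm 1$; and $n$ fraction bars), and conversely from any expansion of that shape one recovers $\epsilon_1$ from the sign of the leading $\pm 2$, then $\epsilon_k$ from $-\epsilon_{k-1}\epsilon_k=(\text{the }k\text{-th partial numerator})$, and then $p_k,q_k$ from $p_0=1$ together with identities $(4)$ and $(5)$; I will call this the candidate data set of the expansion.

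The heart of the matter is that identities $(4)$ and $(5)$ are \emph{exactly} the recursions for the numerators $A_k$ and denominators $B_k$ of the convergents of the continued fraction displayed above: with the standard conventions $A_{-1}=1$, $A_0=q_0$, $B_{-1}=0$, $B_0=1\,(=p_0)$, property $(1)$ gives $B_0=p_0$, property $(4)$ gives $A_1=p_1q_0-2\epsilon_1=q_1$, and property $(5)$ is the convergent recursion $A_k=a_kA_{k-1}+b_kA_{k-2}$, $B_k=a_kB_{k-1}+b_kB_{k-2}$ for $k\ge 2$; since also $B_1=p_1$ and $A_0=q_0$ by construction, induction gives $q_k=A_k$ and $p_k=B_k$ for all $k$, so the value of the continued fraction is $q_n/p_n$. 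I would then record two arithmetic facts. First, every $p_k$ is odd: this holds for $p_0=1$ and for $p_1$, and since the $m_i$ are even, $(5)$ gives $p_k\equiv p_{k-2}\pmod 2$. Second, the convergent determinant identity gives $q_kp_{k-1}-q_{k-1}p_k=\pm 2$ (its absolute value is $|b_1\cdots b_k|=2$), so $\gcd(q_n,p_n)\mid 2$; since $p_n$ is odd, $p_n=B_n>0$, and $q_n/p_n=q/p>0$, we conclude $(p_n,q_n)=(p,q)$. Feeding the data set of the pinch sequence of $T_{p,q}$ into the dictionary therefore produces a continued fraction of the required shape, equal to $q/p$, with exactly $n=P(p,q)$ fraction bars; this proves one half of the statement.

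For the other half — every expansion of $q/p$ of the stated shape has the same $n$, forcing $P(p,q)=n$ — I would take such an expansion, form its candidate data set, and verify $(1)$--$(5)$. Properties $(1),(3),(4),(5)$ hold by construction, and the two arithmetic facts give $(p_n,q_n)=(p,q)$; Theorem~\ref{thm_JV21_pinchnumber}'s bijection then forces the candidate data set to be the one coming from the pinch sequence of $T_{p,q}$, so $n=P(p,q)$. The one point left is property $(2)$: that $q_0\in\{0,1\}$ forces $\epsilon_1=-1$. Here the normalization hypotheses come in. By a downward induction using $m_i\ge 2$, each tail $m_{j-1}+(\text{the continued fraction from }m_j\text{ on})$ lies in $(m_{j-1}-1,m_{j-1}+1)\subset(1,\infty)$, so $p_1+(\text{the rest})\in(p_1-1,p_1+1)\subset(2,\infty)$; thus the expansion equals $q_0+\frac{\pm 2}{T}$ with $T>2$, and $q/p-q_0$ has absolute value $<1$ and the sign of the leading $\pm 2$, i.e.\ of $-\epsilon_1$. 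If $q_0=0$, positivity of $q/p$ forces $\epsilon_1=-1$. If $q_0=1$, then $\epsilon_1=1$ would give $q/p\in(0,1)$, hence $q<p$, while the parity argument via $(4),(5)$ (now with $q_0$ odd) makes every $q_k$ odd, so $q$ is odd, contradicting the hypothesis $p<q$ for $q$ odd; hence $\epsilon_1=-1$ again. The degenerate case $n=1$ (no $m_i$, $q/p=q_0+\frac{\pm 2}{p_1}$) is immediate.

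I expect property $(2)$ in the converse to be the step demanding the most care: it is the only place the hypotheses ``$p$ odd'' (through the parity argument) and ``$p<q$ when $q$ is odd'' are used, and it is exactly what guarantees that the expansion — and with it the integer $n$ — is uniquely determined by $q/p$. The remaining work is bookkeeping: pinning down the sign conventions that identify the partial numerators with $-2\epsilon_1$ and the $-\epsilon_{k-1}\epsilon_k$, and confirming $\gcd(q_n,p_n)=1$.
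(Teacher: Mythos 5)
Your proposal is correct and follows essentially the same route as the paper: both directions rest on Theorem \ref{thm_JV21_pinchnumber}, translating the pinch-sequence data set $\{n,q_0,p_1,\{\epsilon_k\},\{m_k\}\}$ into the continued fraction via the convergent recursions and then invoking the bijection to get uniqueness, hence $P(p,q)=n$. The only difference is that you spell out details the paper leaves implicit --- identifying $(A_n,B_n)=(q,p)$ exactly via the determinant, parity and positivity arguments, and verifying constraint $(2)$ for the candidate data set --- which is a faithful (and welcome) elaboration rather than a different argument.
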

\begin{proof}
By Theorem \ref{thm_JV21_pinchnumber}, we can obtain
\[
\cfrac{q_n}{p_n}=q_0+\cfrac{-2\epsilon_1}{p_1+\cfrac{-\epsilon_1\epsilon_2}{m_1+\cfrac{-\epsilon_2\epsilon_3}{m_2+\cfrac{-\epsilon_3\epsilon_4}{\cdots+\cfrac{-\epsilon_{n-1}\epsilon_n}{m_{n-1}}}}}}.
\]
Hence, there exists the continued fraction expansion as required. 

Conversely, given the continued fraction satisfying the conditions above, the data set $\{n, q_0,p_1,\{\epsilon_k\}_{k=1}^n,\{m_k\}_{k=1}^{n-1}\}$ is determined, since $\epsilon_k=\pm 1\ (k=1,\ldots,n)$. 
Note that the correspondence between $(p,q)$ and the data set is a bijection by Theorem \ref{thm_JV21_pinchnumber}.
Therefore, the continued fraction satisfying the conditions is unique, so we have the conclusion.  
\end{proof}

\section{The unoriented knot Floer homology}
In this section, we review the unoriented knot Floer homology. 
We assume that the reader is familiar with knot Floer homology theory. 
There are good survey articles, for example \cite{Hom17,Man16}. 

The unoriented knot Floer homology is introduced in \cite{OSS17A} (see also \cite{Fan19,OSS17B}). 
Let $(\Sigma_g; \boldsymbol\alpha=\{\alpha_1,\ldots,\alpha_g\},\boldsymbol\beta=\{\beta_1,\ldots,\beta_g\};z,w)$ be a genus $g$ doubly pointed Heegaard diagram for a knot $K$ in $S^3$. 
We take the symmetric product space ${\rm Sym}^g(\Sigma_g)$ and its subspaces $T_{\boldsymbol\alpha}:=\alpha_1\times\cdots\times\alpha_g$ and $T_{\boldsymbol\beta}:=\beta_1\times\cdots\times\beta_g$. 
The {\it unoriented knot Floer chain complex\/} $\CFK'(K)$ is an $\F_2[U]$--module freely generated by intersection points $T_{\boldsymbol\alpha}\cap T_{\boldsymbol\beta}$, where $\F_2[U]$ is a polynomial ring with the formal variable $U$ and $\F_2:=\Z/2\Z$ coefficients. 
Let $\pi_2(x,y)$ be the set of homotopy classes of Whitney disks from $x$ to $y$. 
The differential $\partial$ is defined by 
\[
\partial x=\sum_{y\in T_{\boldsymbol\alpha}\cap T_{\boldsymbol\beta}}\sum_{\substack{\phi\in \pi_2(x,y),\\ \mu(\phi)=1}}\#(\mathcal{M}(\phi)/\R)U^{n_z(\phi)+n_w(\phi)}\cdot y.
\] 
Here, $\mathcal{M}(\phi)$ is the space of holomorphic representations of $\phi$, $\mu(\phi)$ is the Maslov index and $n_z(\phi)$ (resp. $n_w(\phi)$) is the algebraic intersection number between $\phi$ and $\{z\}\times{\rm Sym}^{g-1}(\Sigma_g)$ (resp. $\{w\}\times{\rm Sym}^{g-1}(\Sigma_g)$). 
Remark that $\partial$ differs from the differential of $\CFK^{\infty}(K)$ only in the way that the powers of $U$ are counted. 
The grading structure referred to as {\it $\delta$-grading\/} can be considered, but it is not relevant to this discussion, so we do not need to worry about it.

The {\it unoriented knot Floer homology\/} $\HFK'(K)$ is defined as the homology of $(\CFK'(K),\partial)$. 
Of course, $\HFK'(K)$ has an $\F_2[U]$--module structure.
In \cite{GM23}, they define the {\it unoriented knot Floer torsion order\/} as 
\[
\Ord'(K)=\min\{n\ge 0\mid U^n\cdot {\rm Tor}=\{0\}\}
\]
where ${\rm Tor}$ is the $\F_2[U]$--torsion submodule of $\HFK'(K)$.
As mentioned in Section \ref{intro}, the following result is important to prove Theorem \ref{thm_main}.

\begin{proposition}[Corollary 1.9 of \cite{GM23}]\label{prop_GM}
For a knot $K$ in $S^3$,
\[
\Ord'(K)\le \u^u_b(K).
\]
\end{proposition}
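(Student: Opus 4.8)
Here is how this inequality is obtained in \cite{GM23}. The plan is to reduce to a single band surgery and then invoke functoriality of unoriented knot Floer homology under cobordisms. It suffices to prove the monotonicity statement: \emph{if $L'$ is obtained from $L$ by one oriented or non-oriented band surgery, then $\Ord'(L')\le\Ord'(L)+1$}, where $L,L'$ are allowed to be links, since a shortest band--unknotting sequence for a knot need not pass only through knots. Granting this, pick a shortest sequence $K=L_m\to L_{m-1}\to\cdots\to L_0=O$ with $m=\u^u_b(K)$. For the unknot the standard doubly pointed Heegaard diagram has a single generator and zero differential, so $\HFK'(O)\cong\F_2[U]$ is $\F_2[U]$--free and $\Ord'(O)=0$; since a band surgery is reversible, applying the monotonicity statement $m$ times yields $\Ord'(K)\le m=\u^u_b(K)$.

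For the monotonicity statement the key input is a pair of $\F_2[U]$--module maps
\[
F\colon \HFK'(L)\longrightarrow\HFK'(L'),\qquad G\colon\HFK'(L')\longrightarrow\HFK'(L),
\]
with $G\circ F=U\cdot\mathrm{id}$ (the $\delta$--grading shifts play no role here). Given these, the bound is pure algebra: if $x\in\HFK'(L')$ is $\F_2[U]$--torsion with $U^kx=0$, then $U^kF(x)=F(U^kx)=0$, so $F(x)$ is torsion and hence $U^{\Ord'(L)}F(x)=0$; therefore $U^{\Ord'(L)+1}x=G\bigl(F(U^{\Ord'(L)}x)\bigr)=G\bigl(U^{\Ord'(L)}F(x)\bigr)=G(0)=0$. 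Thus $U^{\Ord'(L)+1}$ annihilates all torsion of $\HFK'(L')$, i.e.\ $\Ord'(L')\le\Ord'(L)+1$.

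To construct $F$ and $G$ I would use cobordism maps. A band surgery from $L$ to $L'$ has a standard trace $W\subset S^3\times[0,1]$, obtained from $L\times[0,1]$ by attaching one two-dimensional $1$--handle along the band; $W$ is connected with $\chi(W)=-1$, and its reverse $\overline W$ is the analogous cobordism from $L'$ to $L$. Following \cite{GM23}, $W$ and $\overline W$ induce $\F_2[U]$--module maps $F=F_W$ and $G=F_{\overline W}$, and $G\circ F$ is induced by $\overline W\circ W$, a cobordism from $L$ to itself built from a cancelling handle pair. A neck--stretching argument at the middle level, together with handle cancellation, identifies $G\circ F$ with multiplication by a power of $U$, and tracking the interaction of the two basepoints $z,w$ with the single attached handle shows that this power equals $1$. (Alternatively, one can stay entirely inside Heegaard--Floer theory: realize the band move on a doubly pointed Heegaard diagram so that $L$ and $L'$ become the two resolutions at a crossing of an intermediate diagram, and read off $F,G$ with $G\circ F\simeq U\cdot\mathrm{id}$ from the mapping--cone form of the $U$--equivariant unoriented skein exact triangle of \cite{OSS17A}.)

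The \textbf{main obstacle} is exactly the construction and bookkeeping of these maps over $\F_2[U]$ for a \emph{non-orientable} band surgery: one must check that the cobordism (or skein) maps are genuinely $\F_2[U]$--linear, that the composition law holds on homology rather than merely up to a filtered quasi--isomorphism, and --- most delicately --- that a single handle contributes exactly one power of $U$ and not two (which would only give the weaker bound $\Ord'(K)\le 2\,\u^u_b(K)$). This is the technical heart of \cite{GM23}; once it is in place, the proposition follows from the short induction and algebra above.
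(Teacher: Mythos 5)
The paper does not prove this proposition at all: it is imported verbatim as Corollary 1.9 of \cite{GM23}, so there is no in-paper argument to compare against, and deferring the construction of the cobordism maps to Gong--Marengon, as you do, is exactly how the paper treats it. Your overall architecture --- reduce to a single band move, get maps between the unoriented knot Floer homologies whose composition is multiplication by $U$, deduce that each band changes $\Ord'$ by at most one, note $\Ord'(O)=0$, and induct along a minimal band-unknotting sequence --- is the standard route and is consistent with the strategy of \cite{GM23} (there the bound is phrased via a nonorientable cobordism $\Sigma$ with $b_1(\Sigma)$ equal to the number of bands, but this is the same mechanism, band by band).

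One concrete slip: your algebraic step is type-inconsistent as written. You take $x\in\HFK'(L')$ torsion and write $F(x)$, but $F$ has domain $\HFK'(L)$; likewise the final line uses $G\circ F$ on an element of $\HFK'(L')$, whereas $G\circ F$ is an endomorphism of $\HFK'(L)$. With only $G\circ F=U\cdot\mathrm{id}_{\HFK'(L)}$ the bound $\Ord'(L')\le\Ord'(L)+1$ does not follow; what you need is $F\circ G=U\cdot\mathrm{id}_{\HFK'(L')}$ (then $G(x)$ is torsion in $\HFK'(L)$, $U^{\Ord'(L)}G(x)=0$, and $U^{\Ord'(L)+1}x=F\bigl(U^{\Ord'(L)}G(x)\bigr)=0$). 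This is easily repaired, since the band map and the dual-band map satisfy the composition identity in both orders (up to grading shift), which is part of what \cite{GM23} establishes; but as stated your inequality chain does not parse. The remaining heavy lifting --- that these maps are $\F_2[U]$-linear, compose functorially on homology, and that a single (possibly nonorientable) band contributes exactly one power of $U$ --- is, as you correctly flag, the technical heart of \cite{GM23} and is assumed rather than proved, which is acceptable here precisely because the paper itself uses the result as a black box.
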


\begin{remark}
For the torsion order, the original symbol used in \cite{GM23} is $\Ord_U(K)$.
\end{remark}

In general, it is difficult to calculate the differential $\partial$ of $\CFK'(K)$. 
However, if $g=1$, that is, the underlying surface $\Sigma_g$ of a doubly pointed Heegaard diagram is a torus, we can combinatorially calculate $\partial$. 
(Such a diagram is also called a {\it $(1,1)$--diagram\/}.) 
Specifically, for $\phi\in \pi_2(x,y)$, the condition $\mu(\phi)=1$ implies $\#(\mathcal{M}(\phi)/\R)=1\ \pmod2$. 
Also, disks $\phi\in\pi_2(x,y)$ satisfying $\mu(\phi)=1$ can be easily found by considering {\it bigons\/} cobounded by lifts of $\alpha$ and $\beta$ under the universal covering map $\R^2\to\Sigma_1$. 
For more details, see \cite{GMM05,GLV18,OS04}.

\section{A $(1,1)$--diagram}
In this section, we give a $(1,1)$--diagram of a torus knot, where the torus knot itself is also placed on the underlying torus. 
Then, we examine change of the $(1,1)$--diagram by the pinch move. 
Hereafter, we assume that $1<p<q$. 

\subsection{A $(1,1)$--diagram of a torus knot}
Let $K$ be a knot in $S^3$, $\Sigma$ be the torus, $\alpha,\beta$ be simple closed curves in $\Sigma$ and $z,w$ be two points in $\Sigma$ disjoint from $\alpha,\beta$. 
A tuple $(\Sigma;\alpha,\beta;z,w)$ is called a {\it $(1,1)$--diagram\/} (or, a {\it doubly pointed Heegaard diagram\/}) of $K$ if
\begin{itemize}
\item $(\Sigma;\alpha,\beta)$ is the standard genus one Heegaard diagram of $S^3$,
\item $K=k_\alpha\cup k_\beta$, where $k_\alpha$ (resp. $k_\beta$) is a trivial arc connecting $z,w\in\Sigma$ in the inner (resp. outer) solid torus such that it is disjoint from the meridian disk $D_{\alpha}$ (resp. $D_{\beta}$) bounded by $\alpha$ (resp. $\beta$).
\end{itemize}

It is well known that any torus knot admits a $(1,1)$--diagram, and it is given as follow:
First, we draw the $(p,q)$--torus knot $T_{p,q}$ in the standard position on a torus $\Sigma$. 
Let $\alpha$ and $\beta$ be a standard meridian and a standard longitude of $\Sigma$, respectively. 
Furthermore, we orient the knot $T_{p,q}$.
Second, take a point $o$ on the knot $T_{p,q}$ near $\alpha$, but not $\alpha\cup\beta$. 
Then, move $o$ on $T_{p,q}$ along the orientation of $T_{p,q}$ just before touching $\alpha$. 
The trajectory of $o$ is denoted as $k_\alpha$, and let $z$ and $w$ be endpoints of $k_\alpha$, the initial point is $z$ and the terminal point is $w$. See Figure \ref{diagram_T35_1_2}. 
Finally, move $o$ on $T_{p,q}$ further from $w$ along the orientation of $T_{p,q}$ until it reaches $z$. 
However, at this time, $\beta$ is also moved so that $o$ does not touch $\beta$. 
Then, the trajectory of $o$ is denoted as $k_\beta$. See Figure \ref{diagram_T35_3_4}.

\begin{figure}
\centering
\includegraphics[scale=0.3]{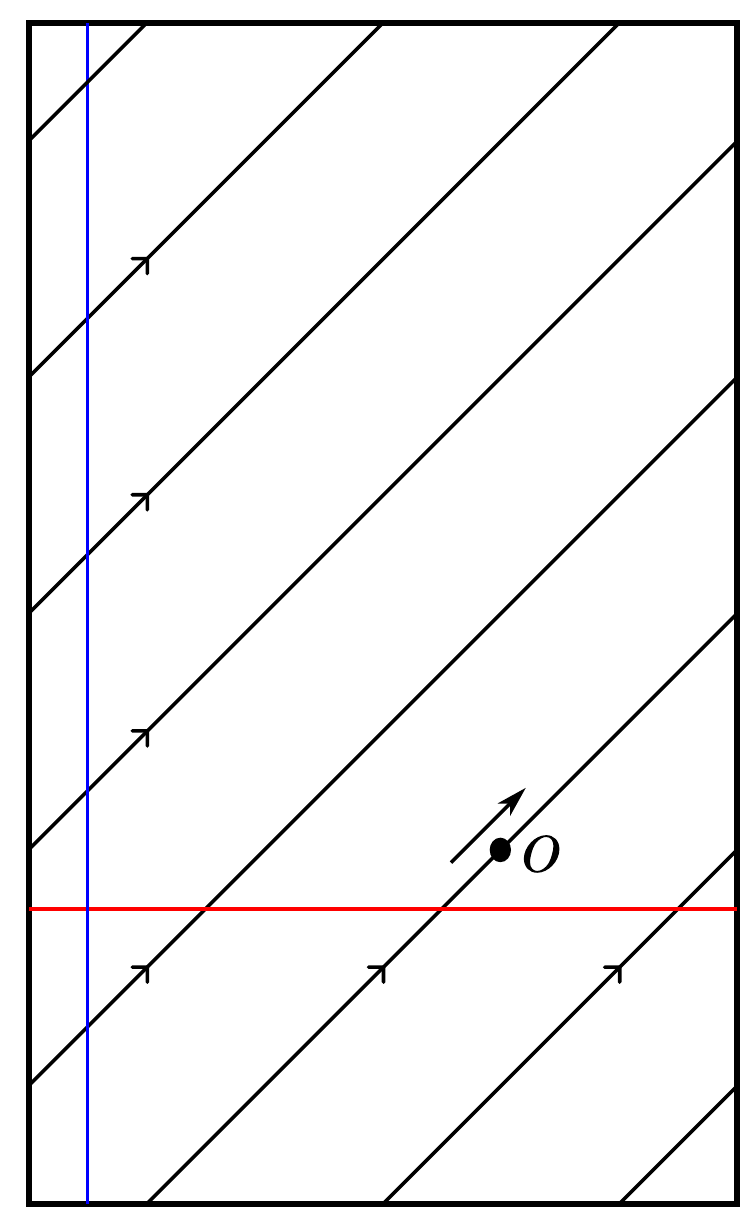}
\hspace{10mm}
\includegraphics[scale=0.3]{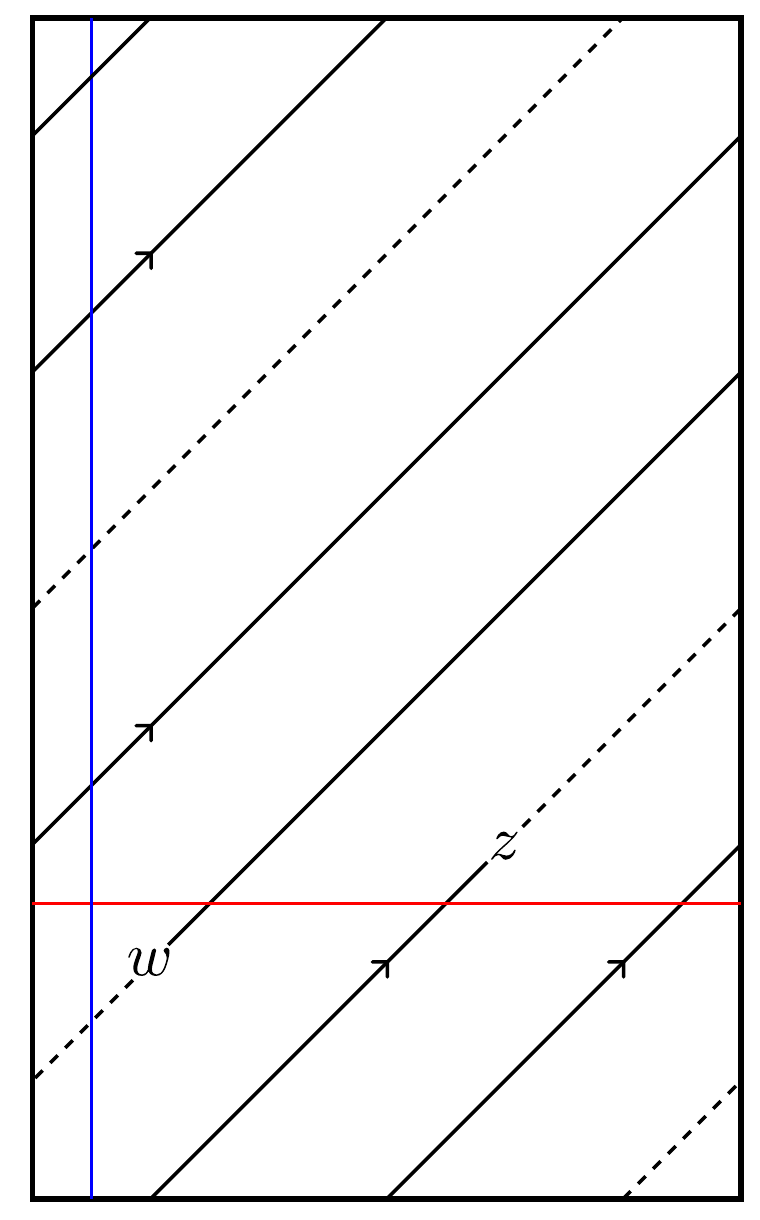}
\caption{(Left) A fundamental domain of the torus $\Sigma$ with the torus knot $T_{p,q}$ (in this case $T_{3,5}$) depicted, where the opposite edges are identified.
The red and blue curves are $\alpha$ and $\beta$ respectively. 
The point $o$ at the initial position is represented by the black dot. 
Move it in the direction of the arrow. 
(Right) Stop the point $o$ just before touching $\alpha$. 
The arc $k_\alpha$ is represented by the dashed line.}\label{diagram_T35_1_2}
\end{figure}

\begin{figure}
\centering
\includegraphics[scale=0.3]{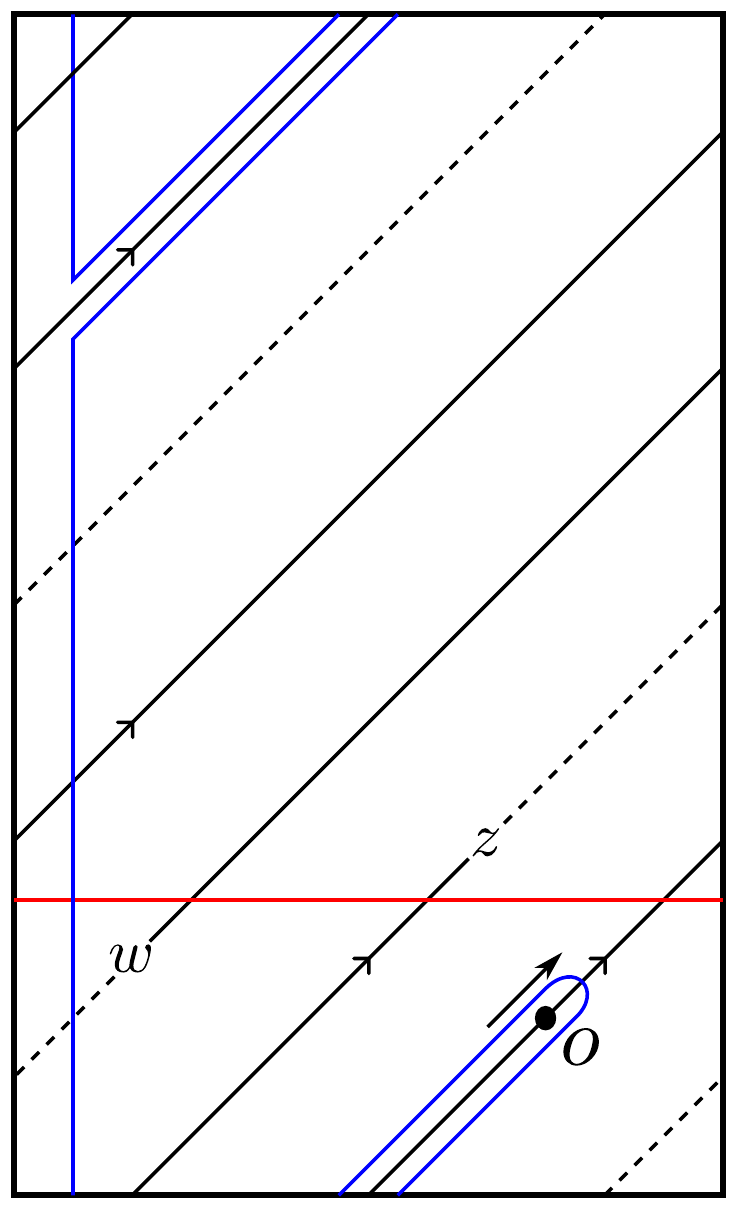}
\hspace{10mm}
\includegraphics[scale=0.3]{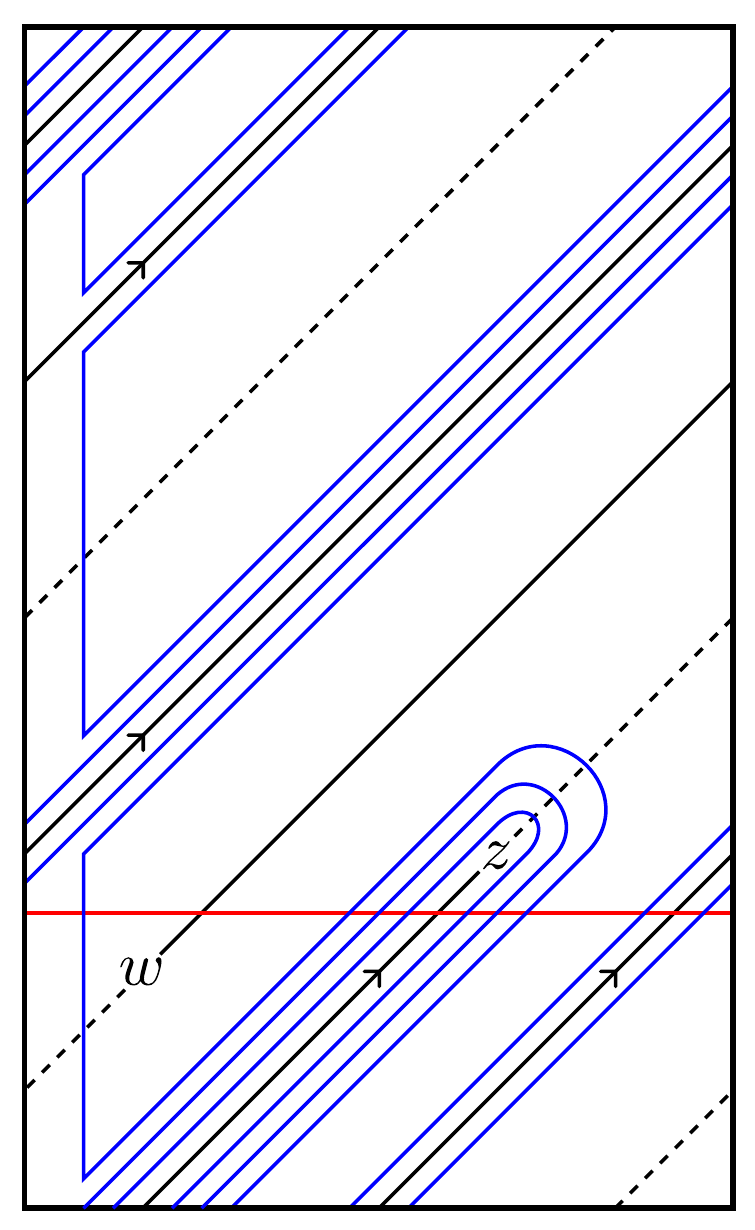}
\caption{(Left) Moving $o$ from $w$ toward $z$. (Right) The point $o$ reaches $z$. 
$k_\beta$ is represented by the solid line.}\label{diagram_T35_3_4}
\end{figure}

Thus, $(\Sigma;\alpha,\beta;z,w)$ is a $(1,1)$--diagram of the torus knot $T_{p,q}=k_\alpha\cup k_\beta$ after pushing $k_\alpha$ and $k_\beta$ slightly off from $\Sigma$. 
However, we place these two arcs $k_\alpha$ and $k_\beta$ on $\Sigma$ in our argument, 
and call it the $(1,1)$--diagram of $T_{p,q}$. 
It should be emphasized that 
$k_\alpha$ (resp. $k_\beta$) is disjoint from the curve $\alpha$ (resp. $\beta$). 
In general, this diagram is non-reduced, that is, there exists a bigon cobounded by $\alpha$ and $\beta$ that does not contain $z$ or $w$. 
So, move $\beta$ further to eliminate such bigons. 
See Figure \ref{diagram_T35_5_6}. 
As a result, we get a reduced $(1,1)$--diagram. 
Note again that $T_{p,q}=k_\alpha\cup k_\beta$ is located on the torus $\Sigma$ and $k_\alpha\cap k_\beta=\{z,w\}$. 

\begin{figure}
\centering
\includegraphics[scale=0.3]{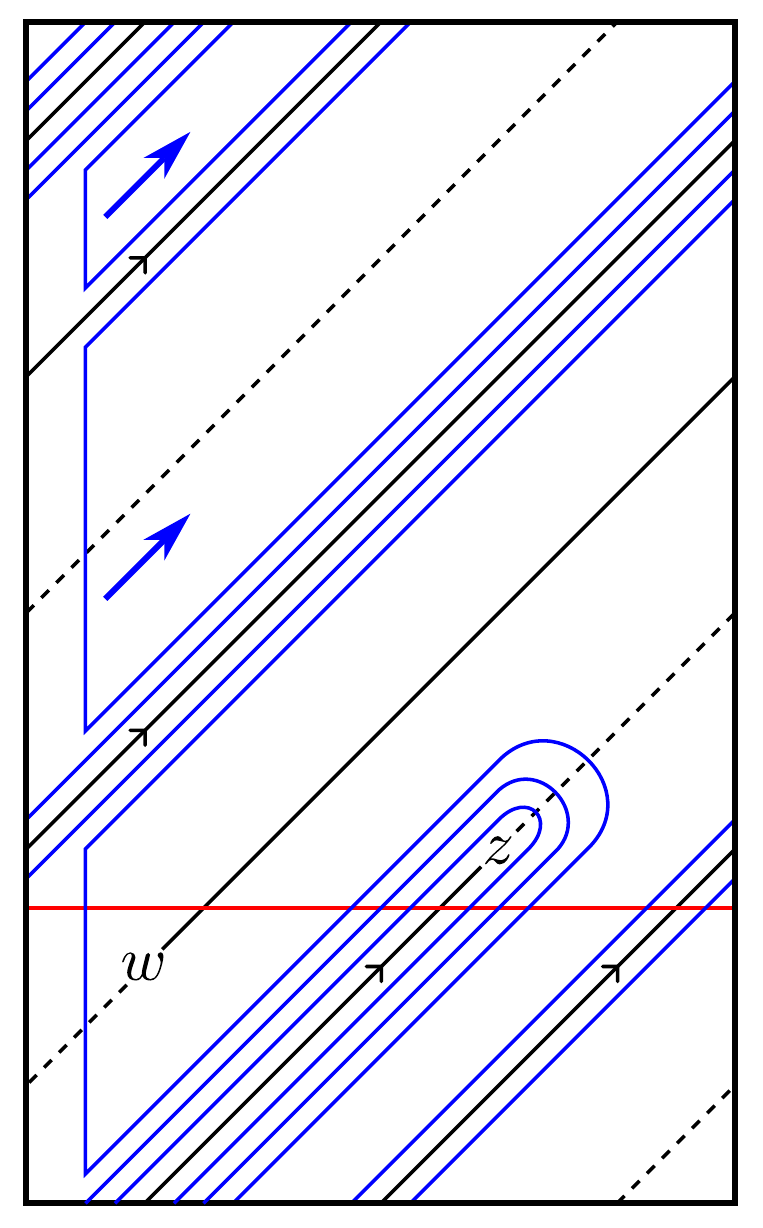}
\hspace{10mm}
\includegraphics[scale=0.3]{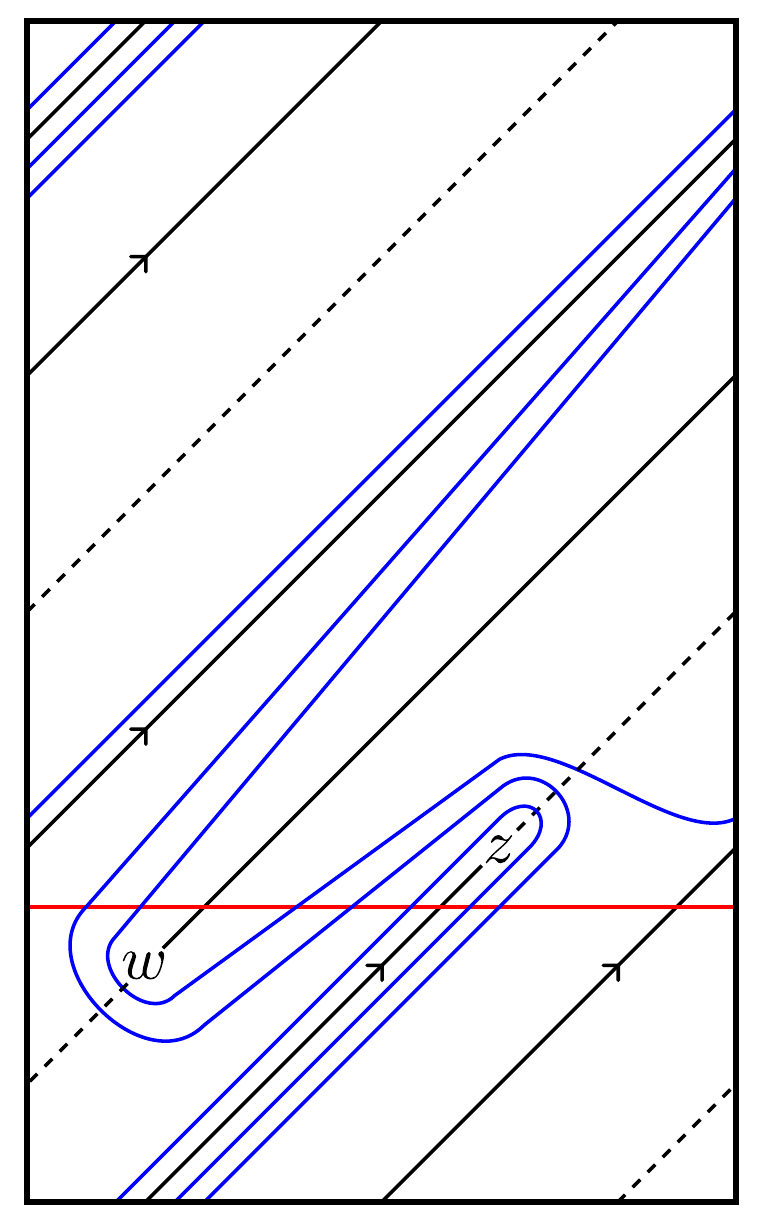}
\caption{The curve $\beta$ in the left diagram moves according to the blue arrows, resulting in the right diagram that is a reduced $(1,1)$--diagram.}\label{diagram_T35_5_6}
\end{figure}

When $\beta$ is cut by $\alpha$, there are two types of arcs; an arc that goes around in the longitudinal direction on $\Sigma$ is called a {\it straight arc\/}, while other arcs are called {\it rainbow arcs\/} (see Figure \ref{arc_orient}). 
Both type of arcs are referred to as $\beta$--arcs. 
In \cite{GLV18}, they state that a reduced $(1,1)$--diagram $(\Sigma;\alpha,\beta;z,w)$ presents an $L$--space knot if and only if there exist orientations on $\alpha$ and $\beta$ that induce coherent orientations on the boundary of every bigon $(D,\partial D)\subset (\Sigma,\alpha\cup\beta)$. 
Therefore, we can orient $\alpha$ and the rainbow arcs of the $(1,1)$--diagram of $T_{p,q}$ as shown in Figure \ref{arc_orient}, since a positive torus knot is an $L$--space knot. 
From now on, we will consider only the $(1,1)$--diagram $(\Sigma;\alpha,\beta;z,w)$ of $T_{p,q}=k_\alpha\cup k_\beta$ obtained in this way. 

\begin{figure}
\centering
\includegraphics[scale=0.4]{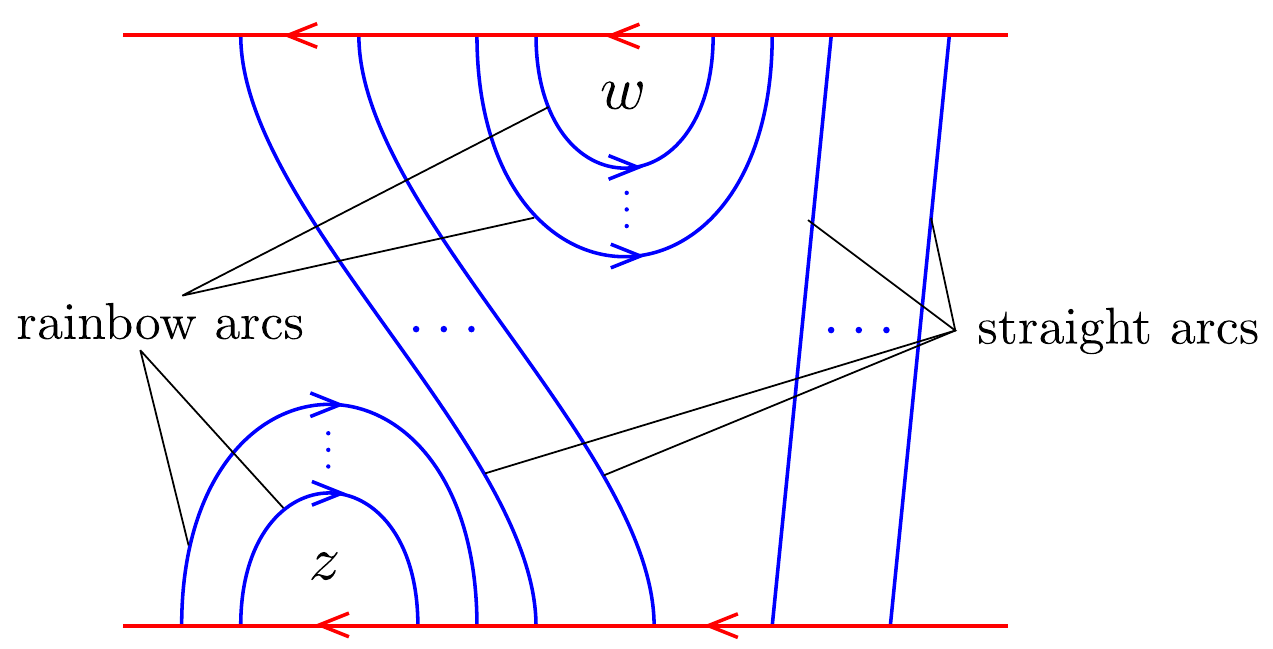}
\caption{Orient $\alpha$ (with its copy) and the rainbow arcs.}\label{arc_orient}
\end{figure}

In the following figures, the red (resp. blue, dashed, solid) line represents $\alpha$ (resp. $\beta$, $k_\alpha$, $k_\beta$). 

\subsection{Regions}\label{Regions}

The position of $T_{p,q}=k_\alpha\cup k_\beta$ in the $(1,1)$--diagram will be important later. 
By cutting the underlying surface $\Sigma$ along $\alpha$ and $\beta$, we obtain the connected components, called {\it regions\/}, like those in Figure \ref{region} (not all of them will necessarily appear). 
Each region is cobounded by $\beta$--arcs and some arcs derived from $\alpha$, where the latter ones are called $\alpha$--arcs. 
We assign names to some regions; 
The region of $(6)$ and $(7)$ are the {\it $A$--\/} and {\it $Y$--regions\/}, respectively. 
Also, the region of $(8)$ is the {\it $H$--region\/}. 
When the $H$--region exists, there is neither $A$-- nor $Y$--region, and when the $H$--region is absent, there is exactly one each in $A$-- and $Y$--region.

\begin{figure}
\centering
\includegraphics[scale=0.35]{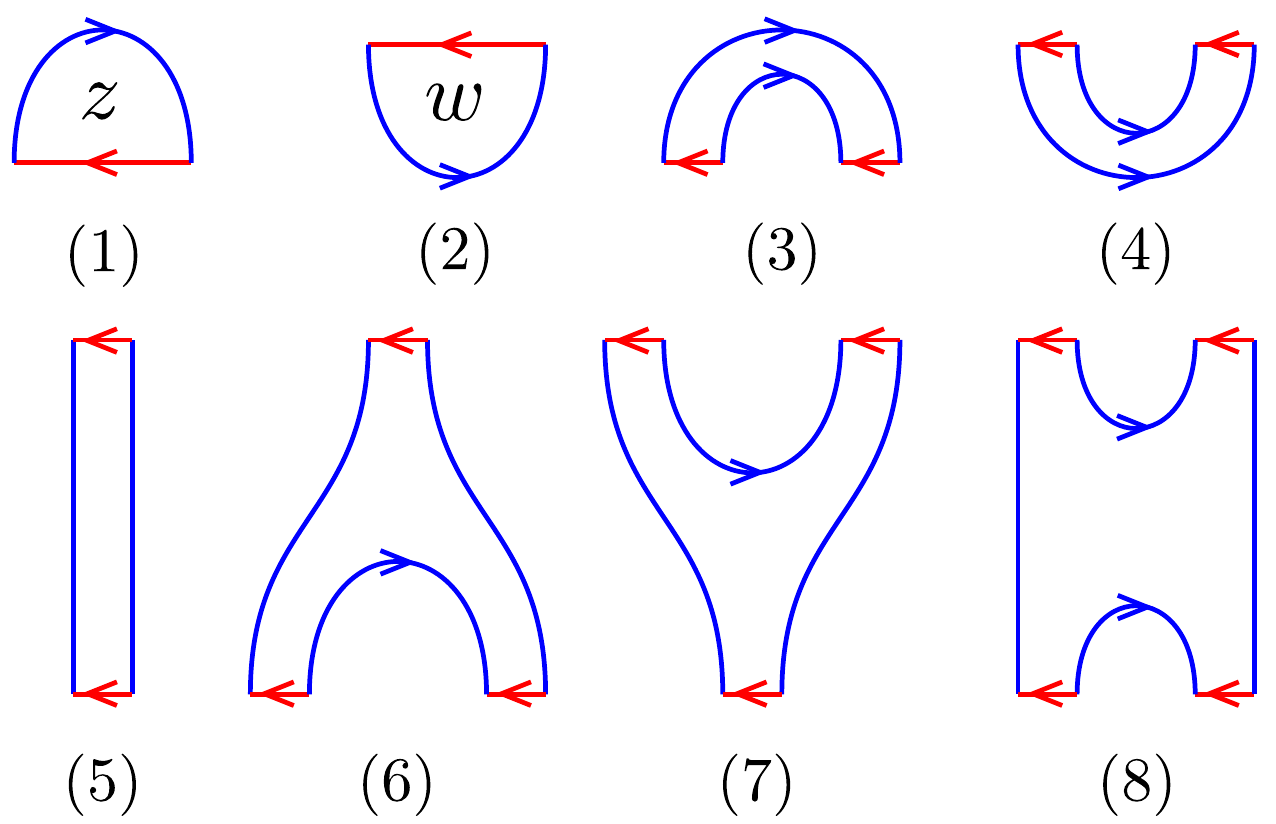}
\caption{Connected components that appear when $\Sigma$ is cut by $\alpha$ and $\beta$. 
If the orientation of an arc is determined, it is drawn by an arrow.}\label{region}
\end{figure}

\begin{figure}
\centering
\includegraphics[scale=0.35]{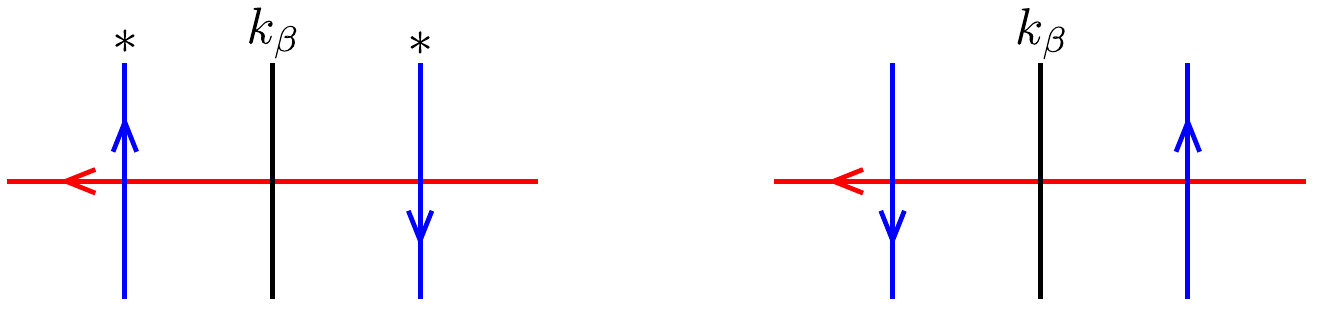}
\caption{Two adjacent $\beta$--arcs on an $\alpha$--arc with opposite orientation. }\label{fig_lem_opposite_edge}
\end{figure}

\begin{lemma}\label{lem_opposite_edge}
If two adjacent $\beta$--arcs on an $\alpha$--arc have opposite orientations, then $k_\beta$ passes through the space between them at least once (see Figure \ref{fig_lem_opposite_edge}).
\end{lemma}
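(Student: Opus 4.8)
The plan is to restate the hypothesis geometrically inside the annulus $\Sigma\setminus\beta$ and then play it against two facts: that our $(1,1)$--diagram is reduced, and that $k_\beta$ is disjoint from $\beta$. Write $A=\Sigma\setminus\beta$, an annulus whose two boundary circles are the two sides of $\beta$; since $k_\beta\cap\beta=\emptyset$ the whole arc $k_\beta$ lies in $A$, and each $\alpha$--arc of the diagram is a properly embedded arc of $A$ with its two endpoints on $\partial A$. The first step is the bookkeeping observation that, under the orientations of $\alpha$ and of the rainbow arcs fixed by the Greene--Lewallen--Vafaee coherence condition, \emph{the two $\beta$--arcs adjacent to an $\alpha$--arc $a$ have opposite orientations exactly when $a$ is an inessential (boundary--parallel) arc of $A$}, i.e.\ when both endpoints of $a$ lie on the same boundary circle of $A$. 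The point is that the side of $\beta$ on which $\alpha$ runs just before and just after each point of $\alpha\cap\beta$ is recorded by the sign of that intersection, and this sign is exactly what the orientation of the adjacent $\beta$--arc detects (this is the familiar ``opposite corner signs of a bigon'' phenomenon); I would verify it against each local model in Figure~\ref{region} and the conventions of Figure~\ref{arc_orient}.

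Granting this, assume $a$ is inessential in $A$. It cuts off a disk $D\subset A$ with $\partial D=a\cup c$, where $c$ is a sub-arc of a boundary circle of $A$, hence a union of $\beta$--arcs; moreover every $\alpha$--arc meeting $\mathrm{int}(D)$ must, for the same reason, have both endpoints on $c$, so among these there is an innermost one $a^*$, cutting off a disk $D^*\subseteq D$ with $\partial D^*=a^*\cup c^*$ for a single $\beta$--arc $c^*$ and with no $\alpha$-- or $\beta$--arc in its interior; thus $D^*$ is a bigon. (If $D$ itself has no $\alpha$--arc in its interior, take $a^*=a$, $D^*=D$.) Since the diagram is reduced, $D^*$ contains $z$ or $w$; as $z,w$ lie on $k_\beta\subset A$ and lie on neither $\alpha$ nor $\beta$, this gives a point of $k_\beta$ in $\mathrm{int}(D^*)$. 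On the other hand $T_{p,q}=k_\alpha\cup k_\beta$ is essential on $\Sigma$ while $k_\alpha$ is a single short strand, so $k_\beta$ is an essential arc of $A$ and cannot be contained in the disk $D^*$; it therefore leaves $D^*$ through $\partial D^*=a^*\cup c^*$, and being disjoint from $c^*\subset\beta$ it must cross $a^*$. Finally, this puts a point of $k_\beta$ in $\mathrm{int}(D)$ while $k_\beta\not\subset D$, so $k_\beta$ crosses $\partial D=a\cup c$, and again $k_\beta$ is disjoint from $c\subset\beta$, hence $k_\beta$ crosses $a$ — that is, $k_\beta$ passes through the space between the two adjacent $\beta$--arcs.

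The step I expect to be the main obstacle is the bookkeeping claim of the first paragraph. The delicate point is that the orientations in play come from the external coherence condition for $L$--space $(1,1)$--diagrams, not from anything intrinsic to $A$, so one must trace carefully how the orientation of a $\beta$--arc at an endpoint of $a$ pins down the side of $\beta$ that $\alpha$ occupies there, and confirm this in every configuration of Figure~\ref{region}, keeping track of which side of $a$ the relevant region lies on. A secondary point to be pinned down is that $k_\beta$ is essential in $A$: this should follow from $T_{p,q}$ winding around $\Sigma$ together with the fact that the construction makes $k_\alpha$ one short strand, but it must be checked against the precise placement of $z$ and $w$.
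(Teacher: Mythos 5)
Your route is genuinely different from the paper's (which argues by following the two oppositely oriented strands upward through $Y$--regions until they close up in a rainbow arc over $z$, using the region catalogue of Figure \ref{region}), and most of it is sound. In fact the step you flag as the main obstacle is not an obstacle at all: the equivalence ``the two adjacent $\beta$--arcs cross $\alpha$ with opposite orientations $\Leftrightarrow$ the $\alpha$--arc $a$ between them has both endpoints on the same side of $\beta$, i.e.\ is boundary--parallel in $A=\Sigma\setminus\beta$'' is the standard sign-versus-side bookkeeping and needs no input from the Greene--Lewallen--Vafaee coherence condition. Likewise the innermost argument is fine: trapped $\alpha$--arcs in $D$ are properly embedded with endpoints on $c$, an innermost one bounds a disk $D^*$ whose interior misses $\alpha\cup\beta$ (it misses $\beta$ automatically since $D^*\subset A$), so $D^*$ is a bigon of the reduced diagram and must contain $z$ or $w$.

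The genuine gap is the escape step, i.e.\ the claim that $k_\beta$ is not contained in $D^*$, and, what your last sentence actually needs, not contained in $D$. As stated the justification does not work: $k_\beta$ is not properly embedded in $A$ (both endpoints $z,w$ lie in $\mathrm{int}(A)$), so ``essential arc of $A$'' has no meaning; $k_\alpha$ is not a short strand but an essential arc wrapping once around $\Sigma$ in the longitudinal direction (see Figure \ref{diagram_T35_1_2}); and essentiality of $T_{p,q}=k_\alpha\cup k_\beta$ on $\Sigma$ does not by itself forbid $k_\beta\subset D$, because every intersection of the knot with $\beta$ lies on $k_\alpha$, so $k_\alpha$ alone can carry the homology class while $k_\beta$ sits in a disk of $\Sigma\setminus\beta$; even the nonzero count $K\cdot\alpha$ is not an obstruction, since an arc inside $D$ can have large signed intersection with the trapped $\alpha$--arcs separating $z$ from $w$ inside $D$. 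Ruling out $k_\beta\subset D$ therefore requires structural information about the constructed diagram --- e.g.\ that $z$ and $w$ sit adjacent to $\alpha$ at different places, with $k_\beta$ crossing $\alpha$ immediately at both of its ends, or the kind of region-by-region control the paper extracts from the coherent orientations --- and this is exactly where the content of the lemma lives. Until that non-containment is proved (you yourself defer it to ``the precise placement of $z$ and $w$''), the argument is incomplete; with it supplied, your annulus/innermost-bigon argument would give a clean alternative to the paper's induction through $Y$--regions.
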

\begin{proof}
We prove only the situation on the left of Figure \ref{fig_lem_opposite_edge}, and the remaining case can be shown in the same manner. 

If the two $\beta$--arcs are connected by a rainbow arc over $z$ in the direction of $*$, we are done. 
If not, only a $Y$--region can connect those in the direction of $*$, because of the orientation of the rainbow arcs. Then, the orientation of $\beta$--arcs at the two $\alpha$--arcs on the top of the $Y$-region is the same situation as the left of Figure \ref{fig_lem_opposite_edge}. 
Hence, these $\beta$--arcs on the top of the $Y$--region are connected by rainbow arcs over $z$, so we have the conclusion.
\end{proof}

Note that both of the $A$-- and $Y$--regions, if they exist, meet $k_\alpha$, because $k_\alpha$ connects the two points $z$ and $w$, and cannot meet $\alpha$. Similarly for the $H$--region.

\begin{figure}
\centering
\includegraphics[scale=0.35]{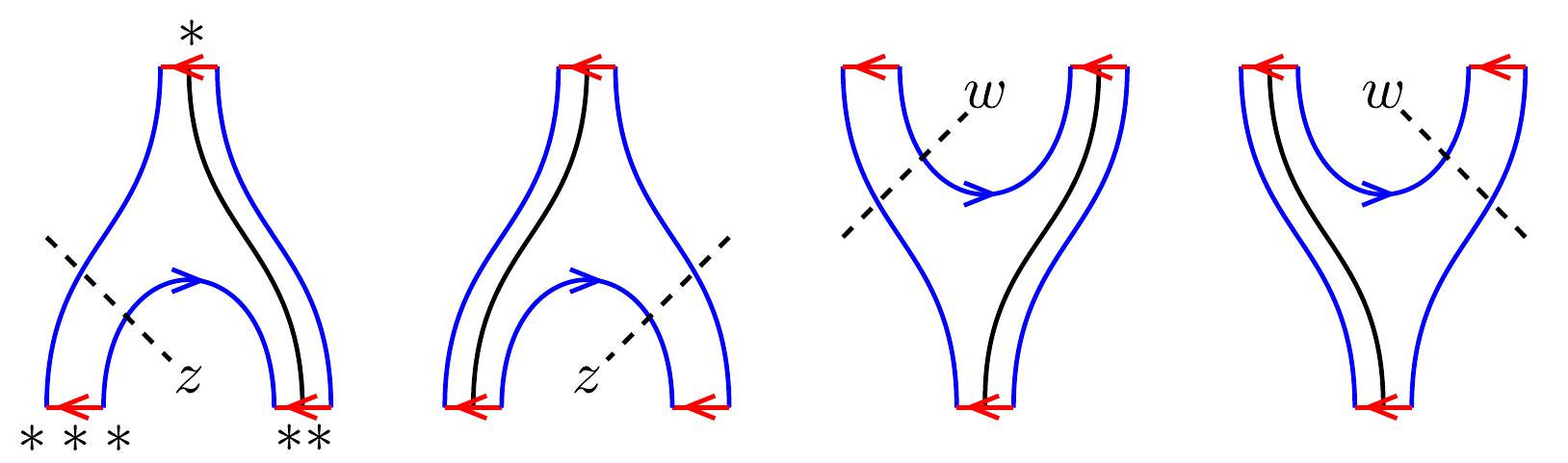}
\caption{The $A$-- and $Y$--regions with $k_\alpha$. $k_\alpha$ and $k_\beta$ are represented by the dashed line and the solid line, respectively.}\label{fig_lem_Y}
\end{figure}

\begin{lemma}\label{lem_Y}
For the $A$-- or $Y$--region with $k_\alpha$ as shown in Figure \ref{fig_lem_Y}, $k_\beta$ passes through it as illustrated there.
\end{lemma}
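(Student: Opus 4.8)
The plan is to trace the arc $k_\beta$ starting from whichever of $z,w$ lies in the $A$-- or $Y$--region, and to show that the constraints on $k_\beta$ force it into the configuration drawn in Figure \ref{fig_lem_Y}. Recall that $z$ and $w$ are the common endpoints of $k_\alpha$ and $k_\beta$, so wherever an end of $k_\alpha$ sits inside the region, a strand of $k_\beta$ issues from the same point; Figure \ref{fig_lem_Y} records how $k_\alpha$ runs, and the task is to pin down the remaining strand $k_\beta$.

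First I would list the local constraints on $k_\beta$. (i) Since $k_\beta$ is disjoint from $\beta$, inside any region $k_\beta$ runs from one boundary $\alpha$--arc to another, or from such an arc to $z$ or $w$, and never meets a $\beta$--arc. (ii) Because $T_{p,q}=k_\alpha\cup k_\beta$ lies on $\Sigma$ in standard position, it crosses every meridian, hence every lift of $\alpha$, transversely and always with the same sign; as $k_\alpha$ is disjoint from $\alpha$, all of these crossings are crossings of $k_\beta$, so the strands of $k_\beta$ cross the $\alpha$--arcs bounding the region coherently and cannot reverse direction. (iii) The rainbow arcs carry the orientations of Figure \ref{arc_orient}, so the $\beta$--arcs adjacent to each $\alpha$--arc follow a prescribed orientation pattern, and Lemma \ref{lem_opposite_edge} applies wherever two adjacent $\beta$--arcs on an $\alpha$--arc point oppositely.

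Then I would run a short case analysis on the shape of the region (cf. Figure \ref{region}). For the $Y$--region, the two $\alpha$--arcs at its top carry $\beta$--arcs of opposite orientation, so Lemma \ref{lem_opposite_edge} forces a strand of $k_\beta$ through the slot between them; following that strand using (i), and using the fixed orientations of the rainbow $\beta$--arcs on the sides of the region together with (ii), one sees it must leave through the remaining $\alpha$--arc exactly as in Figure \ref{fig_lem_Y}, with the strand issuing from $z$ (or $w$) joining up with it. The $A$--region is handled in the same manner.

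The hard part will be showing this case analysis is exhaustive: ruling out that a strand of $k_\beta$ enters the region and exits through the same $\alpha$--arc it entered, or that several strands of $k_\beta$ thread the region. Both are excluded by the coherent--crossing property (ii) together with disjointness from $\beta$ in (i), but making the argument watertight requires matching the orientations in Figure \ref{region} and Figure \ref{fig_lem_Y} precisely, and this is the point at which the hypothesis that the torus knot itself lies on $\Sigma$ does the essential work.
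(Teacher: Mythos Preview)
Your overall strategy is close to the paper's, but there is a genuine gap in the step where you invoke Lemma~\ref{lem_opposite_edge}. You assert that for the $Y$--region ``the two $\alpha$--arcs at its top carry $\beta$--arcs of opposite orientation,'' and then apply Lemma~\ref{lem_opposite_edge} to that slot. This is not automatic. Only the rainbow arc bounding the region has a prescribed orientation from Figure~\ref{arc_orient}; the two straight $\beta$--arcs forming the legs of the $Y$ do not, so whether a given pair of adjacent $\beta$--arcs on one of the $\alpha$--arcs is oppositely oriented depends on those unknown orientations. The paper's proof handles this by an explicit case split on the orientations of the right and left straight arcs: in two of the three cases one of the slots (marked $*$ or $**$ in Figure~\ref{fig_lem_Y}) has opposite orientations and Lemma~\ref{lem_opposite_edge} yields the desired strand of $k_\beta$; in the remaining case Lemma~\ref{lem_opposite_edge} applies instead to the slot $*\!*\!*$, and the resulting strand would have to cross $k_\alpha$, contradicting $k_\alpha\cap k_\beta=\{z,w\}$, so that case is impossible. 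Your plan omits this trichotomy, and without it the appeal to Lemma~\ref{lem_opposite_edge} is unjustified.

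Your constraint (ii) --- that $k_\beta$ crosses $\alpha$ coherently because $T_{p,q}$ sits in standard position --- is a correct and pleasant observation, but note that the paper's proof does not use it; the contradiction in the third case comes solely from the disjointness of the interiors of $k_\alpha$ and $k_\beta$. If you want (ii) to replace the case split rather than merely rule out extraneous strands afterwards, you would need to argue directly from coherence that the strand of $k_\beta$ issuing from the basepoint is forced through the correct $\alpha$--arc, without appealing to Lemma~\ref{lem_opposite_edge} at all. That is a different argument from what you wrote, and it would need to be made precise to stand on its own.
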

\begin{proof}
We prove only the left-most case of Figure \ref{fig_lem_Y}, and the remaining cases can be shown in a similar manner. 

First, if the right straight arc is oriented upward, we are done by applying Lemma \ref{lem_opposite_edge} to $**$. Assume that the right straight arc is oriented downward. 
If the left straight arc is oriented upward, then we are done by applying Lemma \ref{lem_opposite_edge} to $*$. Lastly, if the left straight arc is oriented downward, we can apply Lemma \ref{lem_opposite_edge} to $*\!*\!*$. However, this contradicts the fact that $k_\alpha$ and $k_\beta$ do not intersect.
\end{proof}

\begin{figure}
\centering
\includegraphics[scale=0.35]{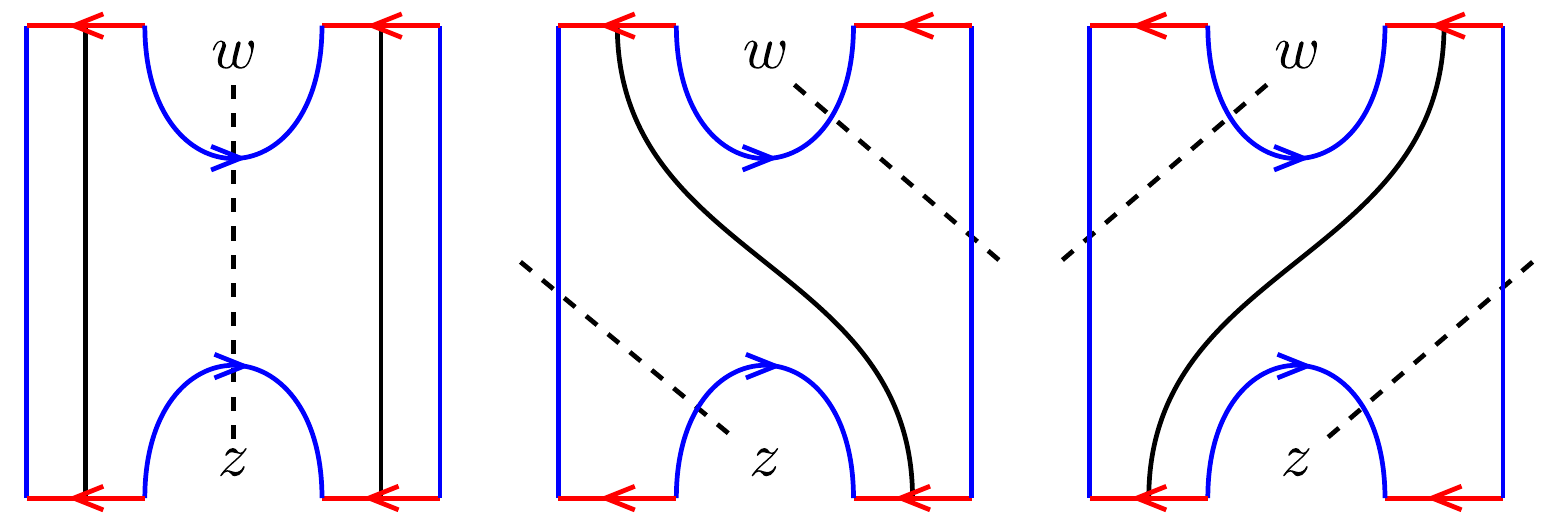}
\caption{The $H$--region with $k_\alpha$.}\label{fig_lem_H}
\end{figure}

\begin{lemma}\label{lem_H}
For the $H$--region with $k_\alpha$ as shown in Figure \ref{fig_lem_H}, $k_\beta$ passes through it as illustrated there.
\end{lemma}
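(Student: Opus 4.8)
The plan is to follow the template of the proof of Lemma \ref{lem_Y}: reduce to a single representative picture by symmetry, then run a finite case analysis on the orientations of the straight arcs bounding the $H$--region, using Lemma \ref{lem_opposite_edge} to force strands of $k_\beta$ through prescribed gaps and using $k_\alpha\cap k_\beta=\{z,w\}$ to discard every configuration that would make $k_\alpha$ and $k_\beta$ cross. Concretely, I would first fix the left-most picture in Figure \ref{fig_lem_H} as the model case; the other pictures there are obtained by the evident reflections and by reversing all orientations (equivalently, swapping the roles of $z$ and $w$), so the argument transfers verbatim. In the model case the $H$--region is cobounded by two straight arcs on each side together with the rainbow arcs on top, and $k_\alpha$ enters it in the position drawn.

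Next I would analyze the orientations of these straight arcs, starting from the outermost pair and working inward, exactly as in Lemma \ref{lem_Y} where one splits on the right straight arc being oriented upward or downward and then on the left one. Because the rainbow arcs carry the fixed orientations of Figure \ref{arc_orient}, the orientation of a straight arc that meets a rainbow arc is constrained, so only finitely many admissible orientation patterns remain. For each admissible pattern I would locate an adjacent pair of $\beta$--arcs on a common $\alpha$--arc with opposite orientations; Lemma \ref{lem_opposite_edge} then forces $k_\beta$ to run through the gap between them, in the position marked by the relevant $*$'s in the figure. Comparing this forced strand of $k_\beta$ with the position of $k_\alpha$ inside the $H$--region (which is nonempty there, since $k_\alpha$ joins $z$ and $w$ and cannot meet $\alpha$), the patterns in which the forced strand would have to cross $k_\alpha$ are impossible; the surviving pattern is precisely the one in which $k_\beta$ traverses the $H$--region as illustrated in Figure \ref{fig_lem_H}.

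The main obstacle is that this is essentially a bookkeeping argument and the $H$--region is ``wider'' than the $A$-- and $Y$--regions, so there are more straight arcs and correspondingly more orientation patterns to enumerate, and one must keep careful track of which side of $k_\alpha$ each forced strand of $k_\beta$ lies on. There is no new idea beyond Lemma \ref{lem_opposite_edge} and the non-intersection of $k_\alpha$ and $k_\beta$; the content is in verifying that every admissible pattern either reproduces Figure \ref{fig_lem_H} or contradicts one of these two facts, and this is the step I would write out most carefully.
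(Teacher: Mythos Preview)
Your proposal is correct and follows exactly the approach the paper takes: the paper's own proof is the single sentence ``Just as in the proof of Lemma~\ref{lem_Y}, it can be shown using Lemma~\ref{lem_opposite_edge},'' and your plan is a faithful (and more detailed) unpacking of that sentence. There is no additional idea hidden in the paper's argument beyond the case analysis on the straight-arc orientations, the application of Lemma~\ref{lem_opposite_edge}, and the elimination of configurations where $k_\beta$ would cross $k_\alpha$.
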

\begin{proof}
Just as in the proof of Lemma \ref{lem_Y}, it can be shown using Lemma \ref{lem_opposite_edge}.
\end{proof}

\subsection{The pinch move and the $(1,1)$--diagram}
In this subsection, we give an observation of a change of the $(1,1)$--diagram $(\Sigma;\alpha,\beta;z,w)$ of $T_{p,q}=k_\alpha\cup k_\beta$ by operating the pinch move. 
In conclusion, it can be seen that $\alpha$ and $\beta$ remain stationary while only the two points $z$ and $w$ move. 

The neighborhood of $\alpha$ in the $(1,1)$--diagram looks like Figure \ref{pinch_diagram_1}. 
First, move $z$ along $k_\beta$ to the string with $*$ as in Figure \ref{pinch_diagram_1}, which is the ``right" of the line with $w$. 
(If there is no such string, then we do not move $z$.) 
In doing so, $\alpha$ should move in such a way that it does not touch $z$, and $k_\alpha$ should be extended, see Figure \ref{pinch_diagram_2}.

\begin{figure}
\centering
\includegraphics[scale=0.4]{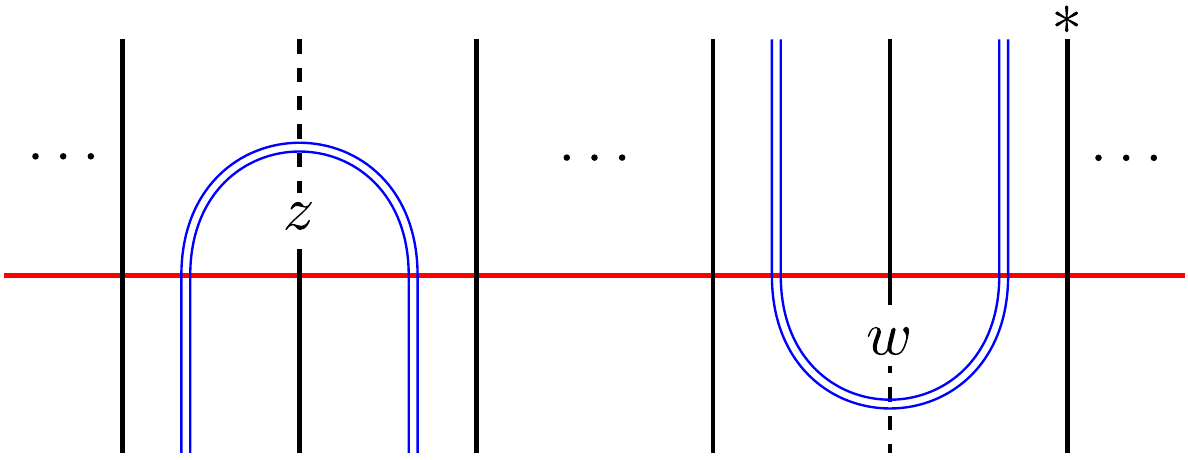}
\caption{The neighborhood of $\alpha$ in the $(1,1)$--diagram. 
Double lines represent several parallel lines (including $0$ or $1$ line). 
The straight $\beta$--arcs are actually present, but they have been omitted. }\label{pinch_diagram_1}
\end{figure}

\begin{figure}
\centering
\includegraphics[scale=0.4]{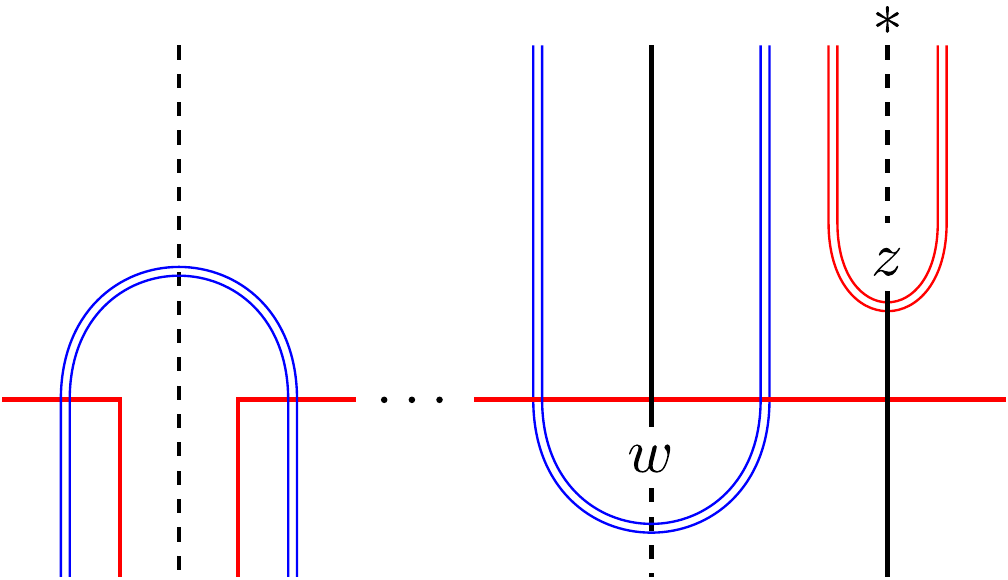}
\caption{Move $z$ to the line with $*$. During this process, $z$ may push $\alpha$ several times.}\label{pinch_diagram_2}
\end{figure}

Second, we perform the pinch move on $T_{p,q}$ along the shaded band in the left of Figure \ref{pinch_diagram_3}. Then, we newly put two points $z'$ and $w'$ on the resulting knot, and set $k'_\alpha$ and $k'_\beta$ as indicated in Figure \ref{pinch_diagram_3}. 
Also, isotope $\alpha$ to $\alpha'$ as shown there, and set $\beta'=\beta$.

\begin{figure}
\centering
\includegraphics[scale=0.35]{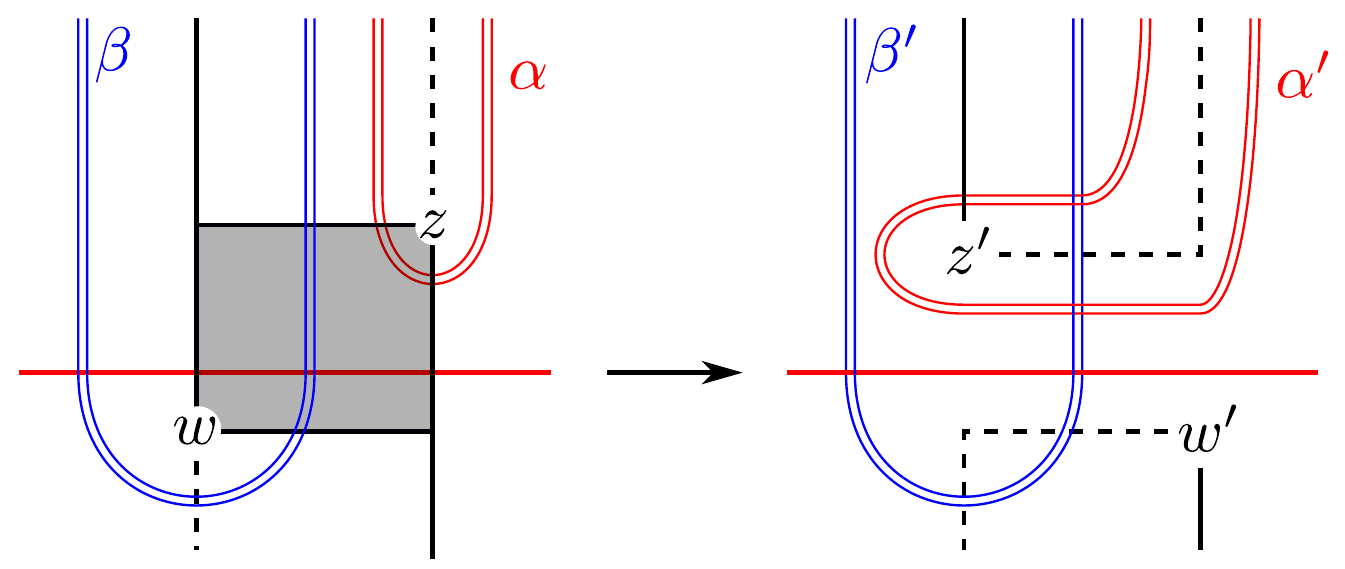}
\caption{The pinch move changes the situation from left to right. The dashed line is $k'_\alpha$ and the solid line $k'_\beta$.}\label{pinch_diagram_3}
\end{figure}

\begin{lemma}
The tuple $(\Sigma;\alpha',\beta';z',w')$ is a (non-reduced) $(1,1)$--diagram of a torus knot $T_{p',q'}=k'_\alpha\cup k'_\beta$, which is the knot obtained by the pinch move on $T_{p,q}$.
\end{lemma}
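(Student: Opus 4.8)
The plan is to verify directly the three defining properties of a $(1,1)$--diagram for the tuple $(\Sigma;\alpha',\beta';z',w')$, and then to check that the knot $k'_\alpha\cup k'_\beta$ it carries is precisely the one produced by the pinch move on $T_{p,q}$; once this last point is in hand, the fact that $k'_\alpha\cup k'_\beta$ is a torus knot is automatic, since the pinch move on a torus knot yields a torus knot. Throughout, the verification amounts to reading off the local pictures in Figures \ref{pinch_diagram_1}--\ref{pinch_diagram_3}; the one standing subtlety is that the ``double lines'' of Figure \ref{pinch_diagram_1} denote an a priori unknown number of parallel strands, so every assertion below must be made uniformly in that number.

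First I would check that $(\Sigma;\alpha',\beta')$ is again the standard genus one Heegaard diagram of $S^3$. By construction $\beta'=\beta$, while $\alpha'$ is obtained from $\alpha$ by a sequence of isotopies in $\Sigma$: the pushes performed while sliding $z$ onto the string marked $*$ (Figure \ref{pinch_diagram_2}), followed by the isotopy $\alpha\to\alpha'$ accompanying the band move (Figure \ref{pinch_diagram_3}). Hence $\alpha'$ is isotopic to $\alpha$ as a simple closed curve on the closed torus $\Sigma$, so $(\Sigma;\alpha',\beta')$ is homeomorphic to $(\Sigma;\alpha,\beta)$ and therefore is the standard genus one Heegaard diagram of $S^3$; also $z',w'$ are disjoint from $\alpha'\cup\beta'$ by inspection.

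Next I would verify that $k'_\alpha$ is a trivial arc in the inner solid torus disjoint from the meridian disk $D_{\alpha'}$, that $k'_\beta$ is a trivial arc in the outer solid torus disjoint from $D_{\beta'}$, and that $k'_\alpha\cap k'_\beta=\{z',w'\}$. For this it suffices to observe that $k'_\alpha$ is an embedded arc lying on $\Sigma$, running from $z'$ to $w'$, and disjoint from $\alpha'$: such an arc lies in the annulus $\Sigma\setminus\alpha'$, so after a small pushoff into the inner solid torus it cobounds an embedded disk with its shadow on $\Sigma$, hence is isotopic rel endpoints into $\Sigma$ and can be kept off $D_{\alpha'}$; symmetrically for $k'_\beta$ and $\beta'$. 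The required disjointness statements are exactly what the right-hand picture of Figure \ref{pinch_diagram_3} records: $k'_\alpha$ is kept on the $w$--side of $\alpha'$ -- this is precisely why $z$ was first slid onto the string $*$ and $\alpha$ pushed along with it -- while $k'_\beta$ coincides with $k_\beta$ away from the band region, where it already misses $\beta$, and its local modification near the band can be routed past the straight $\beta$--arcs crossing $\alpha$ there. Here Lemmas \ref{lem_opposite_edge}, \ref{lem_Y} and \ref{lem_H} are invoked to guarantee that the configuration of $k_\beta$ near $\alpha$ before the move is really the one drawn in Figure \ref{pinch_diagram_1}, so that this routing is always available.

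The main point, and the step I expect to require the most care, is the identification of $k'_\alpha\cup k'_\beta$ with the pinch--move image of $T_{p,q}$. The preliminary repositioning of $z$ only slides a marked point along the knot and pushes $\alpha$ out of its way, so it changes neither the knot type of $k_\alpha\cup k_\beta=T_{p,q}$ nor the status of the tuple as a $(1,1)$--diagram of it, and it leaves the neighborhood of $\alpha$ in the normal form of Figure \ref{pinch_diagram_1}, which by the lemmas above is exhaustive. The pinch move is the non--oriented band surgery along the shaded band of Figure \ref{pinch_diagram_3}, and this band meets $T_{p,q}$ only in that neighborhood; hence the surgered knot agrees with $k_\alpha\cup k_\beta$ outside the band and equals the rerouted pair of arcs inside it. A direct inspection of the local before/after pictures in Figure \ref{pinch_diagram_3} -- carried out uniformly in the number of parallel strands hidden in the double lines -- shows that this rerouting is exactly $k'_\alpha\cup k'_\beta$. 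Together with the known behaviour of the pinch move this gives $k'_\alpha\cup k'_\beta=T_{p',q'}$ for suitable $p',q'$, completing the proof.
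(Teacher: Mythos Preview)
Your approach—verify the defining conditions of a $(1,1)$--diagram directly from the local pictures—is exactly the paper's approach; the paper's proof is just the two–sentence version (``$\alpha'$ is isotopic to $\alpha$ and $\beta'=\beta$, so $(\Sigma;\alpha',\beta')$ is a Heegaard diagram of $S^3$; and $k'_\alpha,k'_\beta$, pushed off $\Sigma$, are trivial arcs missing the respective meridian disks'').

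Two of your additions are superfluous, however. Your ``main point''—identifying $k'_\alpha\cup k'_\beta$ with the pinch--move image—is tautological: in the construction immediately preceding the lemma the pinch move is performed \emph{first}, and then $z',w'$ are placed ``on the resulting knot'' with $k'_\alpha,k'_\beta$ partitioning it; so $k'_\alpha\cup k'_\beta$ equals the surgered knot by definition, and there is nothing to check. Second, your appeal to Lemmas~\ref{lem_opposite_edge}, \ref{lem_Y} and \ref{lem_H} is out of place. Those lemmas describe how $k_\beta$ threads through the $A$--, $Y$-- and $H$--\emph{regions} and are used only later, in the base--point counting of Section~5; they are not needed (and not used in the paper) to certify that Figure~\ref{pinch_diagram_1} is accurate, since that figure is a schematic whose double lines already allow an arbitrary number of parallel strands and whose caption explicitly says the straight $\beta$--arcs are omitted. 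The disjointness $k'_\beta\cap\beta'=\emptyset$ follows simply because, after sliding $z$ along $k_\beta$, the arc $k_\beta$ has only shrunk (so still misses $\beta$), and the band surgery and the choice of $k'_\beta$ in Figure~\ref{pinch_diagram_3} only trade a short piece near $\alpha$; no ``routing'' choice is being made.
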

\begin{proof}
Since $\alpha'$ is isotopic to $\alpha$ and $\beta'$ is equal to $\beta$ as a set, the tuple $(\Sigma;\alpha',\beta')$ is a Heegaard diagram of $S^3$. 
Moreover, $k'_\alpha$ and $k'_\beta$, after pushing off from $\Sigma$, are trivial arcs connecting $z',w'\in\Sigma$ in each handlebody, and they are disjoint from the meridian disks bounded by $\alpha'$ and $\beta'$, respectively. 
\end{proof}

Third, move $\alpha'$ and $k'_{\alpha}$ toward $k'_\beta$ as shown in Figures \ref{pinch_diagram_4_new1} and \ref{pinch_diagram_4_new2}. We stop when the tongue-shaped part of $\alpha'$ swallows $k'_\beta$.
(If the situation on the right of Figure \ref{pinch_diagram_4_new2} happens, we continue to move $\alpha'$ toward the original $k_\alpha$, which was above $w'$, as there. After that, the right of Figure \ref{pinch_diagram_4_new1} occurs, because of how to pass $k'_\alpha$ and $k'_\beta$. (see also the bottom of Figure \ref{pinch_diagram_5}). In this case, we stop then.)

\begin{figure}
\centering
\includegraphics[scale=0.35]{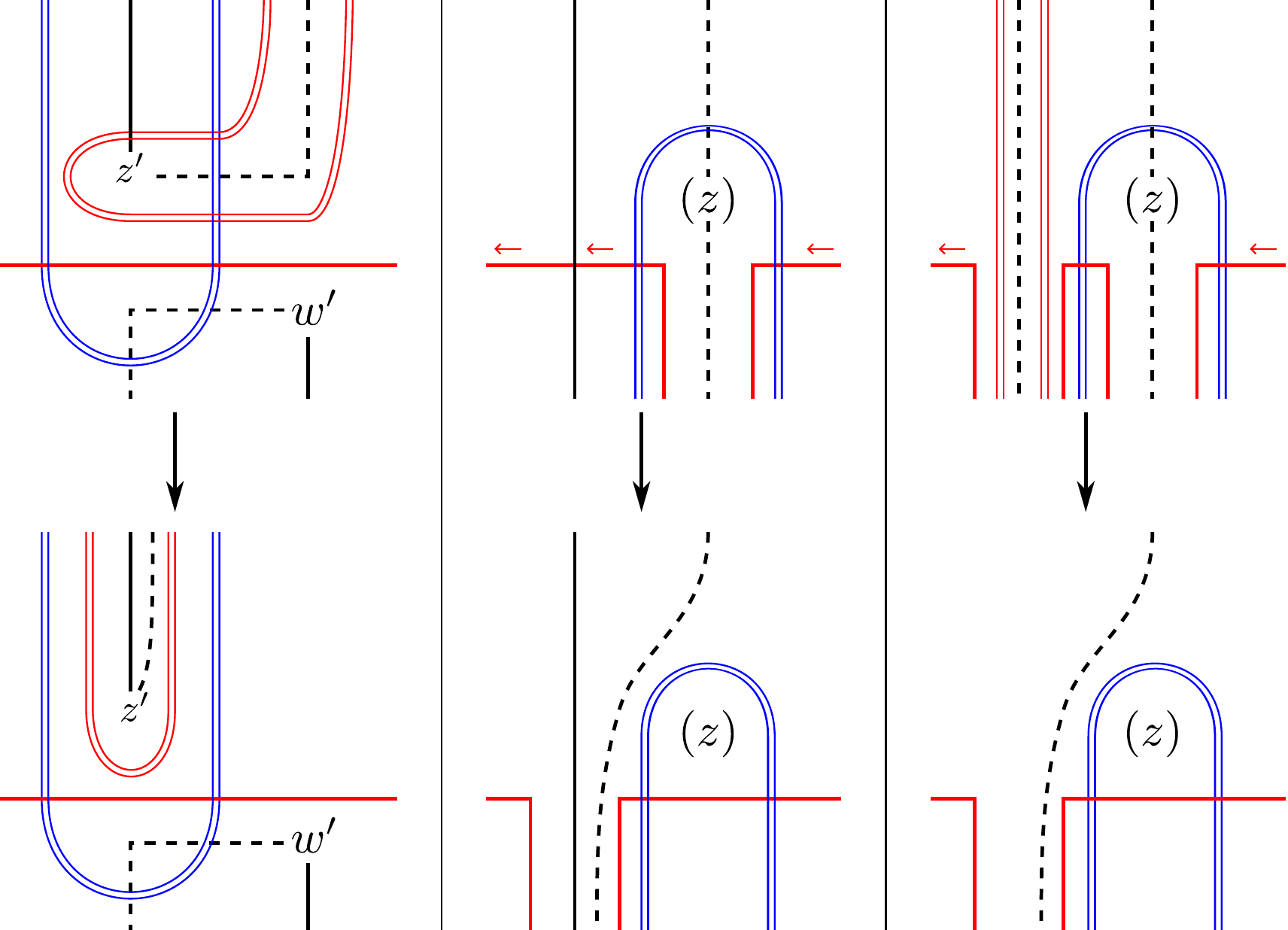}
\caption{Move $\alpha'$ and $k'_{\alpha}$. The situation of the neighborhood of the original $z$, indicated by $(z)$, is either the middle or the right.}\label{pinch_diagram_4_new1}
\end{figure}

\begin{figure}
\centering
\includegraphics[scale=0.35]{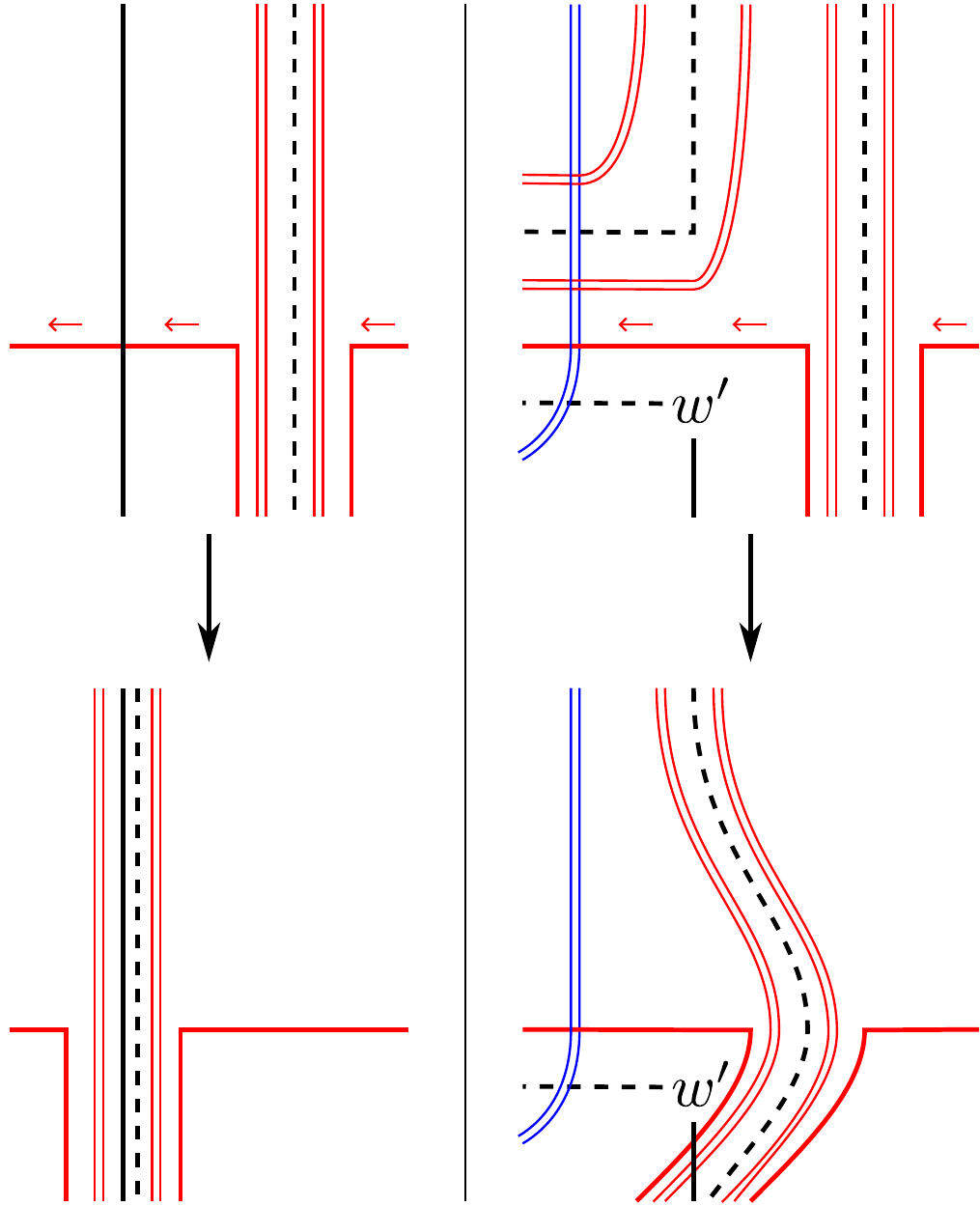}
\caption{During the move as shown in Figure \ref{pinch_diagram_4_new1}, the situation like the left happens in other places. Note that both curves cannot pass $w'$ as shown in the right one.}\label{pinch_diagram_4_new2}
\end{figure}

Finally, shrink the tongue-shaped part of $\alpha'$ to return $\alpha'$ to the standard position (the ``horizontal line"), and move $z'$, $k'_\alpha$ and $k'_\beta$ accordingly. 
See Figure \ref{pinch_diagram_5}. 
Note that since $k_\beta$ does not meet $\beta$, we can shrink $\alpha'$ without $z'$ touching $\beta$.
Then, recall the closest string of $k_\beta$ which is on the left of $z$ in the original diagram (before the pinch move). We see that the final position of $z'$ is on this string.

\begin{figure}
\centering
\includegraphics[scale=0.35]{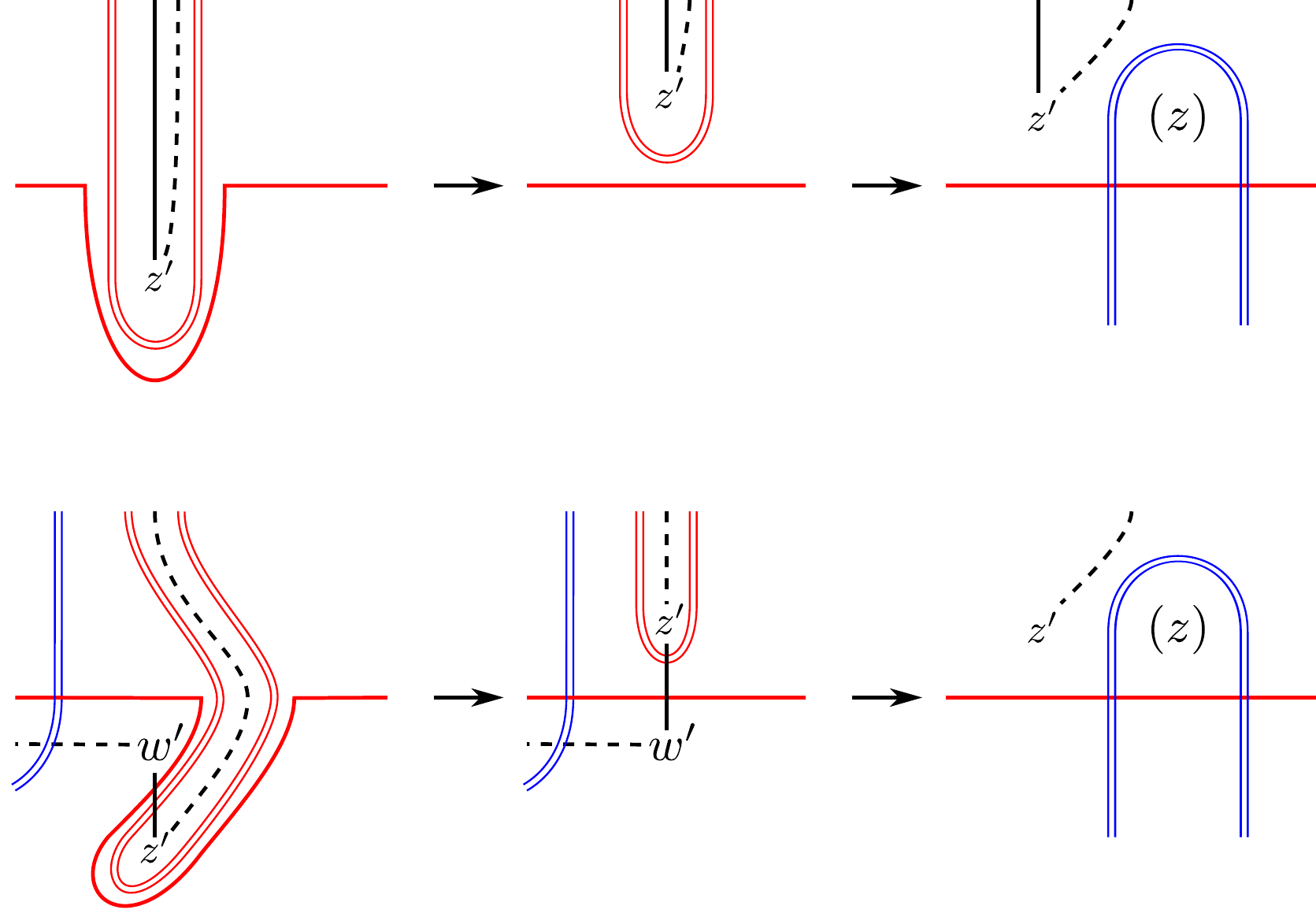}
\caption{Return $\alpha'$ and move $z'$, $k'_\alpha$ and $k'_\beta$ accordingly. 
(Top) The case where $z'$ does not pass by the side of $w'$. 
It stops when the situation on the right occurs. (Bottom) The case where $z'$ passes by the side of $w'$. }\label{pinch_diagram_5}
\end{figure}

As a result, the $(1,1)$--diagram looks like to have moved only the two points as shown in Figure \ref{pinch_diagram_6} by the pinch move. 
This movement of two points is also called the pinch move. 
Note that $\alpha$ and $\alpha'$ are equal as sets, and so are $\beta$ and $\beta'$. 

\begin{figure}
\centering
\includegraphics[scale=0.4]{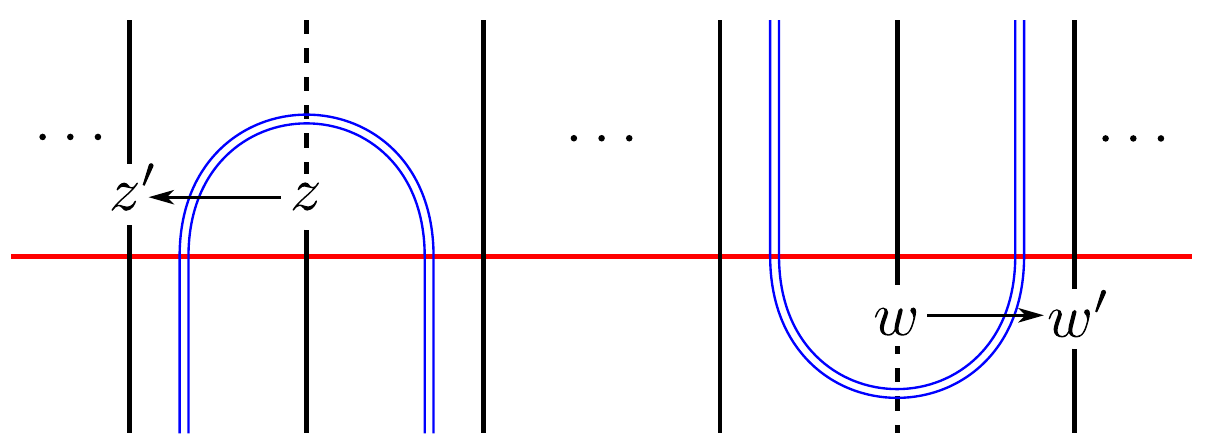}
\caption{The movement of two base points before and after the pinch move. 
This is also called the pinch move. 
Remark that the solid (resp. dashed) line represents $k_\beta$ (resp. $k_\alpha$) rather than $k'_\beta$ (resp. $k'_\alpha$). }\label{pinch_diagram_6}
\end{figure}

\begin{example}
We demonstrate the above process for $T_{5,7}$. 
The pinch move changes $T_{5,7}$ into the unknot $T_{1,1}$. 
See Figures \ref{ex_T57_1_2}, \ref{ex_T57_3_4} and \ref{ex_T57_5_6}. 
Figure \ref{ex_T57_5_6} (left) shows a (non-reduced) $(1,1)$--diagram of $T_{1,1}$. 
Here, $\alpha'$ and $\beta'$ are exactly equal to $\alpha$ and $\beta$, respectively.
However, the positions of the two base points $z'$ and $w'$ are different from those of $z$ and $w$. 
(Recall that the positions of the base points are sufficient to recover the knot.) 
In fact, their locations are specified at the original $(1,1)$--diagram of $T_{5,7}$ as shown in Figure \ref{ex_T57_5_6} (right). 
Thus we see that the pinch move as Figure \ref{pinch_diagram_6} happens.

\begin{figure}
\centering
\includegraphics[scale=0.3]{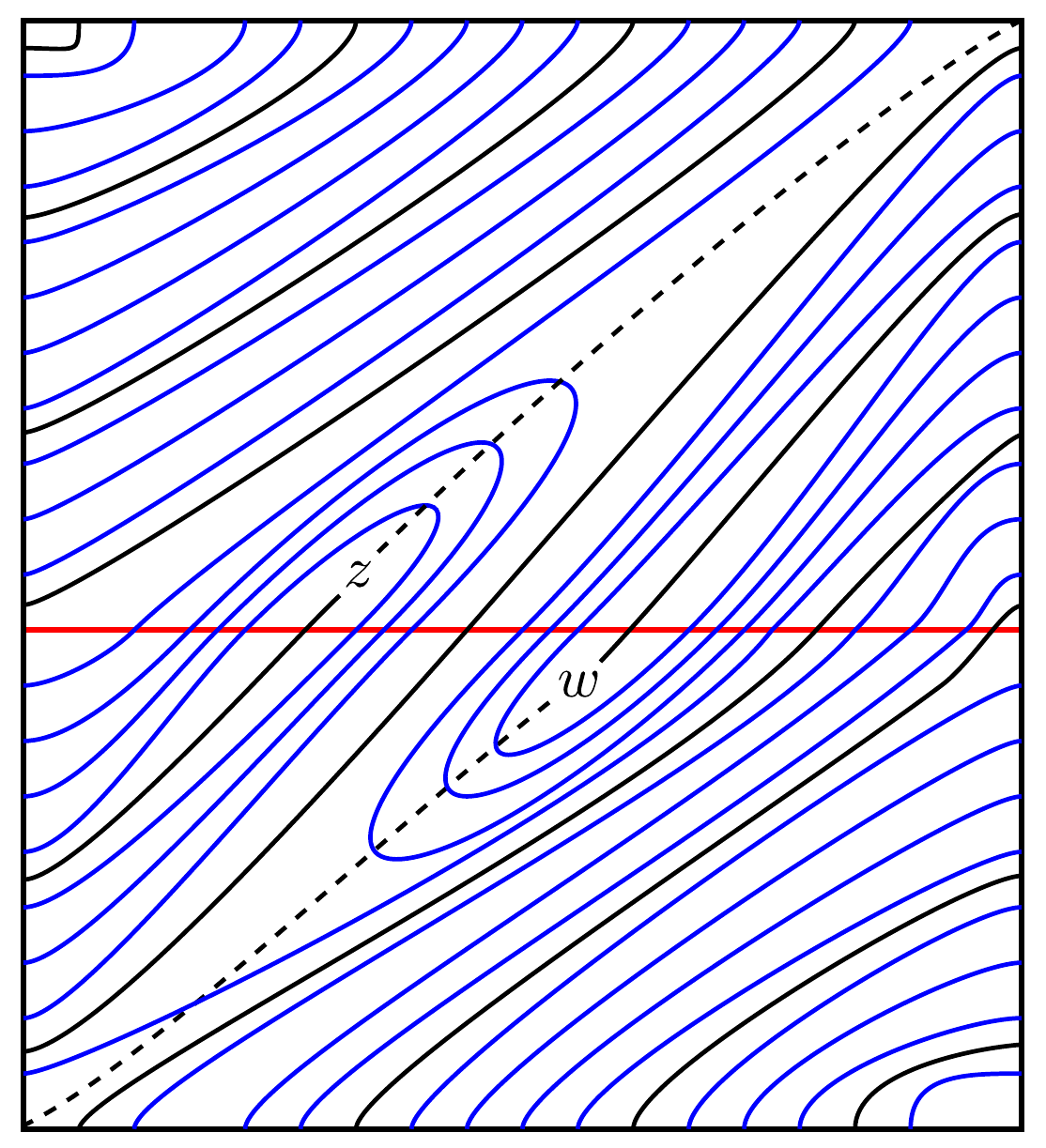}
\hspace{10mm}
\includegraphics[scale=0.3]{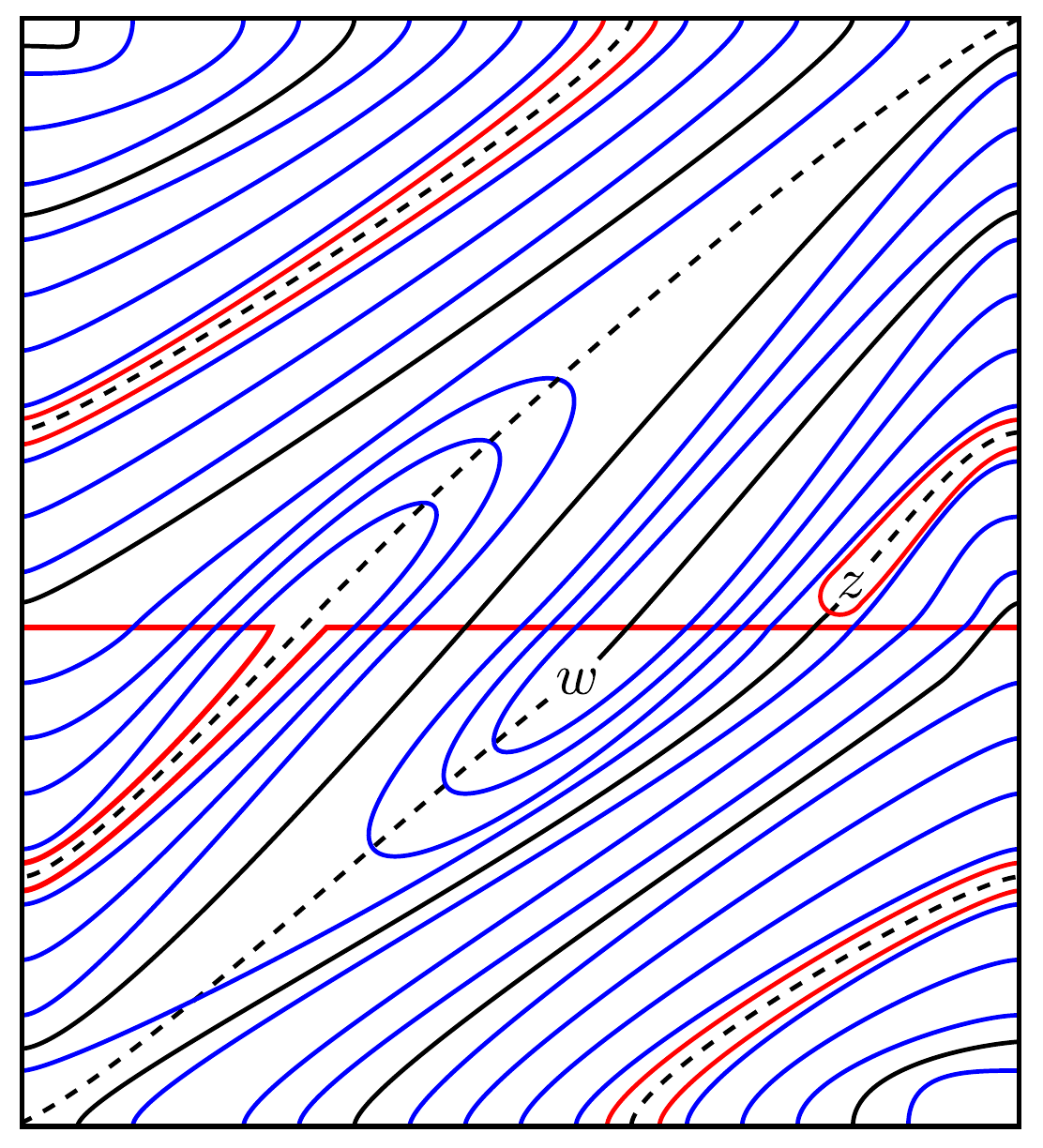}
\caption{(Left) The $(1,1)$--diagram of $T_{5,7}$. (Right) Move $z$ and $\alpha$.}\label{ex_T57_1_2}
\end{figure}

\begin{figure}
\centering
\includegraphics[scale=0.3]{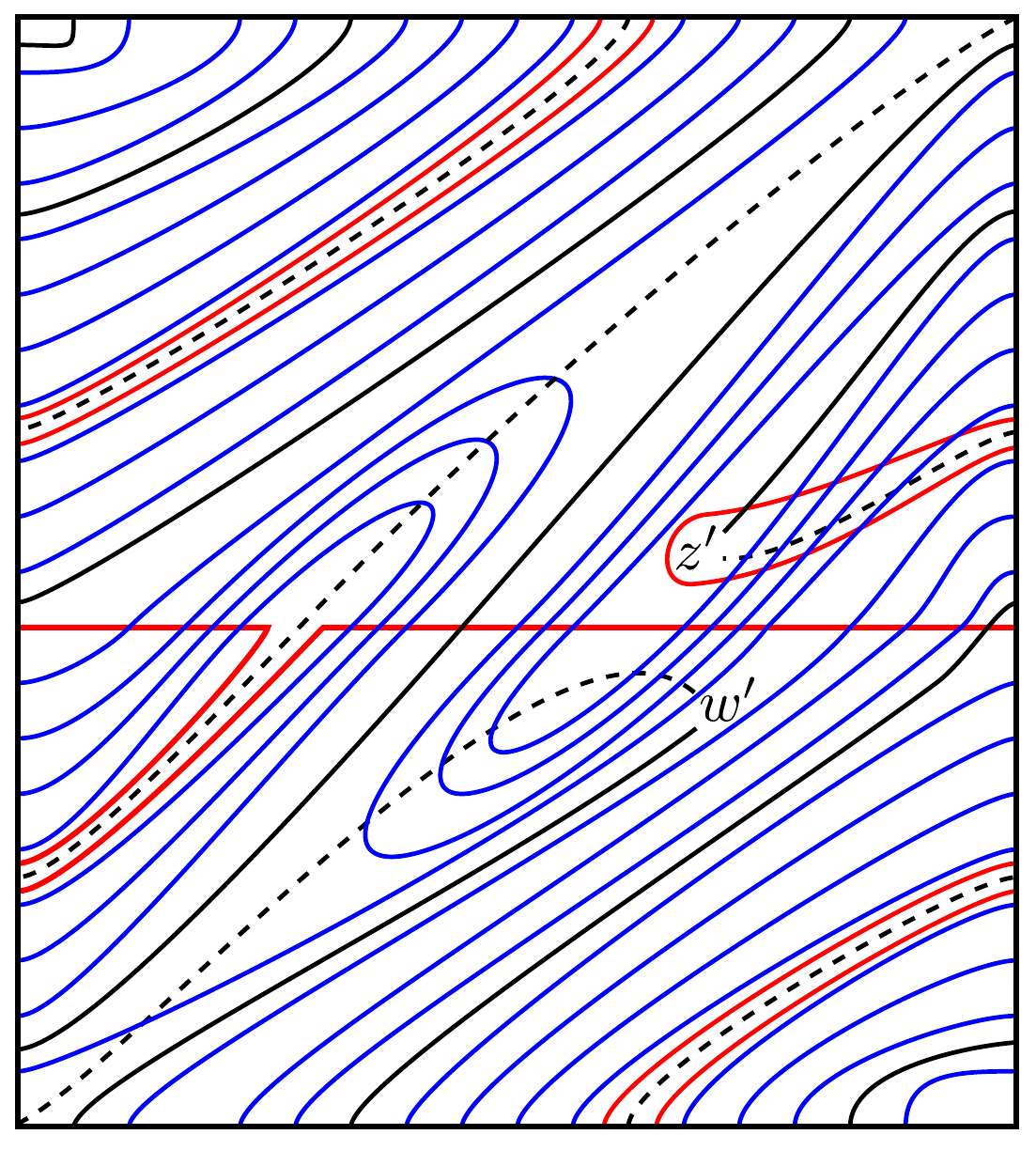}
\hspace{10mm}
\includegraphics[scale=0.3]{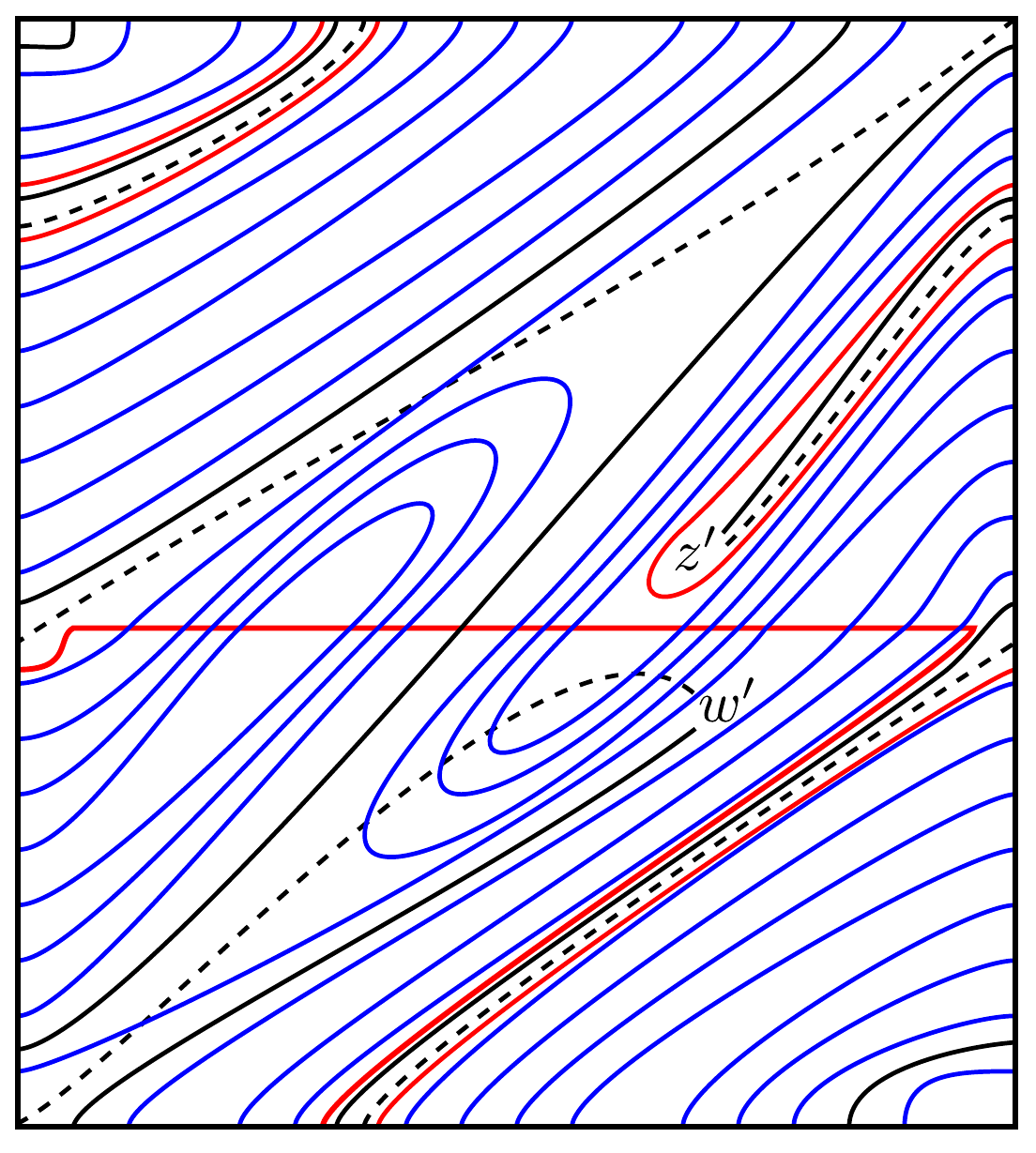}
\caption{(Left) Perform the pinch move. (Right) Shift $\alpha'$ together with $k'_\alpha$.}\label{ex_T57_3_4}
\end{figure}

\begin{figure}
\centering
\includegraphics[scale=0.3]{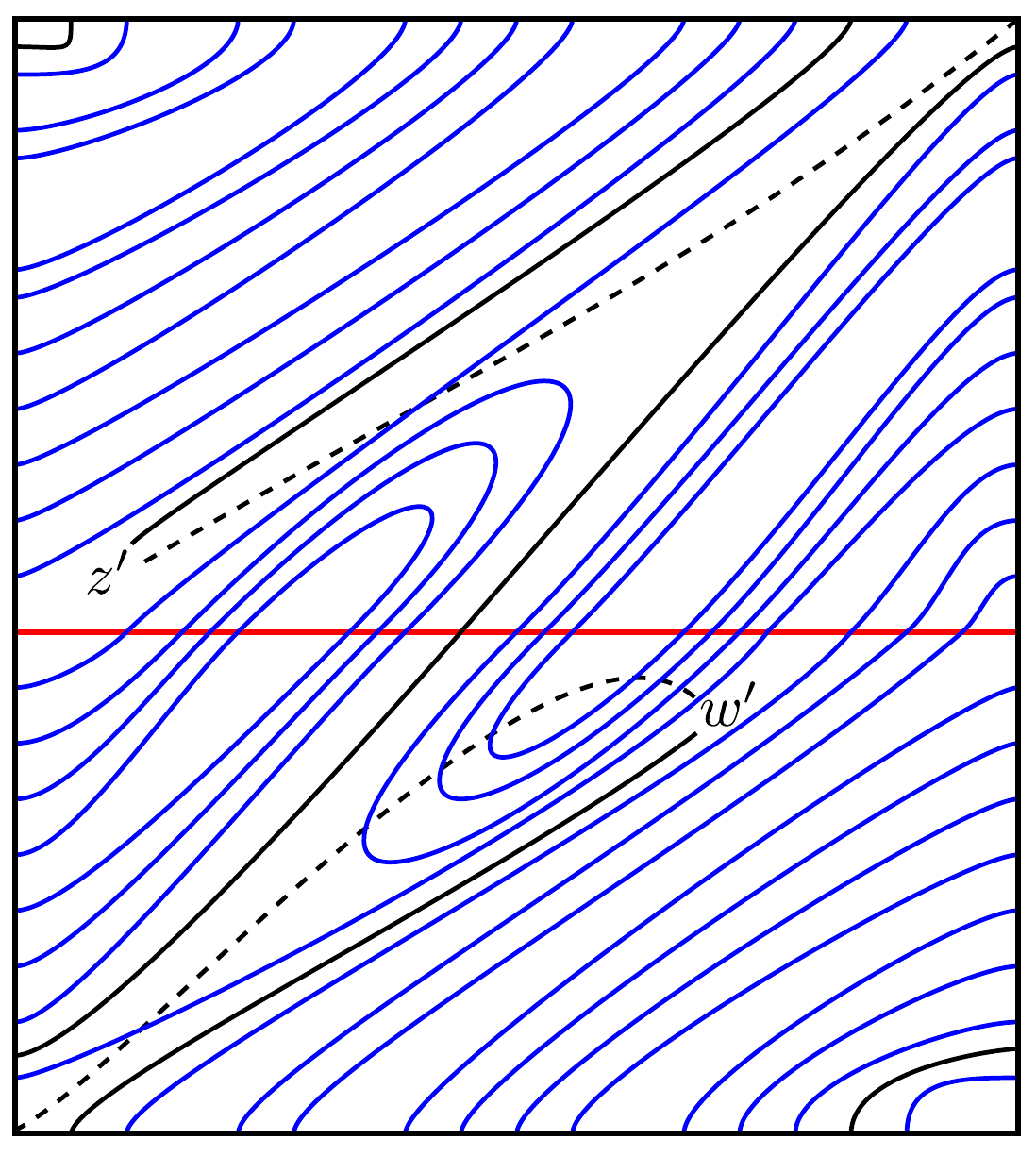}
\hspace{10mm}
\includegraphics[scale=0.3]{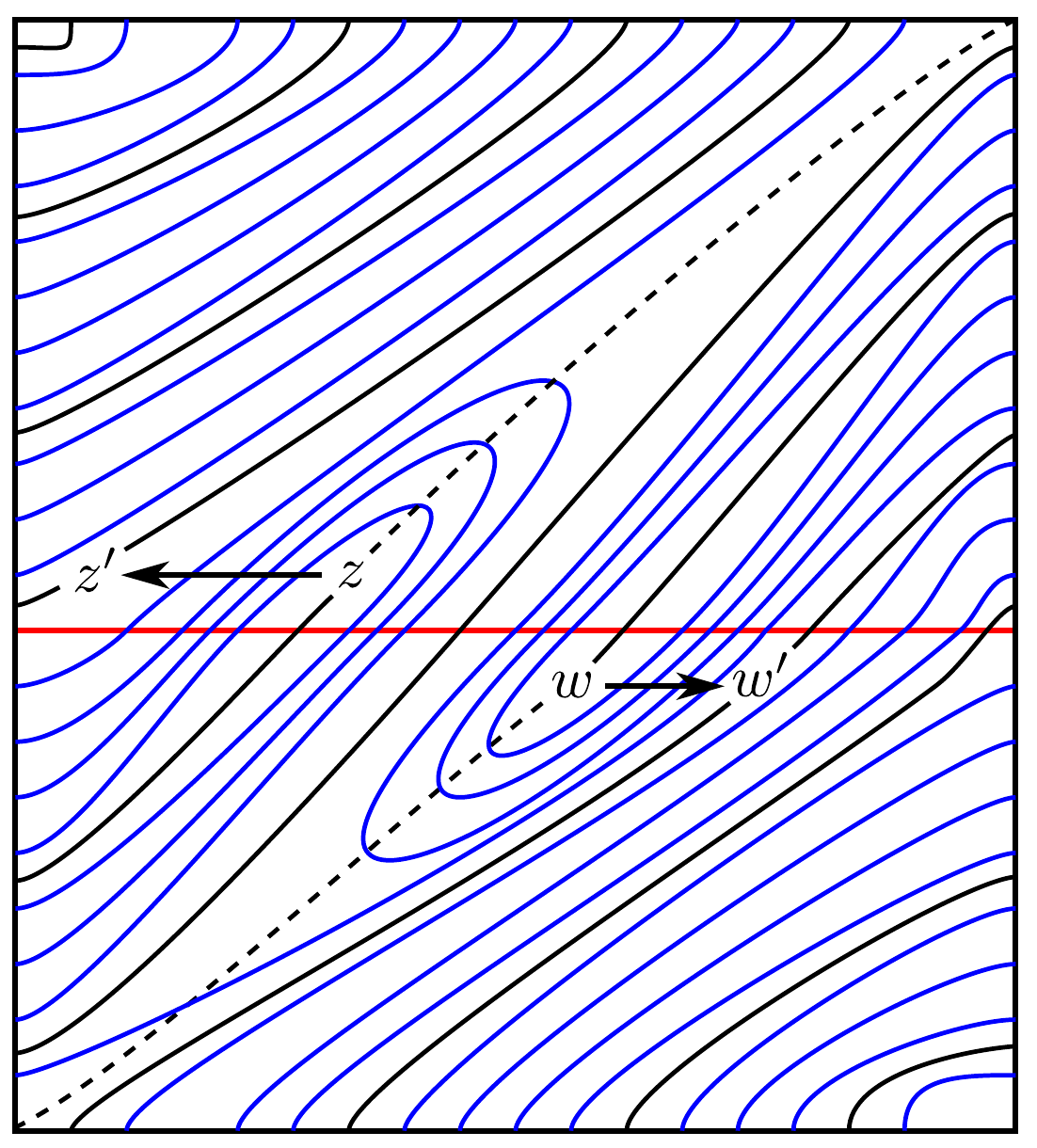}
\caption{(Left) Return $\alpha'$ to the standard position. This is a (non-reduced) $(1,1)$--diagram of $T_{1,1}$ after the pinch move. (Right) This shows the movement of two base points, called the pinch move. Remark that the dashed and solid lines represent $k_\alpha\cup k_\beta=T_{5,7}$.}\label{ex_T57_5_6}
\end{figure}

\end{example}


\section{Differential bigons in the universal cover of the $(1,1)$--diagram}
In this section, we give a behavior of $\Ord'$ before and after a pinch move, and prove Theorem \ref{thm_main}. Suppose that $T_{p',q'}$ is obtained by the pinch move on $T_{p,q}$.

\subsection{Points in differential bigons}
As mentioned above, to calculate $\CFK'(T_{p,q})$, it is helpful to consider the universal cover of the $(1,1)$--diagram $(\Sigma;\alpha,\beta;z,w)$ of the torus knot $T_{p,q}=k_\alpha\cup k_\beta$. 
Take the universal cover $\R^2$ so that lifts of $\alpha$ are horizontal lines. 
Let $\tilde\alpha$ and $\tilde\beta$ be one connected component of lifts of $\alpha$ and $\beta$, respectively. 
A bigon cobounded by $\tilde\alpha$ and $\tilde\beta$ contributing the differential on $\CFK'(K)$ is called a {\it differential bigon\/}. 
According to \cite{GLV18}, since a positive torus knot is an $L$--space knot, the intersection points between the curves $\tilde\beta$ and $\tilde\alpha$ occur in the opposite orders along $\tilde\alpha$ and $\tilde\beta$ (this condition is called {\it positive graphic\/} in \cite{GLV18}). 
Moreover, a differential bigon contains either $z$ or $w$, but not both. 
Hereafter, we consider only differential bigons containing $z$ points. 
The discussion that follows can be similarly applied to differential bigons containing $w$ points due to symmetry of the $(1,1)$--diagram. 
If there is no particular confusion, we denote the lifts of $z$, $w$, $k_\alpha$ and $k_\beta$, as well as the others lifts of $\alpha$ and $\beta$, simply as $z$, $w$, $k_\alpha$, $k_\beta$, $\alpha$ and $\beta$, respectively. 

Here, we observe a specific example to see how the number of $z,\ w,\ z'$ and $w'$ changes within differential bigons under the pinch move. 
Note that since $\alpha=\alpha'$ and $\beta=\beta'$ as sets, there is one-to-one correspondence between two sets of differential bigons of $(\Sigma;\alpha,\beta;z,w)$ and $(\Sigma;\alpha',\beta';z',w')$.

\begin{example}
We consider a certain differential disk in the universal cover of the $(1,1)$--diagram of $T_{5,7}$. 
See Figure \ref{ex_T57_8}. 
Let $\phi$ be the differential bigon indicated by shaded area. 
Then, $n_z(\phi)+n_w(\phi)=4+0=4$. 
By the pinch move, two points $z$ and $w$ move as shown there, and hence a disk $\phi'$ corresponding to $\phi$ satisfies $n_{z'}(\phi')+n_{w'}(\phi')=1+2=3$. 
Therefore, $n_{z'}(\phi')+n_{w'}(\phi')=n_z(\phi)+n_w(\phi)-1$ holds.

\begin{figure}
\centering
\includegraphics[scale=0.3]{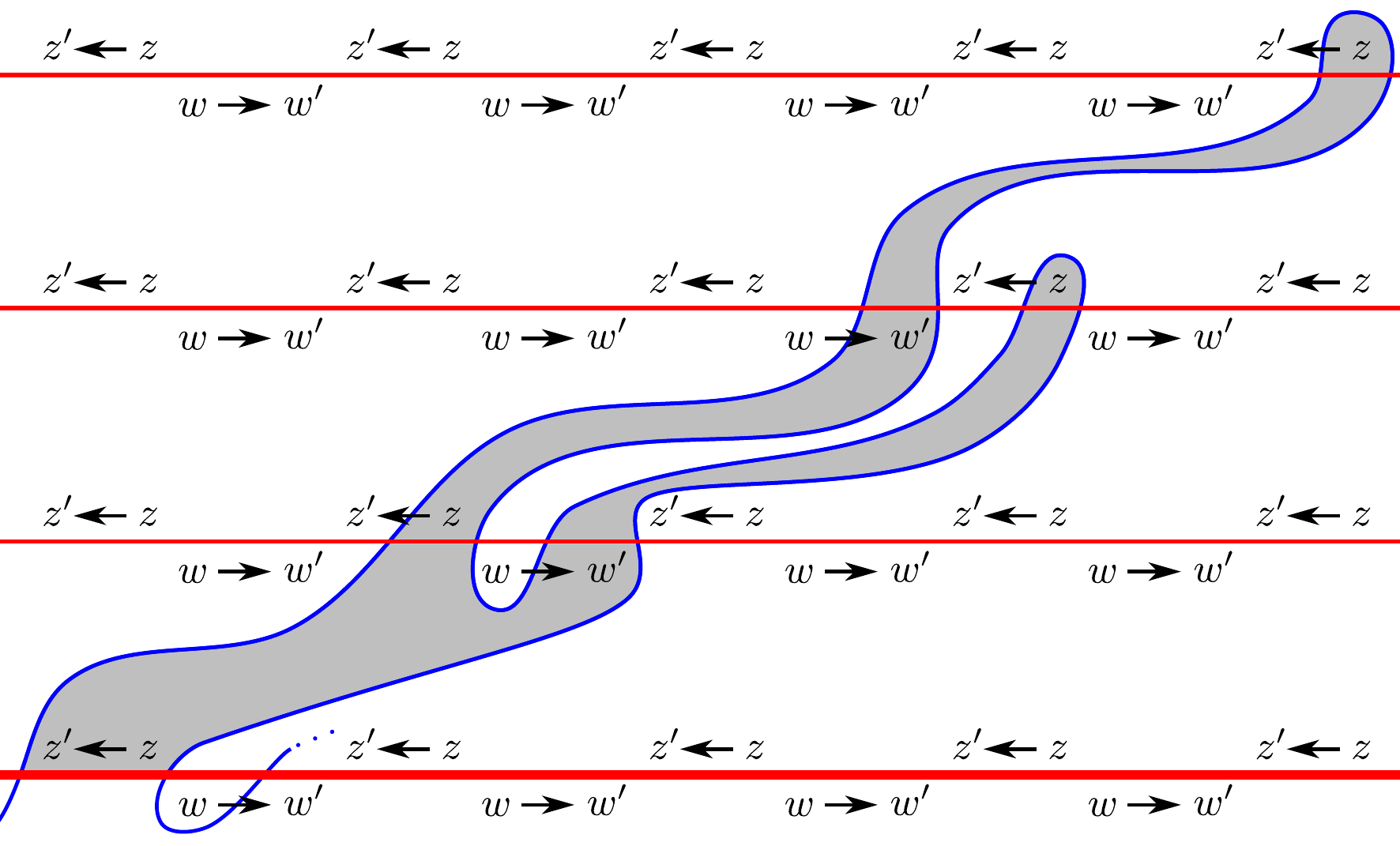}
\caption{A part of the universal cover of the $(1,1)$--diagram of $T_{5,7}$. 
The bold red line represents $\tilde\alpha$, other red lines represent lifts of $\alpha$ that is not $\tilde\alpha$. 
$\phi$ is the shaded bigon.}\label{ex_T57_8}
\end{figure}

\end{example}

In fact, the pinch move always lowers the number of points in a differential bigon by one (Proposition \ref{lem_bigon_point}). 
We show this below. 

By cutting a differential bigon along lifts of $\alpha$, we obtain the connected components, called {\it domains\/}. 
The boundary of a domain consists of subarcs of $\alpha$, $\tilde\alpha$ and $\tilde\beta$. 
Remark that a domain differs from a region used in Subsection \ref{Regions}. 
In particular, a region does not contain $\beta$--arcs in its interior, but a domain may contain $\beta$--arcs in its interior. 
There are three type of domains; {\it end--domain\/}, {\it rectangle--domain\/} and {\it $Y_n$--domain\/}, as illustrated in Figures \ref{domain_1} and \ref{domain_2}. 
A rectangle--domain can be regarded as a $Y_0$--domain, but for the sake of clarity, we deliberately describe them separately. 
Note that for a domain, each subarc of $\alpha$ or $\tilde\alpha$ on the boundary meets $k_\beta$. 
For a domain $D$, let $n_\circ (D)$ be the number of $\circ$ points in $D$ where $\circ=z,w,z',w'$.

\begin{figure}
\centering
\includegraphics[scale=0.45]{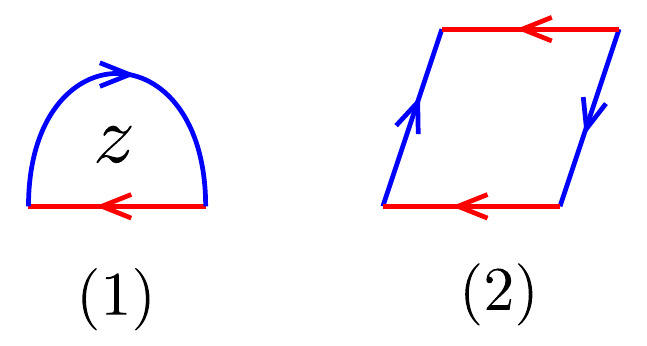}
\caption{Red lines are lifts of $\alpha$ and blue lines are $\tilde\beta$. 
(1) An {\it end--domain\/}. This contains exactly one $z$ point. 
(2) A {\it rectangle--domain\/}. This contains at most two $z$ points (see the proof of Lemma \ref{lem_domain_rectangle}).}\label{domain_1}
\end{figure}

\begin{figure}
\centering
\includegraphics[scale=0.3]{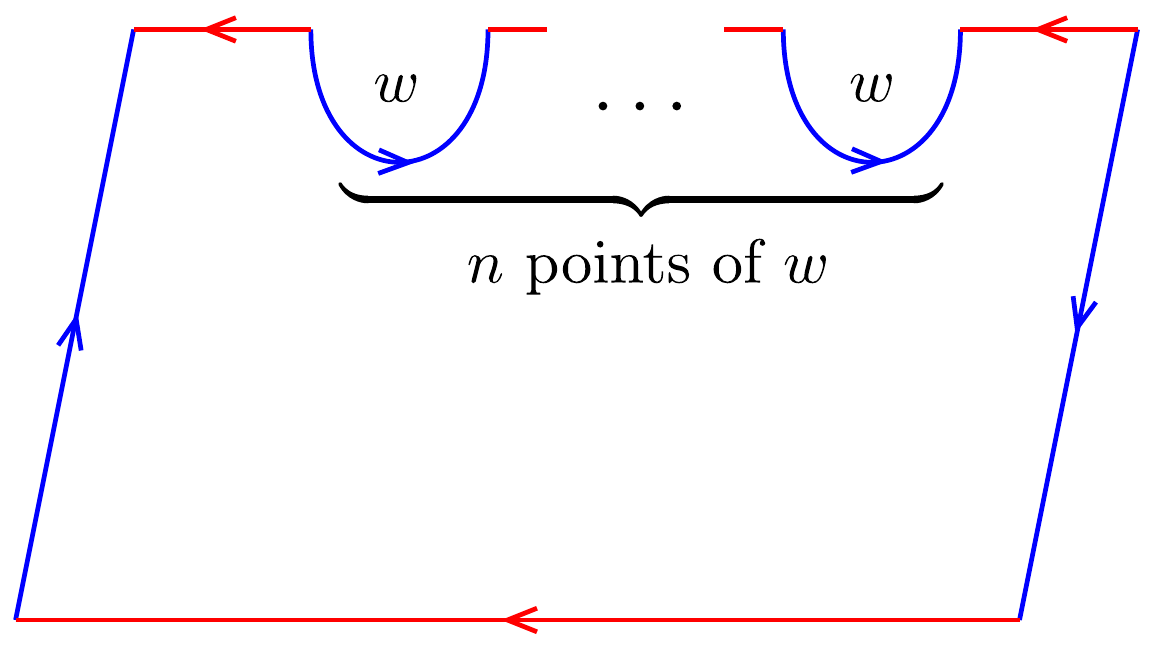}
\caption{A {\it $Y_n$--domain\/}. This contains $n$ to $n+2$ $z$ points (see the proof of Lemma \ref{lem_domain_rectangle}). 
A rectangle--domain can be regarded as a $Y_0$--domain.}\label{domain_2}
\end{figure}

Here, we provide lemmas regarding how points in domains change before and after the pinch move. 
From now on, bold blue lines in following figures represent $\tilde\beta$, other blue lines represent lifts of $\beta$ that is not $\tilde\beta$.

\begin{lemma}\label{lem_domain_rectangle}
For a rectangle--domain $D$ and its corresponding domain $D'$ by the pinch move, $n_{z'}(D')+n_{w'}(D')=n_z(D)+n_w(D)$. 
\end{lemma}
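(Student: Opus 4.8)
The plan is to carry out a strand-by-strand bookkeeping inside the universal cover of the $(1,1)$--diagram, exploiting the key feature established in the previous section: the pinch move fixes $\alpha$ and $\beta$ as sets and only slides the two base points.

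First I would set up a precise description of the three types of domain — end--domains, rectangle--domains, and $Y_n$--domains — and of where the lifts of $z$ and $w$ can sit inside each of them. The inputs are: the observation after the definition of a domain that every $\alpha$-- or $\tilde\alpha$--arc on the boundary of a domain meets $k_\beta$; the orientation restrictions on the adjacent $\beta$--arcs coming from Lemmas \ref{lem_opposite_edge}, \ref{lem_Y}, and \ref{lem_H}; and the positive graphic property, which holds because $T_{p,q}$ is an $L$--space knot \cite{GLV18}. Carrying this out yields, as a byproduct, the point-count bounds quoted in the captions of Figures \ref{domain_1} and \ref{domain_2} (a rectangle--domain carries at most two $z$--points, a $Y_n$--domain between $n$ and $n+2$), which is exactly why those captions refer to the present proof.

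Since $\alpha=\alpha'$ and $\beta=\beta'$ as sets, $D'$ is literally the same planar region as $D$, and by the analysis of the previous section (Figure \ref{pinch_diagram_6}) the pinch move simply moves each lift of $z$ and each lift of $w$ by one $k_\beta$--strand. So the assertion becomes: this one-column shift does not change the number of base points lying in the fixed region $D$. In a rectangle--domain the strands of $k_\beta$ cross $D$ in a single parallel family — there being neither a rainbow $\beta$--arc nor an interior $\alpha$--arc to disturb them — and the lifts of $z$ and $w$ in $D$ are exactly the endpoints occurring among those strands. For each of the finitely many admissible configurations found in the first step I would then read off an explicit pairing between the $\{z,w\}$--points of $D$ and the $\{z',w'\}$--points of $D$ realizing the shift: whenever the shift carries a base point out of $D$ through one vertical side, it carries a base point of the other type into $D$ through the opposite side, so the totals agree and $n_{z'}(D')+n_{w'}(D')=n_z(D)+n_w(D)$.

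The step I expect to be the real obstacle is the configuration analysis, and specifically ruling out configurations in which a strand of $k_\beta$ doubles back inside $D$, which would destroy the pairing. That is precisely where Lemmas \ref{lem_opposite_edge}--\ref{lem_H} and the $L$--space/positive graphic hypothesis have to be applied with care; once the list of admissible pictures is in hand, the count in each case should be immediate from Figures \ref{domain_1} and \ref{pinch_diagram_6}.
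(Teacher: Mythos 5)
Your overall strategy is the paper's: both arguments reduce to a finite configuration analysis of a rectangle--domain and its immediate surroundings, constrained by the disjointness of $k_\alpha$ and $k_\beta$ and by Lemmas \ref{lem_opposite_edge}, \ref{lem_Y} and \ref{lem_H}, using the fact that the pinch move only slides the base points to adjacent $k_\beta$--strands (Figure \ref{pinch_diagram_6}); the paper organizes this as a case analysis on $n_z(D)\in\{0,1,2\}$, with the bound $n_z(D)\le 2$ obtained exactly as you predict, from the fact that a third $z$ would force $k_\beta$ to meet $k_\alpha$.

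However, as written your proposal has a genuine gap at its one substantive step. The pairing principle you state --- ``whenever the shift carries a base point out of $D$ through one vertical side, it carries a base point of the other type into $D$ through the opposite side'' --- is neither proved nor sufficient, because base points can also enter $D$ across its \emph{horizontal} ($\alpha$) sides: under the pinch move a $w$ sitting just across a lift of $\alpha$ from $D$ may land inside $D$. This is not a hypothetical worry; it is exactly the mechanism that makes the count \emph{increase} by $n$ for a $Y_n$--domain (Lemma \ref{lem_domain_Y}), so no purely formal shift/pairing argument can give equality for rectangle--domains. The substantive content of the paper's proof is ruling out such entries (and, when the single $z$ exits, guaranteeing exactly one compensating entry): in each case one first pins down the local position of $k_\beta$ and hence of $k_\alpha$, then uses the $\tilde\beta$--boundary of $D$ to produce nearby segments of $\beta$, and finally applies Lemma \ref{lem_Y} or \ref{lem_H} to the $A$--, $Y$-- or $H$--regions adjacent to $D$ to locate segments of $k_\beta$ that block any new $z'$ or $w'$ from landing in $D$. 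Your proposal instead identifies the main difficulty as ``$k_\beta$ doubling back inside $D$,'' which is not the crux; until the boundary-crossing analysis across the $\alpha$--sides is carried out (case by case, as in the paper), the equality $n_{z'}(D')+n_{w'}(D')=n_z(D)+n_w(D)$ is not established. A minor additional point: since only differential bigons containing $z$ are considered, $n_w(D)=0$, so the interior configuration is simpler than your description suggests; the delicate data live just outside $D$, not inside it.
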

\begin{proof}
$D$ contains at most two $z$ points. 
Otherwise, some $k_{\beta}$ must intersect $k_{\alpha}$ (see Figure \ref{fig_lem_domain_rectangle_z}), which is a contradiction. 
Thus, we consider three cases according to the number of $z$ contained in $D$.

\begin{figure}
\centering
\includegraphics[scale=0.3]{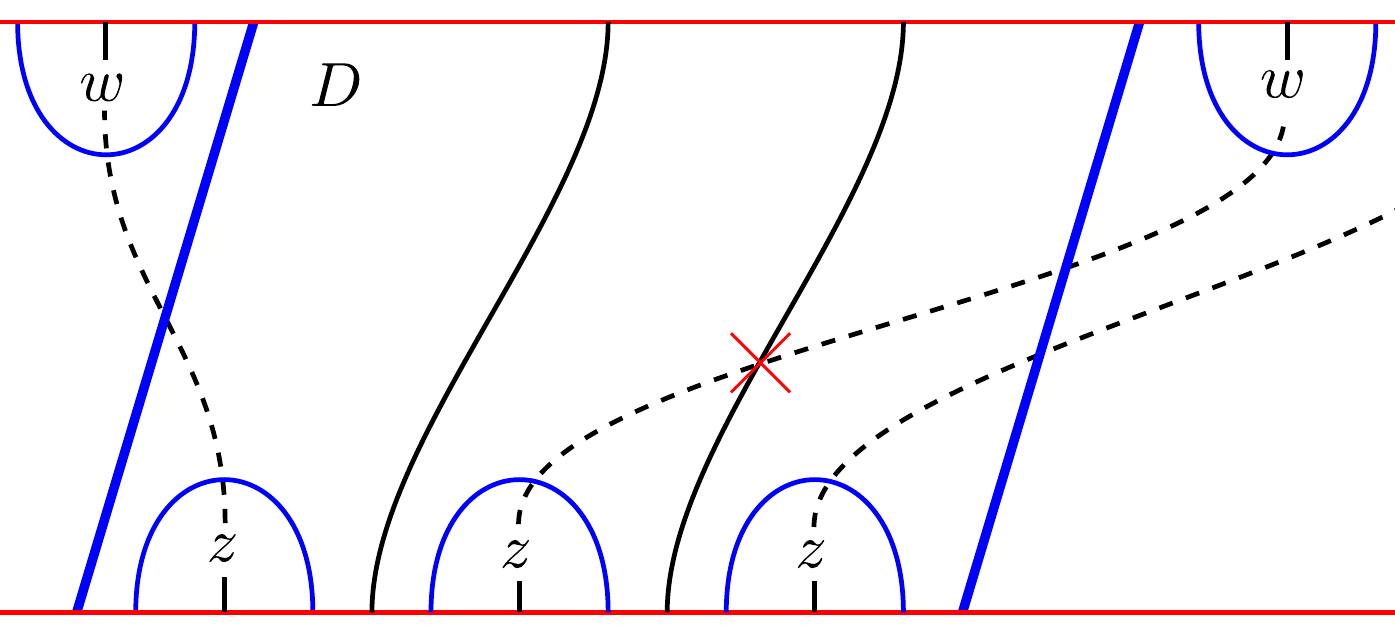}
\caption{A rectangle--domain $D$ containing three $z$ points. 
Recall that there is no $w$ point in $D$. 
The rectangle cobounded by the two red lines $\alpha$ and the bold blue lines $\tilde\beta$ is $D$. 
For example, if $k_\alpha$ is located in the position shown there, $k_\beta$ intersects $k_\alpha$ (this $k_{\beta}$ exists since $p\ge 2$). }\label{fig_lem_domain_rectangle_z}
\end{figure}

\begin{itemize}
\item[{\bf Case 1:}] $D$ contains no $z$ point. 
See Figure \ref{fig_lem_domain_rectangle_z0}. 
The position of $\tilde\beta$ implies that there exist segments of $\beta$ indicated by $*$. 
Moreover, by applying Lemma \ref{lem_Y} or \ref{lem_H} to the $A$--, $Y$-- or $H$--region in both side of $D$, two segments of $k_{\beta}$ indicated by $**$ exist.
Hence, $z'$ and $w'$ do not come into $D$ when we perform the pinch move. 
Therefore, we have $n_{z'}(D)+n_{w'}(D)=n_z(D)+n_w(D)=0$.

\begin{figure}
\centering
\includegraphics[scale=0.35]{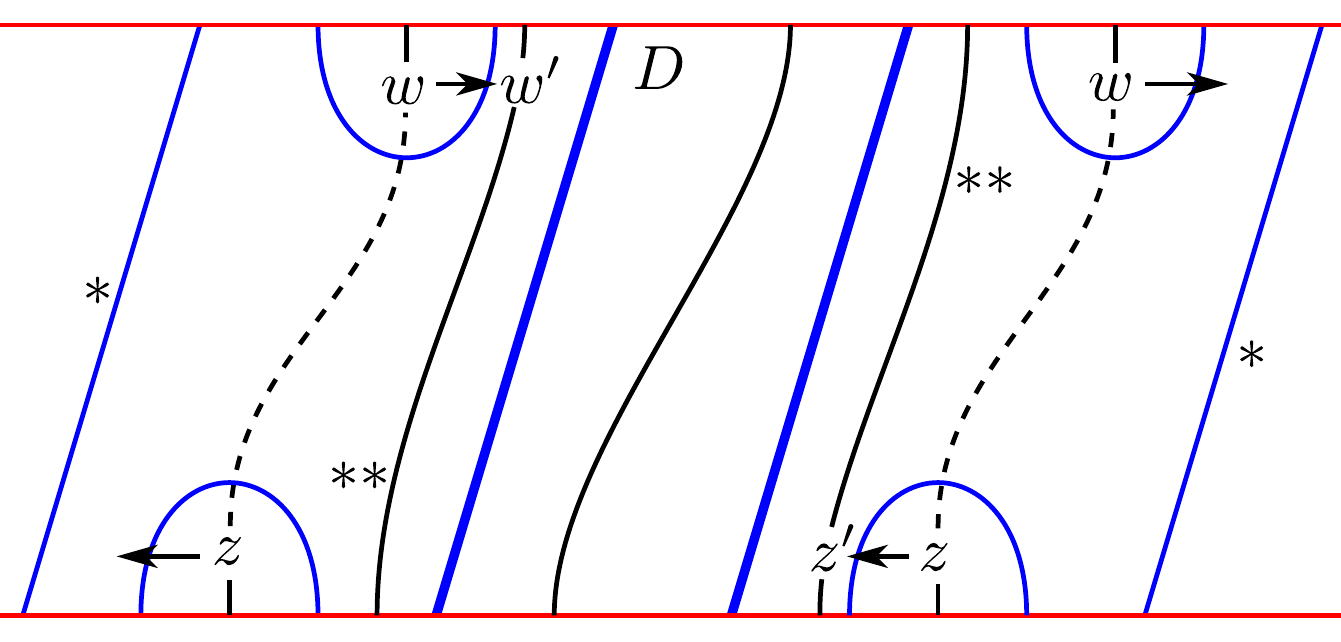}
\caption{The case of a rectangle--domain $D$ containing no $z$.}\label{fig_lem_domain_rectangle_z0}
\end{figure}

\item[{\bf Case 2:}] $D$ contains one $z$ point. 
There are two patterns; the case where $z$ goes outside of $D$ by the pinch move (Figure \ref{fig_lem_domain_rectangle_z1_2}), or not (Figure \ref{fig_lem_domain_rectangle_z1_1}). 
The discussions of both of cases are similar to each other. 
See Figures \ref{fig_lem_domain_rectangle_z1_2} and \ref{fig_lem_domain_rectangle_z1_1}. 
First, the position of $k_\beta$ is determined as shown in $(1)$ depending on whether $z$ goes outside of $D$ or not. 
Furthermore, this in turn determines the position of $k_\alpha$ as illustrated there. 
Second, there are segments of $\beta$ indicated by $(2)$, because of the position of $\tilde\beta$. 
Then, Lemma \ref{lem_Y} implies that there are segments of $k_\beta$ as in $(3)$. 
Therefore, we have $n_{z'}(D')+n_{w'}(D')=n_z(D)+n_w(D)=1$ for both cases.

\begin{figure}
\centering
\includegraphics[scale=0.35]{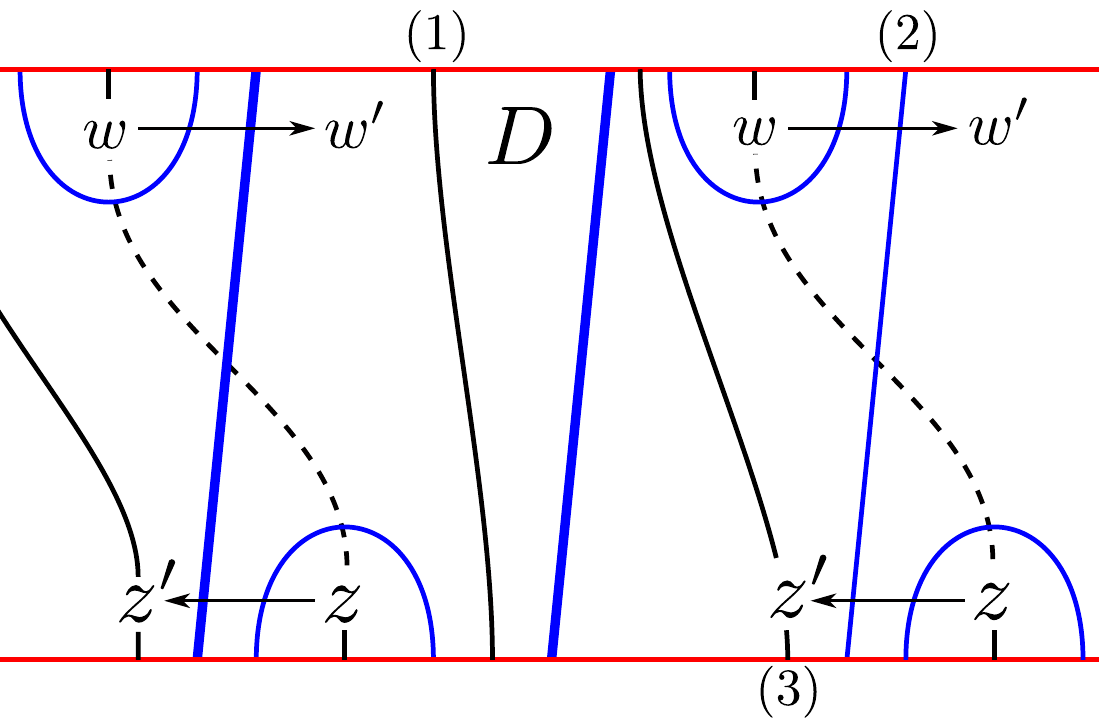}
\caption{The case of a rectangle--domain $D$ containing one $z$ which goes outside of $D$ by the pinch move. }\label{fig_lem_domain_rectangle_z1_2}
\end{figure}

\begin{figure}
\centering
\includegraphics[scale=0.35]{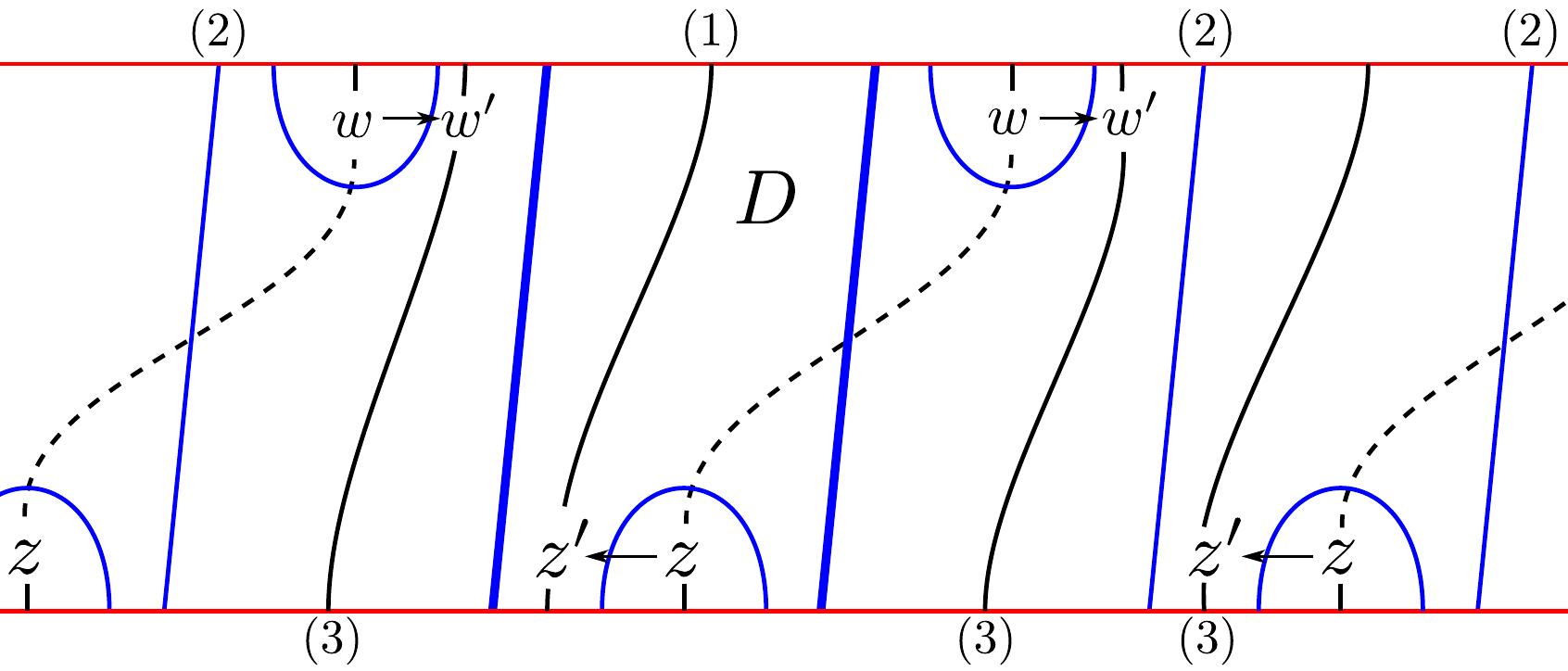}
\caption{The case of a rectangle--domain $D$ containing one $z$ which does not go outside of $D$ by the pinch move. }\label{fig_lem_domain_rectangle_z1_1}
\end{figure}

\item[{\bf Case 3:}] $D$ contains two $z$ points. The argument of this case is the same as in the case of $D$ with one $z$ point. Note that the right one of $z$ within $D$ cannot go outside of $D$. 
\end{itemize}
\end{proof}

\begin{lemma}\label{lem_domain_Y}
For a $Y_n$--domain $D$ and its corresponding domain $D'$ by the pinch move, $n_{z'}(D')+n_{w'}(D')=n_z(D)+n_w(D)+n$. 
\end{lemma}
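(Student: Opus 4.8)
The plan is to follow the pattern of the proof of Lemma~\ref{lem_domain_rectangle} and to argue by induction on $n$. The base case $n=0$ is exactly Lemma~\ref{lem_domain_rectangle}, since a rectangle--domain is a $Y_0$--domain, so it is enough to carry out the inductive step. Throughout I would use that, because $\alpha=\alpha'$ and $\beta=\beta'$ as sets, the regions $D$ and $D'$ coincide as subsets of the universal cover $\R^2$; hence the content of the lemma is purely about how the pinch move relocates the four base points relative to the fixed region $D$.

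First I would make precise the layered structure of a $Y_n$--domain visible in Figure~\ref{domain_2}: such a $D$ is obtained from a $Y_{n-1}$--domain $\bar D$ by attaching one additional ``arm'' between two consecutive lifts of $\alpha$, this outermost arm being (a lift of) a $Y$-- or $H$--type region. Controlling the orientations of the two $\beta$--arcs that bound the arm by means of Lemma~\ref{lem_opposite_edge}, exactly as in the proofs of Lemmas~\ref{lem_Y} and~\ref{lem_H}, one sees that the arm contains exactly one $z$ point and no $w$ point in the original diagram, so that $n_z(D)=n_z(\bar D)+1$ and $n_w(D)=n_w(\bar D)$. (This is also the reason a $Y_n$--domain carries $n$ to $n+2$ lifts of $z$: one is forced in each of the $n$ arms, on top of the $0$ to $2$ carried by the rectangular base.)

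Next I would analyze the pinch move on the outermost arm alone, in the style of Lemma~\ref{lem_domain_rectangle}. The position of $\tilde\beta$ relative to the two bounding lifts of $\alpha$ forces the presence of certain segments of $\beta$; Lemma~\ref{lem_Y} (or Lemma~\ref{lem_H}) then forces the presence of certain segments of $k_\beta$ inside the arm; and these, via $k_\alpha\cap k_\beta=\{z,w\}$, pin down how $k_\alpha$ runs through the arm and therefore exactly where $z'$ and $w'$ can land. A short case check (according to whether the $z$ originally in the arm leaves the arm under the pinch move, and according to which of $z',w'$ migrates into it) then shows that in every case the arm of $D'$ contributes $n_{z'}+n_{w'}=2$, that is, one more than the arm's original contribution $n_z+n_w=1$ to $D$. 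Combining this with the inductive hypothesis applied to $\bar D$, namely $n_{z'}(\bar D')+n_{w'}(\bar D')=n_z(\bar D)+n_w(\bar D)+(n-1)$, gives $n_{z'}(D')+n_{w'}(D')=n_z(D)+n_w(D)+n$, as desired.

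The step I expect to be the main obstacle is the bookkeeping of this induction: one must check that peeling the outermost arm off $D$ really leaves a genuine $Y_{n-1}$--domain whose point counts drop by precisely the content of the arm, and, crucially, that the pinch move carries no base point across the $\alpha$--arc that separates the arm from $\bar D$, so that the ``$+1$'' on the arm and the inductive ``$+(n-1)$'' on $\bar D$ add without any double counting or loss. This no-crossing property is forced by the same structural lemmas (Lemmas~\ref{lem_opposite_edge}, \ref{lem_Y} and~\ref{lem_H}) together with the disjointness $k_\alpha\cap k_\beta=\{z,w\}$, just as in the rectangular ($n=0$) case; the only genuinely new feature is that one such configuration must be handled at each of the $n$ levels, and the induction is precisely what organizes these uniformly. (This lemma, together with Lemma~\ref{lem_domain_rectangle} and the analogous count for end--domains, is one of the ingredients that will assemble into Proposition~\ref{lem_bigon_point}.)
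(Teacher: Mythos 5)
Your underlying mechanism is the right one and is the same as the paper's: each of the $n$ ``notches'' of a $Y_n$--domain acquires exactly one extra base point under the pinch move, while all other counts are unchanged by a rectangle--style case analysis using Lemmas \ref{lem_opposite_edge}, \ref{lem_Y}, \ref{lem_H} and $k_\alpha\cap k_\beta=\{z,w\}$. However, the inductive packaging has a genuine gap, and it is exactly the one you flagged without resolving. A domain is by definition a connected component of a differential bigon cut along \emph{lifts of $\alpha$}, so its boundary consists of subarcs of lifts of $\alpha$ and of $\tilde\beta$; in particular a $Y_n$--domain already lies between two consecutive lifts of $\alpha$, so the cut separating your outermost ``arm'' from $\bar D$ is not along a lift of $\alpha$, and $\bar D$ is not a $Y_{n-1}$--domain of any differential bigon of the diagram. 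Hence the inductive hypothesis --- a statement about how the pinch move relocates base points relative to genuine $Y_{n-1}$--domains --- cannot be invoked for $\bar D$. In addition, your per-arm input is asserted rather than proved: an arm need not be (a lift of) a single $Y$-- or $H$--region, since a domain may contain $\beta$--arcs in its interior, and the claim that each arm contains exactly one $z$ is stronger than what the geometry gives (the paper only establishes $n\le n_z(D)\le n+2$ for the whole domain). Since the ``$+1$ per arm'' computation is precisely the nontrivial content, the proposal as written does not close.

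The paper avoids the induction entirely and argues directly. First, $D$ contains at most $n+2$ lifts of $z$ for the same reason a rectangle--domain contains at most two, and at least $n$: otherwise one of the $n$ lifts of $w$ sitting just above the domain (one per notch) would be joined by its lift of $k_\alpha$ to a $z$ outside the domain, forcing $k_\beta$ to meet $k_\alpha$, a contradiction (Figure \ref{fig_lem_domain_Y}). Second, the pinch move carries exactly those $n$ lifts of $w$ into the domain, which is the source of the $+n$. Third, the count of all other points is unchanged, by the same case analysis as in Lemma \ref{lem_domain_rectangle}. If you want to keep your peeling bookkeeping, you would have to prove the localized statement for the truncated regions directly, at which point you are essentially reproducing this direct argument rather than using induction.
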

\begin{proof}
A $Y_n$--domain contains only $n$ to $n+2$ points of $z$: 
The reason why a $Y_n$--domain can contain at most $n+2$ points is the same as the reason why a rectangle--domain can contain at most two points. 
If a $Y_n$--domain contains less than $n$ $z$ points, then at least one of $w$, which is located just above the $Y_n$--domain, is connected to $z$ by $k_\alpha$, which is outside of the domain. 
Then, there would be $k_\beta$ meeting $k_\alpha$ (see Figure \ref{fig_lem_domain_Y}).

Also, as in Figure \ref{fig_lem_domain_Y}, $n$ points of $w$ go inside a $Y_n$--domain by the pinch move. 
The fact that the number of other points within the domain does not change can be shown in a manner similar to Lemma \ref{lem_domain_rectangle}, so we have $n_{z'}(D')+n_{w'}(D')=n_z(D)+n_w(D)+n$.
\end{proof}

\begin{figure}
\centering
\includegraphics[scale=0.35]{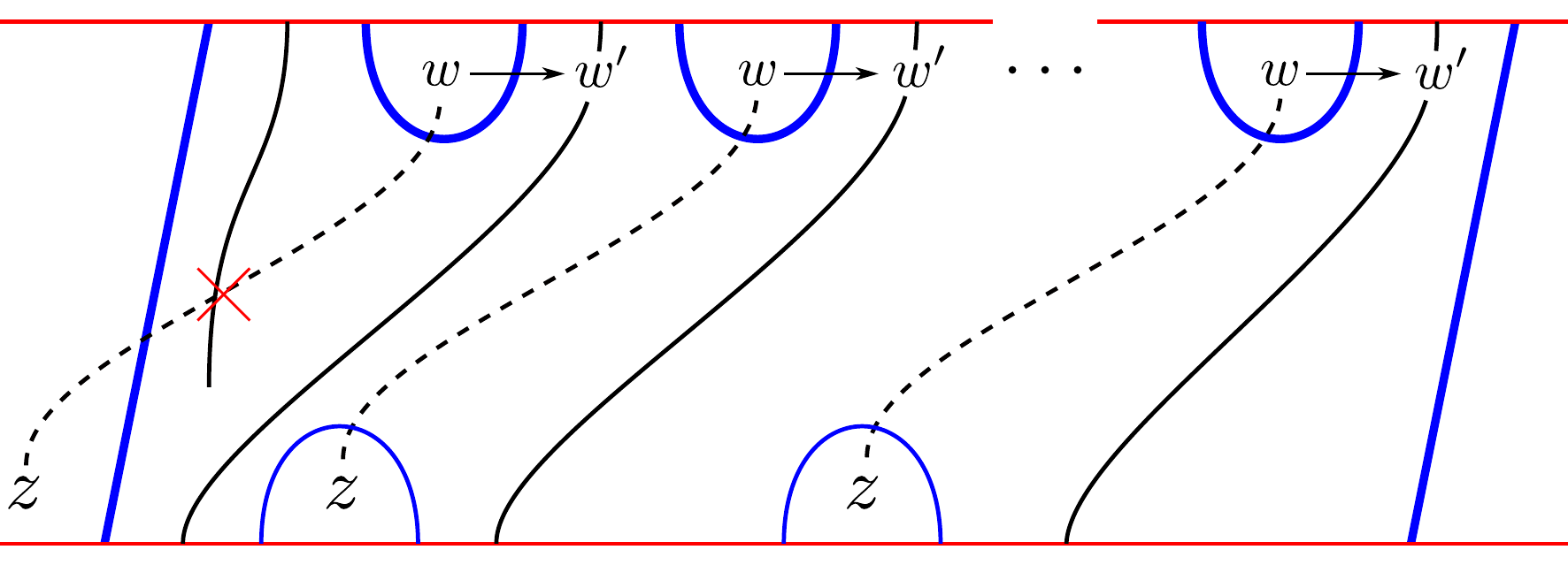}
\caption{If a $Y_n$--domain contains less than $n$ $z$ points, then there is $w$ connecting to an outside $z$ by $k_\alpha$ as in the figure. Then, $k_\beta$ and $k_\alpha$ intersect. Moreover, $n$ points of $w$ go inside the $Y_n$--domain by the pinch move.}\label{fig_lem_domain_Y}
\end{figure}

\begin{lemma}\label{lem_bigon_point}
Let $\phi$ be a differential bigon for $T_{p,q}$, and $\phi'$ the differential bigon for $T_{p',q'}$ corresponding to $\phi$. 
Then, $n_{z'}(\phi')+n_{w'}(\phi')=n_z(\phi)+n_w(\phi)-1$.
\end{lemma}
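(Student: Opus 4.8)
The plan is to reduce the global identity to the local statements already in hand. First I would cut $\phi$ along the lifts of $\alpha$ meeting its interior; this presents $\phi$ as a chain of domains $D_0,D_1,\ldots,D_k$ with $D_0,D_k$ end--domains and each $D_i$ ($0<i<k$) a rectangle--domain or a $Y_{n_i}$--domain (a rectangle being the case $n_i=0$). Since the cutting arcs carry no base point, $n_z$ and $n_w$ are additive over this decomposition, and because $\alpha'=\alpha$ and $\beta'=\beta$ as sets, the corresponding domain $D_i'$ of $\phi'$ is exactly the piece of $\phi'$ lying between the same two lifts of $\alpha$, with only the base points having moved. The degenerate case in which $\phi$ meets no lift of $\alpha$ (so that $\phi$ is a single ``empty'' bigon carrying exactly one $z$) should be dispatched first and separately: there the statement just records that the pinch move carries that $z$ out and brings nothing in.

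For $k\ge 1$, applying Lemma \ref{lem_domain_rectangle} to each rectangle--domain and Lemma \ref{lem_domain_Y} to each $Y_{n_i}$--domain shows that the intermediate domains contribute a net change of $\sum_i n_i$ (summed over the intermediate domains) to the total point count. It then remains to prove that the two end--domains together satisfy
\[
\bigl(n_{z'}(D_0')+n_{w'}(D_0')\bigr)+\bigl(n_{z'}(D_k')+n_{w'}(D_k')\bigr)=1-\sum_i n_i ,
\]
bearing in mind that each of $D_0,D_k$ contains exactly one $z$ and no $w$ before the move.

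This end--domain analysis is the core of the argument and, I expect, the main obstacle. I would carry it out by the same region--chasing used in Lemmas \ref{lem_opposite_edge}--\ref{lem_domain_Y}: the position of $\tilde\beta$ on the boundary of an end--domain forces certain segments of $\beta$, and the $A$--, $Y$-- or $H$--region adjacent to that end--domain then forces certain segments of $k_\beta$; together these pin down exactly which of $z',w'$ can enter $D_0'$ and $D_k'$ when the base point $z$ is slid one string along $k_\beta$ by the pinch move (this one--string slide, arranged precisely by first moving $z$ to the string marked $*$, is what produces the ``$-1$''). The identity to be extracted is that, when no intermediate domain is a genuine $Y$--domain, exactly one end--domain keeps its point (its $z$ becoming $z'$) and the other loses it, while a genuine $Y_n$--domain in the interior forces a compensating loss at the ends. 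Since the left--hand side of the displayed equation is a count and hence nonnegative, the analysis must in particular show that at most one intermediate domain is a genuine $Y$--domain and then only a $Y_1$. Combining the three contributions gives $n_{z'}(\phi')+n_{w'}(\phi')=\bigl(n_z(\phi)+n_w(\phi)\bigr)-1$, as claimed.
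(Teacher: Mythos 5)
Your overall strategy (cut $\phi$ along the lifts of $\alpha$, use additivity of the point counts, and feed in Lemmas \ref{lem_domain_rectangle} and \ref{lem_domain_Y}) is the right one, but your structural picture of the decomposition is wrong, and the error is not cosmetic. You assume the cut pieces form a linear chain $D_0,\ldots,D_k$ with exactly two end--domains. In fact a $Y_n$--domain branches: each one turns one ``strand'' of the bigon into $n+1$ strands, so the decomposition is a tree whose number of end--domains is $n_1+\cdots+n_k+1$, where the $Y_{n_i}$--domains are the genuine branching pieces (in particular, with no $Y$--domains there is exactly \emph{one} end--domain, not two). This is precisely the count the paper uses: every end--domain contains exactly one $z$, that $z$ leaves under the pinch move and nothing enters, so the end--domains contribute $-(n_1+\cdots+n_k+1)$, the $Y_{n_i}$--domains contribute $+n_i$ each by Lemma \ref{lem_domain_Y}, rectangles contribute $0$, and the total is $-1$ for any values of $k$ and the $n_i$.

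Because of the two--end--domain assumption, your bookkeeping forces the extra conclusion that at most one intermediate domain is a genuine $Y$--domain and then only a $Y_1$, and you propose to prove this as part of the ``end--domain analysis.'' That claim is not available: the paper explicitly remarks that it does not know whether $Y_n$--domains with $n\ge 2$ occur, and its proof is arranged so that the answer does not matter. Your expectation that ``exactly one end--domain keeps its point and the other loses it'' is likewise at odds with what actually happens: \emph{every} end--domain loses its $z$, and the compensating gains occur inside the $Y_n$--domains (the $n$ points of $w$ that enter, already established in Lemma \ref{lem_domain_Y}), not at the ends. So the missing ingredient is the correct combinatorics of the tree decomposition --- the identity $\#\{\text{end--domains}\}=n_1+\cdots+n_k+1$ together with the fact that each end--domain simply empties --- and with that in hand no further case analysis or bound on the $n_i$ is needed.
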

\begin{proof}
Assume that there are $Y_{n_1}$--, $Y_{n_2}$--,\ldots, $Y_{n_k}$--domains where $n_1,\ldots,n_k$ are positive integers when we cut $\phi$ along all lifts of $\alpha$ (if there is no $Y_n$--domain, set $k=1$ and $n_1=0$). 
Then, there are $n_1+\cdots+n_k+1$ end--domains of $\phi$. 
By Lemmas \ref{lem_domain_rectangle}, \ref{lem_domain_Y} and the fact that $z$ in an end-domain goes out by the pinch move, we have 
\begin{align*}
n_{z'}(\phi')+n_{w'}(\phi')&=n_z(\phi)+n_w(\phi)+n_1+\cdots+n_k-(n_1+\cdots+n_k+1)\\
&=n_z(\phi)+n_w(\phi)-1.
\end{align*}
\end{proof}

\begin{remark}
We do not know whether a $Y_n$--domain with $n\ge 2$ actually appears or not. 
However, as we have discussed so far, either option is acceptable. 
\end{remark}

\subsection{Proof of Theorem \ref{thm_main}}
Since the positions $\tilde\alpha$ and $\tilde\beta$ are unchanged before and after the pinch move, $\CFK'(T_{p',q'})$ is equal to $\CFK'(T_{p,q})$ as an $\F_2[U]$--module. 

\begin{lemma}
The set of cycles in $\CFK'(T_{p',q'})$ coincides with one of $\CFK'(T_{p,q})$.
\end{lemma}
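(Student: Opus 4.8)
The plan is to compare the two differentials $\partial$ on $\CFK'(T_{p,q})$ and $\partial'$ on $\CFK'(T_{p',q'})$, which live on the same underlying $\F_2[U]$-module since $\alpha=\alpha'$ and $\beta=\beta'$ as sets. The set of generators $T_{\boldsymbol\alpha}\cap T_{\boldsymbol\beta}$ is literally the same, and for each pair $x,y$ there is the one-to-one correspondence $\phi\mapsto\phi'$ between differential bigons of the two diagrams established at the start of this section. By Proposition \ref{lem_bigon_point}, this correspondence satisfies $n_{z'}(\phi')+n_{w'}(\phi')=n_z(\phi)+n_w(\phi)-1$. Hence, writing $\partial x=\sum_y c_{xy}(U)\cdot y$, the differential of the new complex is $\partial' x=\sum_y U^{-1}c_{xy}(U)\cdot y$, meaning that if we set $\partial x=\sum_y(\sum_\phi U^{n_z(\phi)+n_w(\phi)})y$ then $\partial' x=\sum_y(\sum_\phi U^{n_z(\phi)+n_w(\phi)-1})y$. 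The only subtlety is that every term really does lose exactly one power of $U$, so the rescaling is uniform: a coefficient of $\partial$ is a sum of monomials $U^{N}$ with (by Proposition \ref{lem_bigon_point}) the corresponding monomials in $\partial'$ being $U^{N-1}$, term by term. In particular $\partial$ has no constant ($U^0$) term except where the bigon is empty — but an empty differential bigon would contain no $z$ or $w$, contradicting that a differential bigon contains either $z$ or $w$; so $N\ge 1$ always, and $\partial'=U^{-1}\partial$ makes sense as an honest map on the module.

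With this identification in hand, the conclusion is immediate: $z\in\ker\partial$ if and only if $\partial z=0$ if and only if $U^{-1}\partial z=0$ (since $U$ acts without torsion on the free module $\CFK'$, i.e. $U\cdot v=0\Rightarrow v=0$), which says exactly $\partial' z=0$. Thus $\ker\partial=\ker\partial'$, i.e. the set of cycles of $\CFK'(T_{p',q'})$ coincides with that of $\CFK'(T_{p,q})$.

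The step I expect to require the most care is the claim that the rescaling is \emph{uniform across all monomials in a given coefficient} — that is, that Proposition \ref{lem_bigon_point} applies bigon-by-bigon and not merely to some aggregate count, so that $\partial'=U^{-1}\partial$ on the nose rather than just "each coefficient of $\partial'$ is $c_{xy}$ with every monomial's exponent shifted in some possibly inhomogeneous way." Since Proposition \ref{lem_bigon_point} is stated for an individual differential bigon $\phi$ and its individual corresponding $\phi'$, this is fine; one just needs to note that distinct bigons $\phi_1\ne\phi_2$ from $x$ to $y$ correspond to distinct $\phi_1'\ne\phi_2'$, which is part of the one-to-one correspondence recorded above, so no cancellation is created or destroyed by the pinch move. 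The remaining ingredient, that $U$ is a non-zero-divisor on $\CFK'$, is immediate because $\CFK'$ is a free $\F_2[U]$-module.
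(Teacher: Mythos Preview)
Your proof is correct and follows essentially the same route as the paper: you use Lemma~\ref{lem_bigon_point} to identify $\partial = U\cdot\partial'$ on the common underlying free $\F_2[U]$--module, and then invoke that $U$ is a non-zero-divisor to conclude $\ker\partial=\ker\partial'$. The paper's version compresses all of this into two sentences, but the content is the same; your added remarks (that each bigon carries at least one base point so the exponent shift stays nonnegative, and that the bigon-by-bigon correspondence is bijective so no spurious cancellation occurs) are exactly the points implicit in the paper's phrase ``differs only by the action of $U$''.
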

\begin{proof}
By Lemma \ref{lem_bigon_point}, the differential of $\CFK'(T_{p',q'})$ differs from one of $\CFK'(T_{p,q})$ only by the action of $U$. 
Since the complexes are free $\F_2[U]$--modules, a cycle in $\CFK'(T_{p',q'})$ is also a cycle in $\CFK'(T_{p,q})$, and vice versa.
\end{proof}
We denote $c'\in\CFK'(T_{p',q'})$ as the corresponding cycle to a cycle $c\in\CFK'(T_{p,q})$.

\begin{proposition}\label{prop_unorihomology_pinch}
$\HFK'(T_{p',q'})\cong U\cdot \HFK'(T_{p,q})$ as an $\F_2[U]$--module. In particular, $\Ord'(T_{p',q'})=\Ord'(T_{p,q})-1$.
\end{proposition}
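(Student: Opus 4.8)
The plan is to compare the two chain complexes $\CFK'(T_{p,q})$ and $\CFK'(T_{p',q'})$ directly, using the fact—already established—that they coincide as free $\F_2[U]$--modules with the same underlying set of generators $T_{\boldsymbol\alpha}\cap T_{\boldsymbol\beta}$, the same cycles, and differentials that agree up to the power of $U$ on each differential bigon. Concretely, Lemma \ref{lem_bigon_point} says that for corresponding differential bigons $\phi\leftrightarrow\phi'$ one has $n_{z'}(\phi')+n_{w'}(\phi')=n_z(\phi)+n_w(\phi)-1$. So if $\partial x=\sum_y c_{xy}\,U^{e_{xy}}\cdot y$ in $\CFK'(T_{p,q})$, then the differential $\partial'$ in $\CFK'(T_{p',q'})$ is $\partial' x=\sum_y c_{xy}\,U^{e_{xy}-1}\cdot y$, with the same nonzero coefficients $c_{xy}\in\F_2$ appearing in the same spots (here I use that $\mu(\phi)=1$ forces the mod-$2$ count to be $1$, so the coefficient is governed purely by the combinatorics of which bigons exist, which is unchanged). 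In other words, $\partial=U\cdot\partial'$ as maps on the common module $M:=\CFK'(T_{p,q})=\CFK'(T_{p',q'})$.

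The next step is to turn the relation $\partial=U\,\partial'$ into the module isomorphism $\HFK'(T_{p',q'})\cong U\cdot\HFK'(T_{p,q})$. The idea is to use multiplication by $U$ itself as the comparison map. Since $U$ is not a zero divisor on the free module $M$ and $\partial(Ux)=U\partial x=U\cdot U\partial' x=U^2\partial' x=U\,\partial'(Ux)$... more usefully: the map $U\colon M\to M$ carries $\ker\partial'$ into $\ker\partial$ and $\operatorname{im}\partial'$ into $\operatorname{im}\partial$. Indeed, if $\partial' c=0$ then $\partial c=U\partial' c=0$, and if $c=\partial' b$ then $Uc=U\partial' b=\partial b$. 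More is true: because $\partial x=U\partial' x$ and $U$ is injective on $M$, we get $\ker\partial=\ker\partial'$ (the common set of cycles, as the preceding lemma already notes), while $\operatorname{im}\partial=U\cdot\operatorname{im}\partial'$. Therefore
\[
\HFK'(T_{p,q})=\frac{\ker\partial}{\operatorname{im}\partial}=\frac{Z}{U\cdot\operatorname{im}\partial'},\qquad
\HFK'(T_{p',q'})=\frac{Z}{\operatorname{im}\partial'},
\]
where $Z:=\ker\partial=\ker\partial'$. Multiplication by $U$ induces a surjection $Z/\operatorname{im}\partial'\twoheadrightarrow UZ/U\operatorname{im}\partial'$, and since $U$ is injective on $M\supseteq Z$ this is an isomorphism $\HFK'(T_{p',q'})\xrightarrow{\ \sim\ } UZ/U\operatorname{im}\partial'=U\cdot\HFK'(T_{p,q})$ (the last equality because $U(Z/U\operatorname{im}\partial')=UZ/U\operatorname{im}\partial'$ inside $M/\operatorname{im}\partial$). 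This gives the first assertion.

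Finally, I deduce the statement about torsion orders. The submodule $U\cdot\HFK'(T_{p,q})$ of $\HFK'(T_{p,q})$ has torsion submodule $U\cdot\mathrm{Tor}$, where $\mathrm{Tor}$ is the torsion submodule of $\HFK'(T_{p,q})$: multiplication by $U$ is injective on the free part and maps torsion to torsion, and conversely a torsion element of $U\cdot\HFK'(T_{p,q})$ is $U$ times a torsion element (here one uses that $\HFK'$ of an $L$--space knot is finitely generated and that $\F_2[U]$ is a PID, so the structure theorem applies and $U$ acts injectively on the free summand). Hence $\Ord'(T_{p',q'})=\min\{n\ge 0\mid U^n\cdot(U\cdot\mathrm{Tor})=0\}=\min\{n\ge 0\mid U^{n+1}\cdot\mathrm{Tor}=0\}=\Ord'(T_{p,q})-1$, using that $\mathrm{Tor}\ne 0$ (equivalently $\Ord'(T_{p,q})\ge 1$), which holds because $T_{p,q}$ is a nontrivial knot and $\u^u_b>0$, or more directly because the continued-fraction hypothesis gives $P(p,q)\ge 1$ and the inductive setup guarantees we only invoke this when the knot is nontrivial. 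The step I expect to require the most care is the bookkeeping that $\partial=U\partial'$ \emph{exactly} (not just up to lower-order terms)—i.e., that every differential bigon for $T_{p,q}$ corresponds to one for $T_{p',q'}$ and vice versa with coefficients preserved and the $U$-power dropping by precisely $1$ in every case; this is exactly what Lemma \ref{lem_bigon_point} together with the one-to-one correspondence of differential bigons (from $\alpha=\alpha'$, $\beta=\beta'$ as sets) provides, but it is worth stating cleanly that no bigon has $n_z(\phi)+n_w(\phi)=0$, so that the exponents $e_{xy}-1$ are genuinely $\ge 0$ and $\partial'$ is a well-defined differential.
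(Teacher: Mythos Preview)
Your proof is correct and follows essentially the same approach as the paper: both construct the isomorphism via multiplication by $U$, using Lemma \ref{lem_bigon_point} to identify $\partial=U\partial'$ (equivalently, the paper's map $f([c'])=U\cdot[c]$). You are somewhat more explicit than the paper in verifying the algebra and in justifying the ``in particular'' clause about torsion orders (including the nontriviality of $\mathrm{Tor}$), but the underlying argument is the same.
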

\begin{proof}
Let $f\colon\HFK'(T_{p',q'})\to U\cdot \HFK'(T_{p,q})$ be an $\F_2[U]$--module map defined by $f([c'])=U\cdot [c]$ for a cycle $c'$ in $\CFK'(T_{p',q'})$. 
We can show that it is a well-defined map: 
If $[c']=[d']$ in $\HFK'(T_{p',q'})$, there is a chain $b'\in\CFK'(T_{p',q'})$ such that $\partial(b')=c'+d'$. 
Lemma \ref{lem_bigon_point} implies $\partial(b)=U\cdot(c+d)$, so $U\cdot [c]=U\cdot [d]$ in $U\cdot \HFK'(T_{p,q})$. 
Therefore, $f$ is well-defined.

Also, an $\F_2[U]$--module map $g\colon U\cdot \HFK'(T_{p,q})\to \HFK'(T_{p',q'})$ defined by $g(U\cdot [c])=[c']$ for a cycle $c$ in $\CFK'(T_{p,q})$ is the well-defined inverse map of $f$, so $\HFK'(T_{p',q'})\cong U\cdot \HFK'(T_{p,q})$ as an $\F_2[U]$--module.
\end{proof}

\begin{proposition}\label{prop_Ord_P}
For any positive torus knot $T_{p,q}$, $\Ord'(T_{p,q})=P(p,q)$.
\end{proposition}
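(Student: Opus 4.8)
The plan is to prove $\Ord'(T_{p,q}) = P(p,q)$ by induction on $P(p,q)$, using Proposition \ref{prop_unorihomology_pinch} for the inductive step and an explicit computation of the base case. First I would set up the induction: if $P(p,q) = n$, there is a pinch move $T_{p,q} \to T_{p',q'}$ with $P(p',q') = n-1$, and by Proposition \ref{prop_unorihomology_pinch} we have $\Ord'(T_{p,q}) = \Ord'(T_{p',q'}) + 1$. So it suffices to verify that $\Ord'(T_{p,q}) = 1$ whenever $P(p,q) = 1$, i.e.\ whenever a single pinch move takes $T_{p,q}$ to the unknot $O$.

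For the base case I would argue as follows. When $P(p,q) = 1$, Proposition \ref{prop_unorihomology_pinch} gives $\HFK'(O) \cong U \cdot \HFK'(T_{p,q})$, so $\Ord'(T_{p,q}) = \Ord'(O) + 1 = 0 + 1 = 1$, using that the unknot has no $\F_2[U]$--torsion in $\HFK'$ (its complex is the trivial one-generator complex, so $\HFK'(O) \cong \F_2[U]$ and $\Ord'(O) = 0$). Actually this observation shows the induction can be phrased more cleanly: $\HFK'(O) \cong \F_2[U]$ gives $\Ord'(O) = 0 = P(1,1)$, and then applying Proposition \ref{prop_unorihomology_pinch} along the full pinch sequence $T_{p,q} = T_{p_n,q_n} \to \cdots \to T_{p_0,q_0} = O$ yields $\Ord'(T_{p_n,q_n}) = \Ord'(T_{p_0,q_0}) + n = n = P(p,q)$ directly, with no separate base-case computation beyond the unknot.

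So the cleanest write-up: take a shortest pinch sequence of length $n = P(p,q)$ from $T_{p,q}$ to $O$; apply Proposition \ref{prop_unorihomology_pinch} $n$ times to get $\Ord'(T_{p,q}) = \Ord'(O) + n$; and observe $\Ord'(O) = 0$ since $\HFK'(O) \cong \F_2[U]$ is torsion-free. I expect the main (minor) obstacle to be a subtlety in Proposition \ref{prop_unorihomology_pinch}: it was proved under the standing assumption $1 < p < q$ used throughout Section 4, whereas the last step of the sequence lands on $T_{p_1,q_1} \to O = T_{1,1}$ where the target (and possibly the source, if $p_1 = 1$ or the parameters are small) falls outside that range. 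I would address this by noting that the combinatorial arguments of Section 4 — the $(1,1)$--diagram, the point-counting Lemmas \ref{lem_domain_rectangle}--\ref{lem_bigon_point}, and hence Proposition \ref{prop_unorihomology_pinch} — apply verbatim to each pinch move in the sequence as long as the source knot is a nontrivial positive torus knot, which is exactly the situation at every step $T_{p_i,q_i} \to T_{p_{i-1},q_{i-1}}$ with $i \ge 1$; the degenerate target $T_{1,1}$ only appears as the last term, where we use the direct computation $\Ord'(O) = 0$ instead. Combining, $\Ord'(T_{p,q}) = P(p,q)$, and together with Propositions \ref{prop_GM} and \ref{prop_pinch_formula} this will give Theorem \ref{thm_main}.
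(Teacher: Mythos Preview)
Your proposal is correct and follows essentially the same approach as the paper: induct on the pinch number, use $\Ord'(O)=0$ for the base case, and apply Proposition~\ref{prop_unorihomology_pinch} at each pinch step. Your remark about the standing hypothesis $1<p<q$ is a reasonable point of care that the paper leaves implicit; as you note, only the source of each pinch move needs to satisfy it, so the argument goes through.
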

\begin{proof}
We prove it by an induction on the pinch number. 
Note that a torus knot with $P(p,q)=0$ is the unknot $O$, which satisfies $\Ord'(O)=0$. 

Assume that a torus knot $T_{p,q}$ with $P(p,q)=n$ has $\Ord'(T_{p,q})=n$. 
Consider a torus knot $T_{p,q}$ with $P(p,q)=n+1$. 
If $T_{p',q'}$ is obtained by the pinch move on $T_{p,q}$, $P(p',q')=n$, and hence $\Ord'(T_{p',q'})=n$ by the assumption. 
Thus, Lemma \ref{prop_unorihomology_pinch} implies $\Ord'(T_{p,q})=n+1$, and we are done.
\end{proof}

\begin{proof}[Proof of Theorem \ref{thm_main}]
By Proposition \ref{prop_GM} and the definition, we obtain $\Ord'(T_{p,q})\le \u^u_b(T_{p,q})\le P(p,q)$. 
Proposition \ref{prop_Ord_P} implies $\u^u_b(T_{p,q})=P(p,q)$. 
Hence, we have the conclusion by Proposition \ref{prop_pinch_formula}.
\end{proof}


\end{document}